\newcommand{\tabincell}[2]{\begin{tabular}{@{}#1@{}}#2\end{tabular}}
\newtheorem{prop}{Proposition}
\newtheorem{defi}{Definition}
\DeclarePairedDelimiter{\abs}{\lvert}{\rvert}
\begin{document}\sloppy

\title{Rhythmic Control of Automated Traffic - Part II: Grid Network Rhythm and Online Routing}
\author{Xi Lin\textsuperscript{a}\hspace{2em} Meng Li\textsuperscript{a}\hspace{2em} Zuo-jun Max Shen\textsuperscript{b,c}\hspace{2em} Yafeng Yin\textsuperscript{d,e}\hspace{2em} Fang He\textsuperscript{f}\footnote{Corresponding author. E-mail address: \textcolor{blue}{fanghe@tsinghua.edu.cn}.}}
\affil{\small\emph{\textsuperscript{a}Department of Civil Engineering, Tsinghua University, Beijing 100084, P.R. China}\normalsize}
\affil{\small\emph{\textsuperscript{b}Department of Industrial Engineering and Operations Research, University of California Berkeley, Berkeley, CA 94720, United States}\normalsize}
\affil{\small\emph{\textsuperscript{c}Department of Civil and Environmental Engineering, University of California Berkeley, Berkeley, CA 94720, United States}\normalsize}
\affil{\small\emph{\textsuperscript{d}Department of Civil and Environmental Engineering, University of Michigan, Ann Arbor, MI 48109-2125, USA}\normalsize}
\affil{\small\emph{\textsuperscript{e}Department of Industrial and Operations Engineering, University of Michigan, Ann Arbor, MI 48109-2125, USA}\normalsize}
\affil{\small\emph{\textsuperscript{f}Department of Industrial Engineering, Tsinghua University, Beijing 100084, P.R. China}\normalsize}
\date{\today}
\maketitle

\begin{abstract}
\noindent Connected and automated vehicle (CAV) technology is providing urban transportation managers tremendous opportunities for better operation of urban mobility systems. However, there are significant challenges in real-time implementation, as the computational time of the corresponding operations optimization model increases exponentially with increasing vehicle numbers. Following the companion paper (Chen et al., 2020) which proposes a novel automated traffic control scheme for isolated intersections, this study proposes a network-level real-time traffic control framework for CAVs on grid networks. The proposed framework integrates a rhythmic control (RC) method with an online routing algorithm to realize collision-free control of all CAVs on a network and achieve superior performance in average vehicle delay, network traffic throughput, and computational scalability. Specifically, we construct a preset network rhythm that all CAVs can follow to move on the network and avoid collisions at all intersections. Based on the network rhythm, we then formulate online routing for the CAVs as a mixed integer linear program, which optimizes the entry times of CAVs at all entrances of the network and their time-space routings in real time. We provide a sufficient condition that the linear-programming relaxation of the online routing model yields an optimal integer solution. Extensive numerical tests are conducted to show the performance of the proposed operations management framework under various scenarios. It is illustrated that the framework is capable of achieving negligible delays and increased network throughput. Furthermore, the computational time results are also promising. The CPU time for solving a collision-free control optimization problem with 2,000 vehicles is only 0.3 s on an ordinary personal computer. \par
\hfill\break%
\noindent\textit{Keywords}: Rhythmic control; Connected and automated vehicles; Online routing; Computational scalability
\end{abstract}

\section{Introduction} \label{intro_sec}

Disruptive technologies always lead to rapid and significant revolutions in urban mobility systems (Qi and Shen, 2019). Recently, the substantial integration of advances in mobile internet and smartphone technologies into transportation systems has led to the emergence of real-time ride-sharing mobility services, substantially revolutionizing the taxi industry, and laying the foundation for a more connected and centrally-controlled mobility service operations system (see, e.g., Zha et al., 2016; Bertsimas et al., 2019). The next major wave of technology innovation seeks to utilize vehicle-based innovation to impact the urban mobility system in a profound, disruptive, and far-reaching way (Mahmassani, 2016). Automated vehicles, integrated with connected vehicle technologies, are capable of gathering information, autonomously performing all driving functions, and communicating with environments in a collaborative and real-time manner for an entire trip through an interconnected network of moving vehicular units and stationary infrastructure units (NHTSA, 2013). Consequently, the underlying operations mode of urban traffic could be significantly impacted, and automated transportation that eliminates human-driven vehicles altogether will enable robotic-like control of vehicular flow. As automated vehicles gradually become ubiquitous in the coming years, a centrally-controlled management system (CCMS) can be used to leverage the accuracy and consistency of such vehicle motion control. In particular, the CCMS can take responsibility for operating urban network traffic, and maximize network traffic efficiency through jointly optimizing traffic control and individual vehicle operations strategies in road networks and implementing coordinated strategies, such as speed harmonization and coordinated cruise control. In this regime, an urban network transportation system will eliminate unreliable and selfish human-driven environments entirely, and the operations management system will shift from isolated and uncoordinated control to being connected and centrally-controlled, indicating tremendous potential for enhancing urban mobility. \par

In a CCMS, one key focus is routing and controlling connected and automated vehicle (CAV) traffic on an urban road network, such that collisions are avoided at all intersections, and such that the network traffic throughput (vehicle delay) is significantly increased (decreased). This research question is straightforward, but quite challenging to solve. The decisions of routing need to be seamlessly integrated with vehicle management at intersections, because routing is directly affected by management strategies at intersections. Consequently, such integration requirements will inevitably lead to high computational complexity in the CCMS. Furthermore, urban traffic demands could be massive during peak hours. Therefore, the CCMS needs to satisfy a very time-constrained optimization schedule, with probably less than a few seconds to solve a network-level cooperative control problem involving the routing and intersection management of thousands of CAVs. \par

Previous studies have primarily investigated a single type of CAV control problems, such as automated management of an isolated intersection (Dresner and Stone, 2008; Levin et al., 2016;
Yu et al., 2018), centralized routing (Zhang and Nie, 2018; Li et al., 2018),, and vehicle scheduling (Muller et al., 2016; Soleimaniamiri and Li, 2019). Specifically, the companion paper (Chen et al., 2020) proposes an innovative control at isolated intersections with redesigned intersection layout. Few studies have considered the joint optimization of routing and intersection management strategies for CAV traffic in an urban road network, i.e., routing all of the CAVs in a road network to maximize traffic network throughput and minimize overall delay, while resolving inter-vehicle conflicts at intersections, which is particularly critical for CCMS performance. Moreover, the exact algorithms proposed for optimizing CAV control in a real-time manner rarely scale past a few dozens of vehicles. For instance, as reported by Levin and Rey (2017), the proposed mixed-integer linear program (MIP) that optimizes vehicle ordering at conflict points of an isolated intersection can only be solved in real-time for up to 30 vehicles. Lu et al. (2019) optimized multiple CAV trajectories while resolving inter-vehicle conflicts on a multi-lane single-link network. However, in some instances, the optimization model cannot be solved in real-time, even for only 10 vehicles. \par

This paper intends to fill this void by proposing a novel and real-time traffic control framework for CAVs on urban road networks, called \textit{rhythmic control (RC)}. It is named as "rhythmic" because the entire network traffic is organized into an regularly recurring and quickly-changing manner, which shares substantial similarity with the rhythm in music. As one of the first steps towards investigating network control of CAV traffic, this study focuses on grid networks. Similarly to many downtown areas in cities such as New York, Chicago, Barcelona, and Glasgow, we set the grid networks to be one-way, as the intersections of one-way streets have fewer conflict points and simpler streamlines than those of two-way streets. Thus, they have potential for improving traffic throughput and alleviating computational complexity in an automated control system. One drawback of a one-way street in a human-driven-vehicle environment is that many turning movements en route may increase the human’s driving burden. This, however, is not a concern for CAVs, as they can be programmed to remember and execute all maneuvers in a journey. Moreover, in Section \ref{network_sec}, we will further justify the utilization of a one-way network by analyzing its connectivity and detour ratio. The latter is defined as the ratio of shortest-path distance over Manhattan distance. In a one-way grid network, to realize real-time operations management of CAV traffic, we propose a two-level control framework. At the lower level, we adopt a RC method for resolving collisions at intersections. Specifically, we develop a pre-set \textit{network rhythm}, such that all of the virtual platoons can follow the rhythm to flow within the network without any stops or collisions. The virtual platoons on each vertical or horizontal road of the network may not necessarily consist of vehicles, but represent a spatiotemporal cell that is available for being occupied by vehicles. Then, a CAV can fulfill its trip and avoid collisions by repeating the process of joining a virtual platoon, moving forward in a virtual platoon, and leaving for joining another virtual platoon. In this case, the RC framework requires very modest computational efforts, but has great potential for reducing vehicle delays. This is because the network rhythm is pre-determined, and once a CAV starts to follow the rhythm and joins a virtual platoon, it will not incur stops. More importantly, based on the RC framework, the online routing problem for CAVs can be significantly simplified. At the upper level of the control framework, we formulate online routing for CAVs as a MIP problem, which optimizes the CAVs' entry time on the perimeter of the grid network and their time-space routings in real-time. Owing to the RC framework, the routing model does not need to optimize individual vehicle spatiotemporal trajectories or to explicitly include collision-free constraints, but rather can realize conflict-free routing for CAVs by only optimizing their choices in joining and leaving different virtual platoons. We further provide a sufficient condition that the linear-programming relaxation of the online routing model yields an optimal integer solution. As expected and to be demonstrated by numerical results, the proposed two-level control framework achieves superior performance on average vehicle delay and network traffic throughput, and makes conflict-free routing optimization formulations tractable at the largest practical scales, involving tens of thousands of CAVs per hour. \par

The remainder of this paper is organized as follows. Section \ref{network_sec} introduces the details of the one-way road network design, and analyzes the networks’ global accessibility and average detour ratio. Section \ref{RC_sec} presents the two-level rhythmic control framework. Section \ref{Extension} introduces some extensions of the RC framework to consider network rhythm length choice, imbalanced demands, temporary within-network waiting as well as heterogeneous block sizes. Section \ref{Nume_sec} reports the results of numerical experiments, and the last section concludes the paper with some suggestions for future research.

\section{One-way grid network} \label{network_sec}

To reduce the impacts of inter-vehicle conflicts on network traffic throughput and alleviate computational complexity, we set the grid networks to be one-way. Within each parallel group, the directions of the streets are arranged in an alternating fashion. Figure \ref{OWGC} shows a one-way grid network with four horizontal and vertical streets. As illustrated, besides the main streets, there are four elements on the network, i.e., entrances, exits, crossroads, and junctions, introduced as follows.\par

\noindent \textbf{Entrances}: These are the entrances to the one-way street network from outside roads. As shown in Figure \ref{Components}(a), at each entrance, there are extra waiting lanes for the automated vehicles, to allow a temporary delay; such waiting lanes are crucial for the further routings of incoming CAVs, discussed in detail in the latter section. \\
\textbf{Exits}: These are the exits from the one-way street network to the outside roads. No specific design is required for such areas. \\
\textbf{Crossroads}: These are the intersections between two perpendicular one-way streets. A detailed geometry layout is shown in Figure \ref{Components}(b). Turning lanes are arranged on both sides, to allow CAVs to turn from one street to its perpendicular street; we observe that turning lanes are extended to some distance away from the crossroads to ensure that turning vehicles can switch to these lanes in advance, then decelerate and finally accelerate. \\
\textbf{Junctions}: These are the connecting points between the road network and zones with other functionalities, e.g., parking facilities, pickup areas, buildings, and garages. One typical design is shown in Figure \ref{Components}(c). CAVs can both leave and enter the road network at junctions. Similarly to crossroads, there is an extra lane extended to some distance away to allow vehicles to decelerate or accelerate. Note that in reality there may not be a connecting road linking the network and in-building facilities, physically; it may only be a curbside lane where vehicles can cruise or park temporarily, and trip demands could be randomly generated at these spots.\par

\begin{figure}[!ht]
	\centering
	\includegraphics[width=0.8\textwidth]{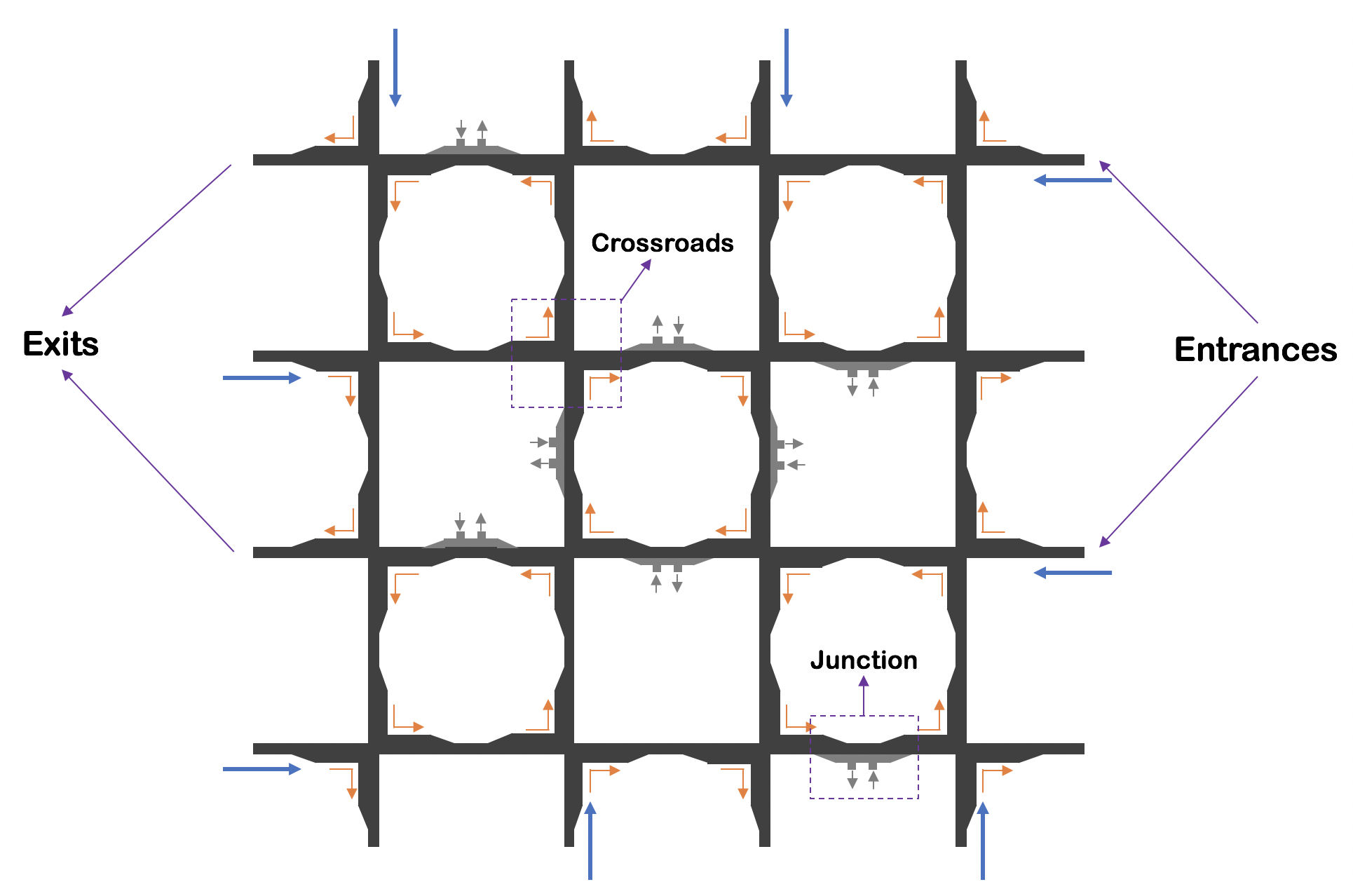}
	\caption[]{One-way grid network} 
	\label{OWGC}
\end{figure}

\begin{figure}[!ht]
	\centering
	\subfloat[][]{\includegraphics[width=0.7\textwidth]{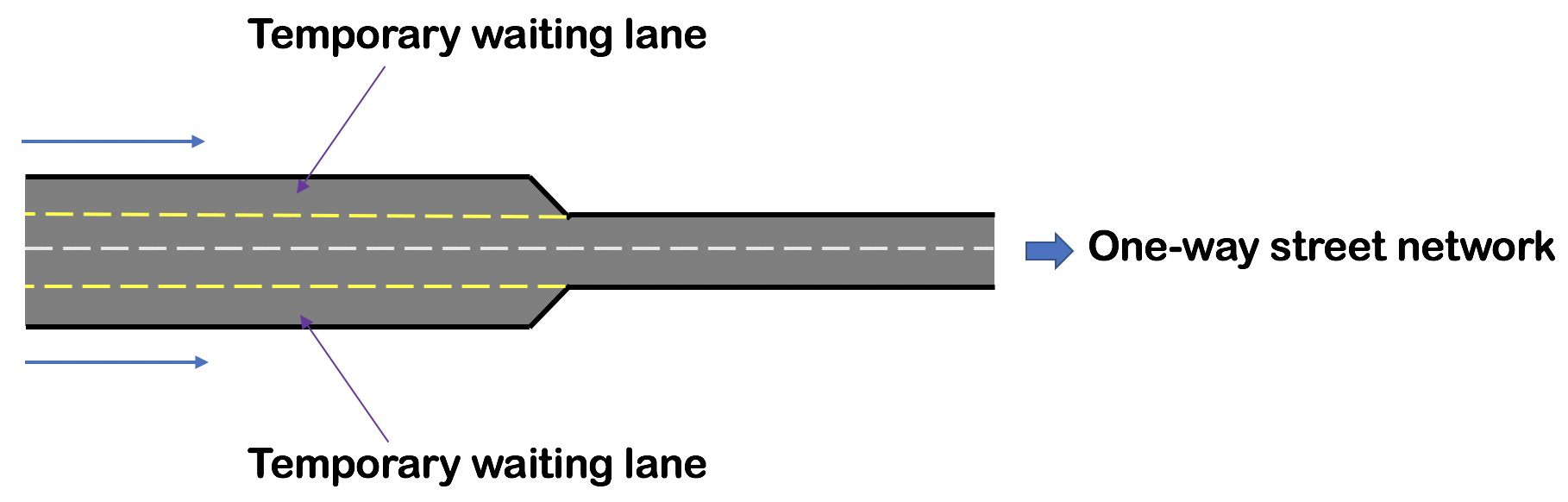}}\\
	\subfloat[][]{\includegraphics[width=0.55\textwidth]{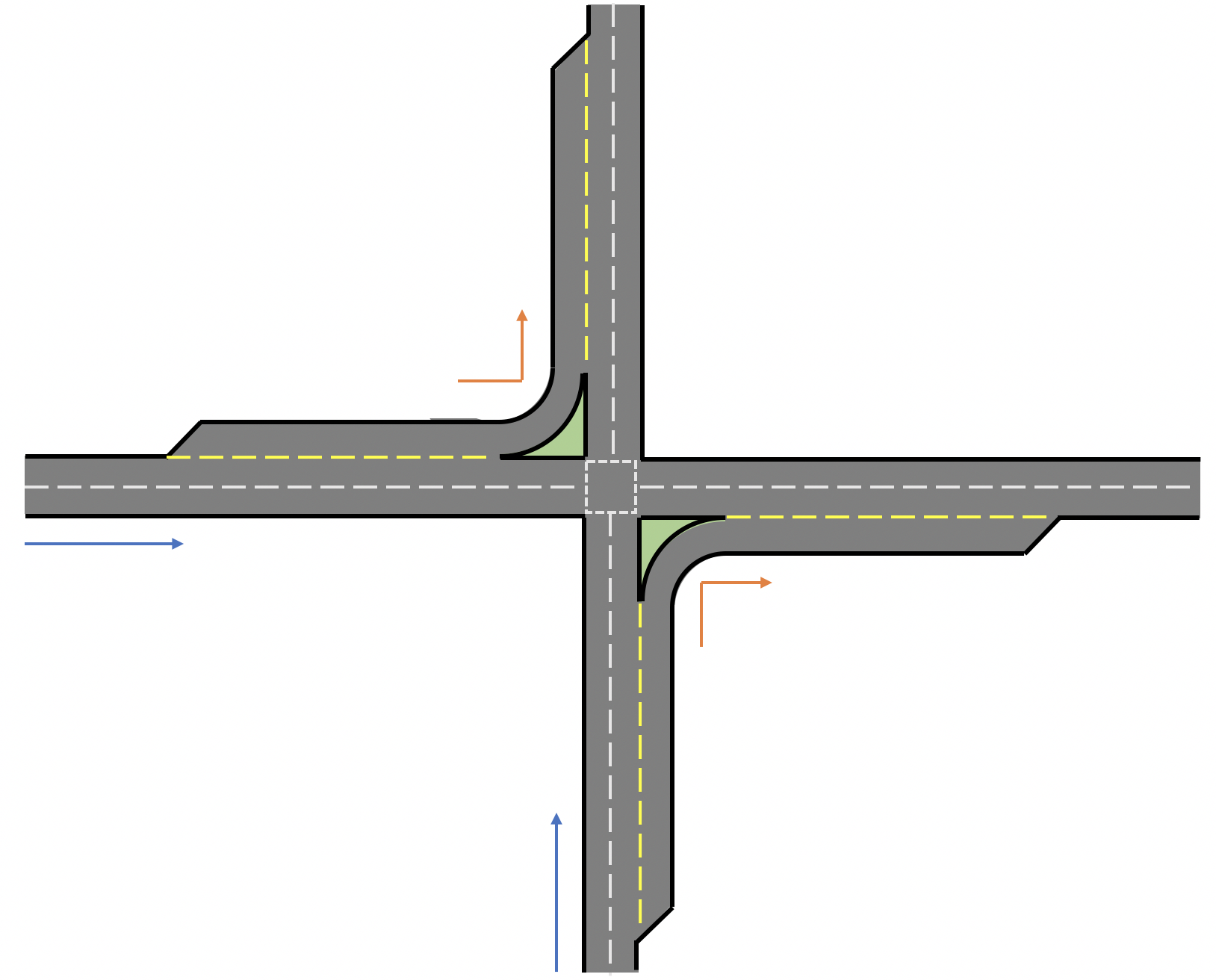}}\\
	\subfloat[][]{\includegraphics[width=0.7\textwidth]{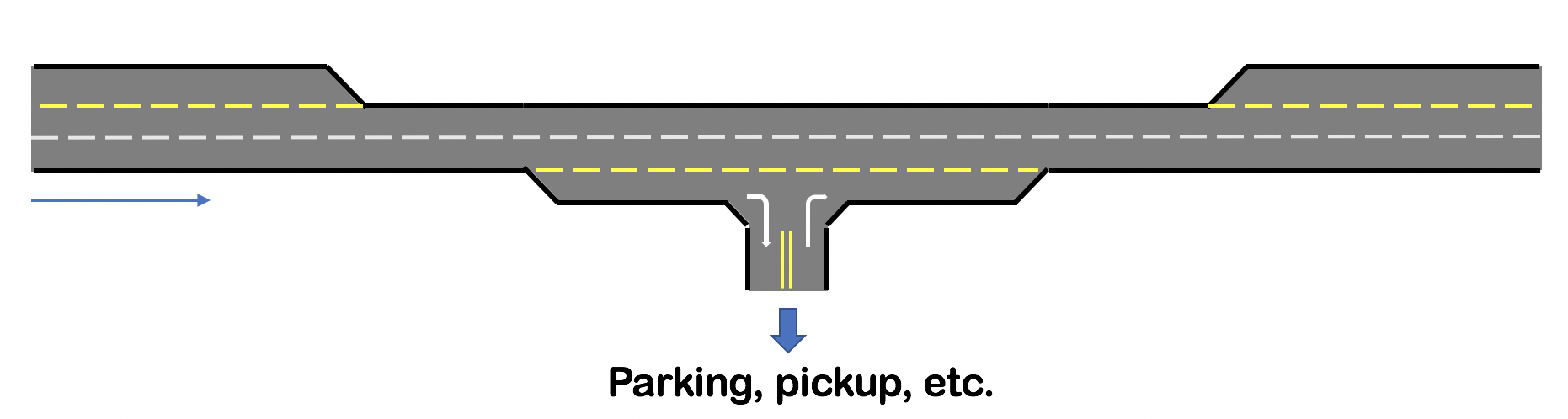}}
	\caption[]{Elements in one-way grid network: (a) Entrance (b) Crossroads (c) Junctions}
	\label{Components}
\end{figure}

Below, we prove a crucial property for the network, namely, global accessibility. We define an $m \times n$ network as a network with $m$ horizontal streets and $n$ vertical streets. The numbering rule for horizontal streets is from the bottom up, and that for vertical streets is from left to right.

\begin{prop}\label{prop1}
In an $m \times n$ one-way street network where $m, n$ are even numbers, a vehicle entering from an entrance or a junction can reach any exit or junction.
\end{prop}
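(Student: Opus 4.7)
The plan is to fix a direction convention, recast the claim as a reachability statement in a directed graph on crossroads, and then prove it with a ``turn-around'' lemma that exploits the parity alternation. Adopt the natural indexing---horizontal streets $h_{1},\dots,h_{m}$ from south to north and vertical streets $v_{1},\dots,v_{n}$ from west to east---and without loss of generality assume $h_{i}$ runs east for odd $i$ and west for even $i$, while $v_{j}$ runs north for odd $j$ and south for even $j$. Because $m$ and $n$ are even, every street then has at least one adjacent parallel street flowing in the opposite direction. Let $G$ be the directed graph whose vertices are the crossroads and whose arcs are the permissible single-block moves. Each entrance deposits a vehicle at the unique entry crossroad at the upstream end of its street, and each exit is accessed from the unique exit-precursor crossroad at the downstream end, so Proposition~\ref{prop1} reduces to showing that in $G$ every entry crossroad reaches every exit-precursor crossroad via a directed path.

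I would then classify the four corner crossroads under the alternation. Two of them, $(h_{1},v_{1})$ and $(h_{m},v_{n})$, are ``source corners'' with both adjacent streets feeding in and two internal out-arcs each; the other two, $(h_{1},v_{n})$ and $(h_{m},v_{1})$, are ``sink corners'' with both adjacent streets leaving the grid and two internal in-arcs each. Every entry crossroad is either a source corner or a boundary non-corner, and every exit-precursor crossroad is either a sink corner or a boundary non-corner. Hence the proposition will follow once I establish three items: (i) the induced subgraph on non-corner crossroads is strongly connected, (ii) each source corner reaches a non-corner in one arc, and (iii) each sink corner is reached from a non-corner in one arc. Items (ii) and (iii) are immediate from the arcs along the two incident streets of each corner.

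The core step is the turn-around lemma needed for (i): from any non-corner $c=(h_{i},v_{j})$, every existing grid-neighbour of $c$ is reachable in at most three arcs. Forward moves (along the flow of $h_{i}$ or $v_{j}$) are single arcs; a backward move, for instance from $(h_{i},v_{j})$ to $(h_{i},v_{j-1})$ against the flow of $h_{i}$, is effected by a $2\times 2$ detour: one arc perpendicular onto an adjacent parallel street $h_{i\pm 1}$ (which by alternation flows in the opposite direction to $h_{i}$), one arc along $h_{i\pm 1}$ to $(h_{i\pm 1},v_{j-1})$, and one arc along $v_{j-1}$ (whose flow is opposite to that of $v_{j}$) back to $(h_{i},v_{j-1})$. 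Evenness of $m$ and $n$ is precisely what guarantees that, for every non-corner, the needed adjacent parallel street and opposite-flow perpendicular street both lie inside the grid and carry the required flows. Chaining these neighbour-to-neighbour moves then delivers (i). The degenerate case $m=n=2$ has no non-corners and is handled separately by observing that each source corner already has a direct arc to each sink corner along its two incident streets.

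The main technical obstacle is the boundary version of the turn-around lemma, where $(h_{i},v_{j})$ lies on the outer row or column and one of the two natural $2\times 2$ detours is blocked because the adjacent row or column is outside the grid. I expect the resolution to come from a parity check: the combination of direction alternation with the evenness of $m,n$ forces the interior-side parallel and perpendicular streets to flow in exactly the required opposite directions, so the interior-side detour is always available. This is precisely where the evenness hypothesis is substantively used; once this case analysis is dispatched, the remainder of the argument is a routine concatenation of arcs.
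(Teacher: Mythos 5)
Your opening ``without loss of generality'' is where the argument breaks. The two alternating orientation classes of an $m\times n$ one-way grid are not equivalent: the paper stipulates that the bottom street runs rightward and the leftmost street runs downward, so that with $m,n$ even the four boundary streets form a single directed (counterclockwise) perimeter loop and every corner crossroad has one internal in-arc and one internal out-arc. Your convention (bottom east, left north) lies in the other class, and it is exactly the one that produces the ``source'' and ``sink'' corners you describe: at a sink corner such as $(h_1,v_n)$ both streets leave the network, so that crossroad has no internal out-arc at all. Under that orientation the proposition is in fact false once junctions are included: a vehicle entering at a junction on the last block of $v_n$ (between $h_2$ and $h_1$) is carried south into the sink corner and can only exit there, so it can never reach any junction or any exit other than the two at that corner; symmetrically, a junction on the first block out of a source corner is unreachable from most origins, since the source corner has no internal in-arcs. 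Your reduction of the statement to ``every entry crossroad reaches every exit-precursor crossroad'' silently drops junction origins and destinations, which is precisely where this failure hides, and in the degenerate $2\times 2$ case your separate treatment (source corner has a direct arc to each sink corner) likewise covers only entrance-to-exit trips, not junction-to-junction ones.

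By contrast, the paper's proof leans entirely on the stipulated orientation: because the perimeter is a directed loop, any vehicle first drives to the outer loop, circulates on it, and then leaves it for its destination street, which handles entrances, exits, and junctions uniformly. Your interior ``turn-around'' lemma (reaching a backward neighbour via a $2\times 2$ detour using the oppositely directed adjacent parallel street) is a sound and potentially useful ingredient, but to repair the proof you must first adopt the paper's direction convention (or its mirror), re-examine the corner classification---under the correct orientation there are no source/sink corners to classify---and state the reduction so that origins and destinations at junctions (mid-block points, including the case of two junctions on the same segment) are explicitly covered, e.g., by showing every crossroad reaches and is reached from the perimeter loop and that the loop reaches the upstream crossroad of every segment.
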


\begin{proof}
See Appendix A.
\end{proof}

One concern in adopting a one-way network is that it may lead to extra detours of vehicles. To resolve it, we establish a rather meaningful result regarding the detour ratios of the aforementioned one-way grid network. The detour ratio is defined as the ratio of the actual driving distance (shortest driving distance in this study) to the corresponding Manhattan distance; high detour ratios could undermine the level-of-service of the network for travelers. We estimate the average detour ratios of a one-way grid network of size $m \times n$ by assuming that all origin-destination (O-D) pairs share the same travel demands, and the equation is given by:

\begin{align} \label{Detour}
& \bar{r} = \frac{\sum_{w \in \mathcal{W}} \hat{s}_w }{\sum_{w \in \mathcal{W}} s_w}
\end{align}

\noindent Here, $\bar{r}$ is the average detour ratio, $s_w$ is the Manhattan distance of O-D pair $w$, $\hat{s}_w$ is the shortest-path distance of O-D pair $w$, and $\mathcal{W}$ is the set of all O-D pairs. The results are shown in Table \ref{AveDR} below.

\begin{table}[!ht]
  \centering
  \caption{Average detour ratios of one-way grid networks with different sizes}
  \label{AveDR}
  \small
  \begin{tabular}{ l | c  c  c  c  c  c  c  c }
   \hline
   $m \backslash n$ & \tabincell{c}{2} & 4 & 6 & 8 & 10 & 12 & 14 & 16 \\
		\hline
		2	& 1.668 & 1.528 & 1.423 & 1.353 & 1.303 & 1.266 & 1.238 & 1.213 \\
		4   & 1.528 & 1.371 & 1.296 & 1.251 & 1.221 & 1.198 & 1.181 & 1.166 \\
		6   & 1.423 & 1.296 & 1.237 & 1.202 & 1.179 & 1.163 & 1.150 & 1.140 \\
		8   & 1.353 & 1.251 & 1.202 & 1.172 & 1.152 & 1.139 & 1.129 & 1.121 \\
		10  & 1.303 & 1.221 & 1.179 & 1.152 & 1.135 & 1.122 & 1.114 & 1.111 \\
		12  & 1.266 & 1.198 & 1.163 & 1.139 & 1.122 & 1.110 & 1.102 & 1.096 \\
		14  & 1.238 & 1.181 & 1.150 & 1.129 & 1.114 & 1.102 & 1.093 & 1.087 \\
		16  & 1.213 & 1.166 & 1.140 & 1.121 & 1.111 & 1.096 & 1.087 & 1.081 \\
   \hline
  \end{tabular}
\end{table}

From Table \ref{AveDR}, we see that when the network size is large enough (e.g., $m, n \ge 10$ ), the average detour ratio is generally around 1.1, and the average detour ratio decreases with increased network size. Since the Manhattan distance is equivalent to the shortest-path distance for the two-way grid network, the numbers shown in Table \ref{AveDR} actually show the additional detour induced by the one-way network compared to its two-way counterpart. Therefore, Table \ref{AveDR} indicates that the detour brought by one-way structure is subtle, especially for large-scale networks.\par

Note that we propose the use of one-way street network to reduce the complexity of conflicts. In the literature, some scholars have suggested other forms of network lane configurations with fewer (or even zero) conflicts (e.g., Eichler et al., 2013; Boyles et al., 2014); similarly, they reduced the conflicts on a network at the cost of increased vehicle detours. As demonstrated by Eichler
et al. (2013), the vortex-based zero-conflict network design induces larger detours than one-way networks in general. For simplicity, this study only considers the use of one-way streets for supporting our traffic control framework where the conflict relation is simple enough and the induced additional detours are sufficiently mild.

\section{Rhythmic control framework} \label{RC_sec}

In this section, we develop a two-level RC framework to realize online routing for CAVs, while resolving inter-vehicle conflicts at intersections. Before elaborating on the framework details, we first show the following motivating example for the RC.

\subsection{Motivating example} \label{motivate_subsec}

Consider an intersection with one conflicting point comprised of two intersecting lanes, as shown in Figure \ref{Twolane_fig}(a). Leveraging the accuracy and consistency of CAVs’ motion control, we are capable of controlling vehicles to pass through the intersection in a regularly recurring sequence, such that vehicles of the two intersecting lanes pass through the intersection in an alternating and conflict-free manner at a constant speed without any stop, as shown in Figure \ref{Twolane_fig}(b). We henceforth refer to this regularly recurring sequence as a \textit{rhythm}.

\begin{figure}[!ht]
	\centering
	\subfloat[][]{\includegraphics[width=0.4\textwidth]{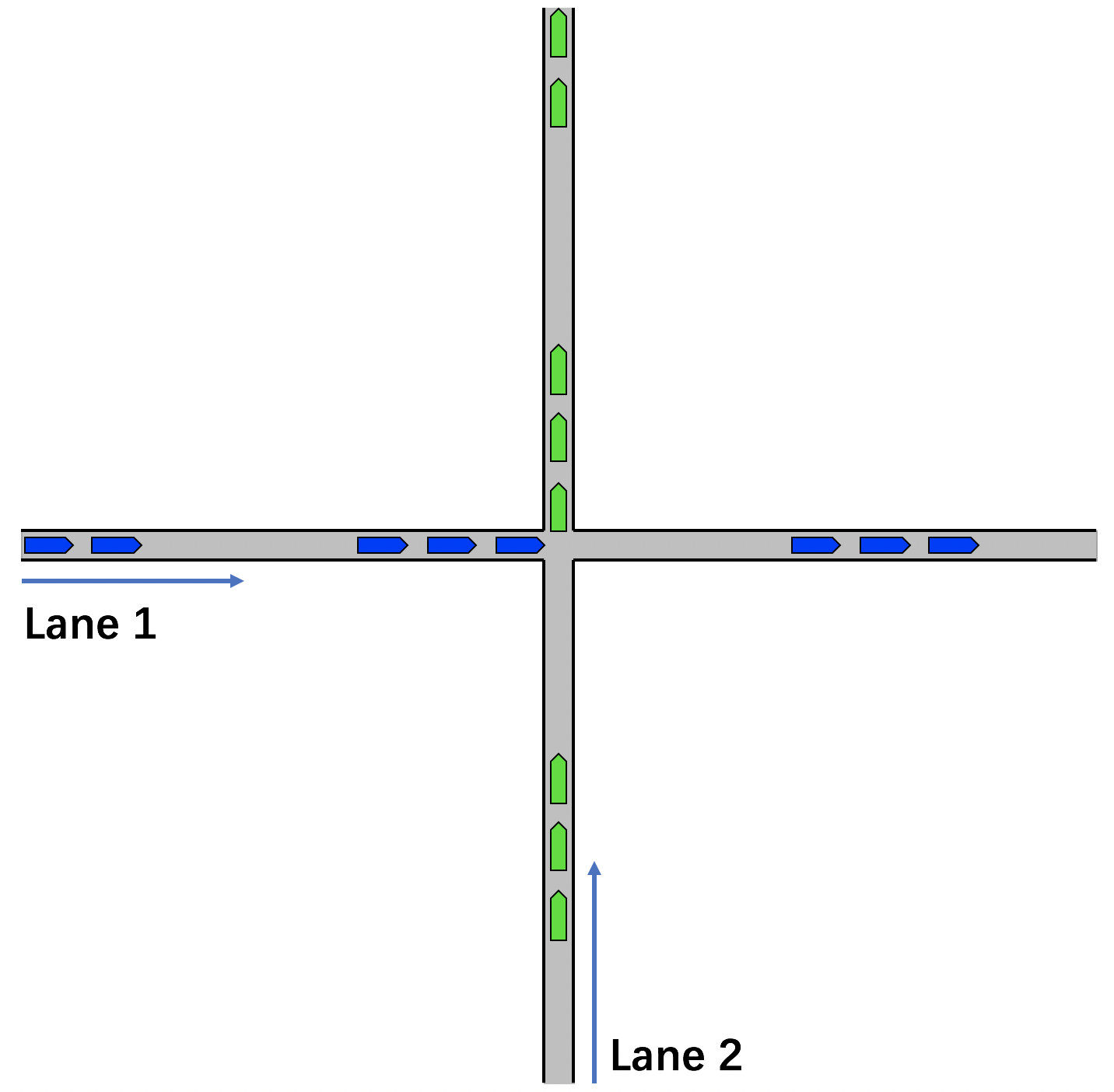}} \hspace{1em}
	\subfloat[][]{\includegraphics[width=0.4\textwidth]{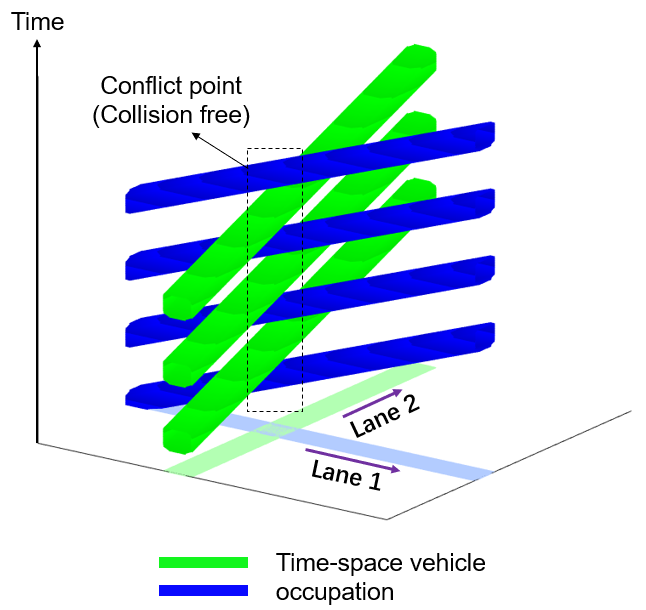}}
	\caption[]{Two-lane intersection. (a) Intersection layout. (b) Time-space vehicle occupations. } 
	\label{Twolane_fig}
\end{figure}

Inspired by this example, we propose an RC framework that enables CAVs to flow within a grid network at a constant speed without any stop by letting them follow a preset and coordinated rhythm. Consider two terminologies repeatedly utilized in this study. A \textit{virtual platoon} denotes a reserved time-space occupation with a predetermined trajectory along an arterial. The platoon is “virtual” because, in practice, it is possible that no vehicle is contained therein, especially in low-demand scenarios. For a one-way grid network, a \textit{network rhythm} represents a regularly recurring sequence of virtual platoons on arterials entering the network, such that all of the virtual platoons can follow the rhythm to flow within the network without any stops or collisions. Under a network rhythm, on each arterial, the time between two virtual platoons consecutively formed and allowed to enter the network is denoted by $\hat{t}$. The general concept of the proposed RC framework is described as follows. \par

\noindent i) Virtual platoons are admitted to enter the network by following a network rhythm. \\
ii) A CCMS solves an online routing problem to determine when a CAV can enter the network and which virtual platoons it will successively join and leave to fulfill its trip.

\subsection{Network rhythm} \label{rhythm_subsec}

The RC framework must prevent vehicles arriving from different directions from colliding at crossroads. Our proposed RC framework adopts a network rhythm to achieve this, i.e., by designing pre-set entry times for newly-formed virtual platoons of different roads within each interval $\hat{t}$, rather than relying on dynamically controlling the movements of vehicles/platoons to avoid conflicts in their space-time trajectories, as this latter method requires complicated computational efforts and possibly massive amounts of accelerations/decelerations. Consequently, all virtual platoons on the network will be perfectly coordinated, i.e., they will proceed on the network with a constant speed in a conflict-free manner. \par

We first propose a rigorous conflict-free statement. In what follows, we use $l_p$ to denote the maximum length of a virtual platoon, $v$ to denote the constant platoon speed, and $t_c$ to denote the travel time required for a virtual platoon to pass through a segment between two consecutive crossroads. Proposition \ref{prop2} below provides the details of the statement.

\begin{prop}\label{prop2}
On an $m \times n$ one-way street network where $m, n$ are even numbers, as long as $\frac{l_p}{v} \le \frac{1}{2}\hat{t}$ and $t_c = a \hat{t}, a \in \mathbb{N}^+$, we can set the entry times of virtual platoons on horizontal streets to be $k\hat{t}, k \in \mathbb{N}$ and on vertical streets to be $(k+\frac{1}{2})\hat{t}, k \in \mathbb{N}$ to avoid collisions at crossroads.
\end{prop}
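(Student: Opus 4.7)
The plan is to verify that, under the prescribed schedule, the horizontal and vertical virtual platoons are time-separated at every crossroad by at least $\tfrac{1}{2}\hat{t}$, which by hypothesis is at least the time $l_p/v$ that a single platoon needs to clear an intersection. The argument is essentially a parity (mod $\hat{t}$) calculation, propagated along each street.

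First, I would track the arrival time of an arbitrary virtual platoon at an arbitrary crossroad along its route. A horizontal platoon that enters the network at time $k\hat{t}$ traverses each inter-crossroad segment in time $t_c = a\hat{t}$, so its arrival at the $j$-th crossroad on its path is $(k + ja)\hat{t}$, still an integer multiple of $\hat{t}$ (up to a common offset from the entrance to the first crossroad, which can be absorbed into the origin of time). By symmetry, a vertical platoon entering at $(k+\tfrac{1}{2})\hat{t}$ arrives at each crossroad along its route at a time of the form $(k + ja + \tfrac{1}{2})\hat{t}$. Thus, modulo $\hat{t}$, horizontal arrivals are pinned to $0$ and vertical arrivals are pinned to $\tfrac{1}{2}\hat{t}$, uniformly across the grid.

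Second, I would use this to bound the minimum gap at any crossroad. For any integers $p, q$,
\begin{equation*}
\bigl| p\hat{t} - (q + \tfrac{1}{2})\hat{t} \bigr| = \bigl| p - q - \tfrac{1}{2} \bigr|\,\hat{t} \ge \tfrac{1}{2}\hat{t},
\end{equation*}
so any horizontal arrival and any vertical arrival at the same crossroad are separated by at least $\tfrac{1}{2}\hat{t}$. A platoon of length $l_p$ at speed $v$ occupies the crossroad for exactly $l_p/v$ time units; the assumption $l_p/v \le \tfrac{1}{2}\hat{t}$ then forces the occupation intervals $[t_h, t_h + l_p/v]$ and $[t_v, t_v + l_p/v]$ to be disjoint, ruling out collisions. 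The argument applies uniformly at every crossroad because the mod-$\hat{t}$ structure is preserved by inter-crossroad propagation.

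The main obstacle is not mathematical but notational: pinning down what "entry time" means consistently across streets, and choosing a common time origin so that horizontal and vertical offsets really do interleave as $k\hat{t}$ versus $(k+\tfrac{1}{2})\hat{t}$ at the \emph{crossroads} (not merely at the network perimeter). Once that bookkeeping is done and the hypothesis $t_c = a\hat{t}$ is used to propagate the parity, the core step is the one-line separation estimate above. The evenness of $m$ and $n$, although assumed in the statement, is needed upstream (for global accessibility via Proposition~\ref{prop1}) and plays no role in the collision-avoidance calculation itself.
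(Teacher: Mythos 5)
Your proposal is correct and follows essentially the same route as the paper's proof: compute arrival times at an arbitrary crossroad, use $t_c = a\hat{t}$ to show horizontal and vertical arrivals are pinned to $0$ and $\tfrac{1}{2}\hat{t}$ modulo $\hat{t}$ (the paper carries the common entrance-to-first-crossroad offset $t_e$ explicitly rather than absorbing it into the time origin), and conclude from $l_p/v \le \tfrac{1}{2}\hat{t}$ that occupation intervals cannot overlap. Your remark that the evenness of $m,n$ is not used in this calculation is also consistent with the paper's argument.
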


\begin{proof}
We denote the time required for a virtual platoon to drive from an entrance to the first crossroads as $t_e$. For an arbitrary crossroads, a virtual platoon on a horizontal street entering at $k_h\hat{t}$ will arrive at the crossroads at $k_h\hat{t} + t_e + c_h t_c = (k_h + a c_h)\hat{t} + t_e$, and a virtual platoon on the vertical street entering at $(k_v + \frac{1}{2})\hat{t}$ will arrive at the crossroads at $(k_v + \frac{1}{2})\hat{t} + t_e + c_v t_c = (k_v + \frac{1}{2} + a c_v)\hat{t} + t_e$; in that regard, $c_h$ and $c_v$ are the number of road segments between two consecutive crossroads that a vehicle on the horizontal/vertical street (respectively) must pass through before reaching a target crossroads after the first crossroads on the street; $c_h, c_v \in \mathbb{N}$. The arrival time difference between two platoons from different directions is computed by $\Delta t = [k_h - k_v - \frac{1}{2} + a (c_h - c_v)] \hat{t} = (k' - \frac{1}{2})\hat{t}$, where $k' = k_h - k_v + a (c_h - c_v) \in \mathbb{Z}$; thus we know $\abs{\Delta t} \ge \frac{1}{2}\hat{t}$. As $\frac{l_p}{v} \le \frac{1}{2}\hat{t}$, we then know that no platoons collide at the crossroads. Proof completes.
\end{proof}

From Proposition \ref{prop2}, we see that the keys for the successful coordination of virtual platoon movements are: i) preset entry times of virtual platoons at entrances; ii) $t_c = a \hat{t}, a \in \mathbb{N}^+$. As $t_c$ is determined by the speed of the virtual platoons, the second condition requires a synergy between the virtual platoon speed and the network rhythm. To provide an example, assuming that the distance between two consecutive crossroads is 150 m and that the platoon speed is 15 m/s, then $t_c = 10$ s. and we can therefore set $\hat{t}$ to be 10 s, 5 s, $\frac{10}{3}$ s, etc. By the proof of Proposition \ref{prop2}, it is clear that virtual platoons from different directions pass through crossroads in alternating fashion, i.e., just after a virtual platoon passes through the crossroads, another platoon arrives from the perpendicular street. A network rhythm is pre-determined, and once a CAV starts to follow the network rhythm and joins a virtual platoon, it will not incur stops. In this case, a network rhythm requires very modest computational efforts, but has great potential for reducing vehicle delays. Figures \ref{Fourlane_fig}(a)-\ref{Fourlane_fig}(b) below extend the two-lane intersection in Figure \ref{Twolane_fig} into a four-lane network to demonstrate the concept of network rhythm. We note that the concept of network rhythm shares some similarities with the network-level signal coordination (e.g., Gartner et al., 1975; Yan et al., 2019); for example, the length of a rhythm can be treated as an analog to the common control cycle through the network, and the virtual platoons can be seen as the green-wave bands from a moving perspective. Proposition \ref{prop2} indicates that there exists an offset design that can "perfectly" coordinate all intersections without any displacement between consecutive green-wave bands. Nevertheless, utilizing the controllability of CAVs, we can do more than just setting the offsets and letting the vehicles follow the traffic lights (like the traditional signal control); the following will demonstrate how to organize all vehicle movements into those virtual platoons for maintaining safety and efficiency at the same time.

\begin{figure}[!ht]
	\centering
	\subfloat[][]{\includegraphics[width=0.4\textwidth]{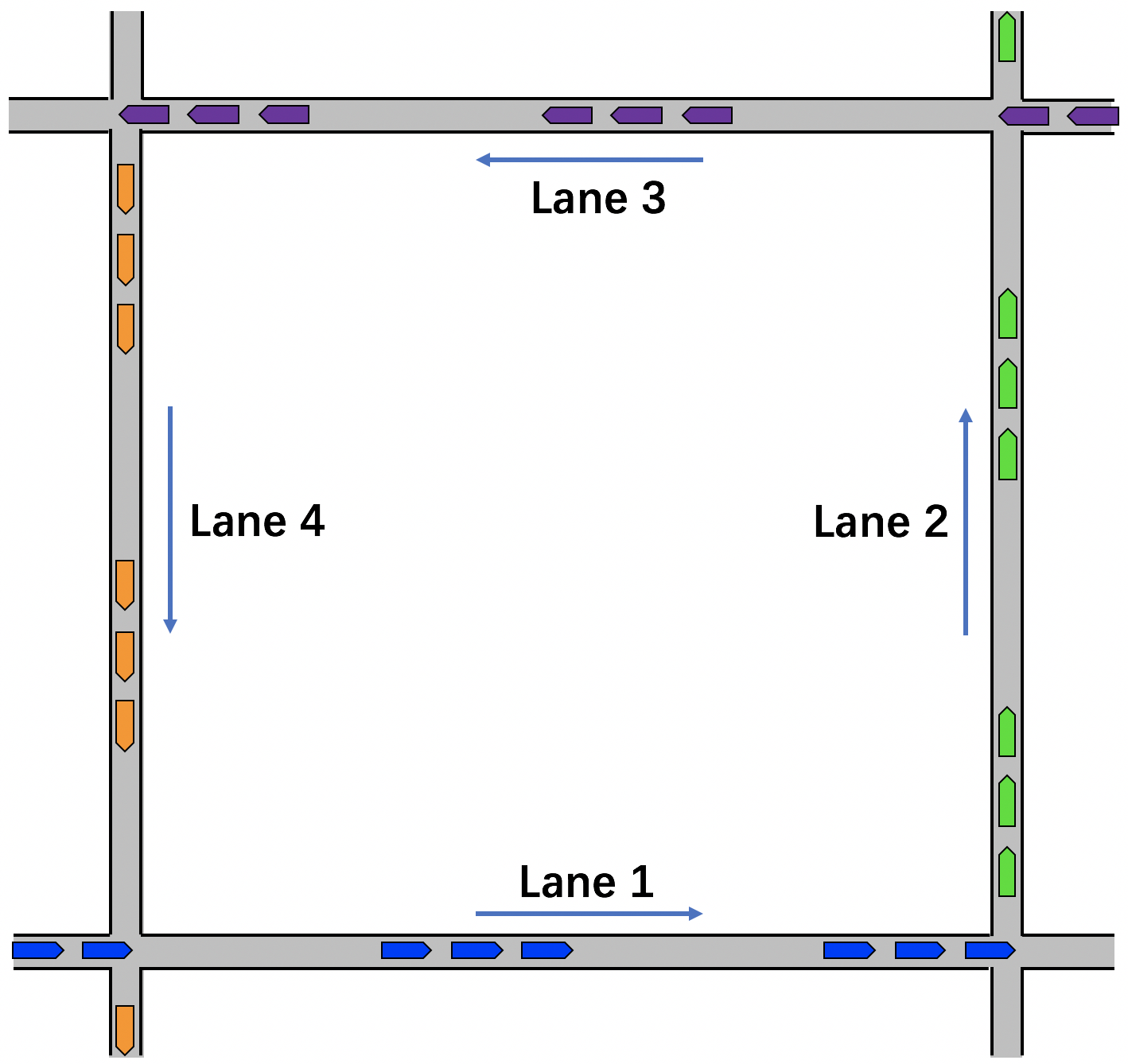}} \hspace{1em}
	\subfloat[][]{\includegraphics[width=0.4\textwidth]{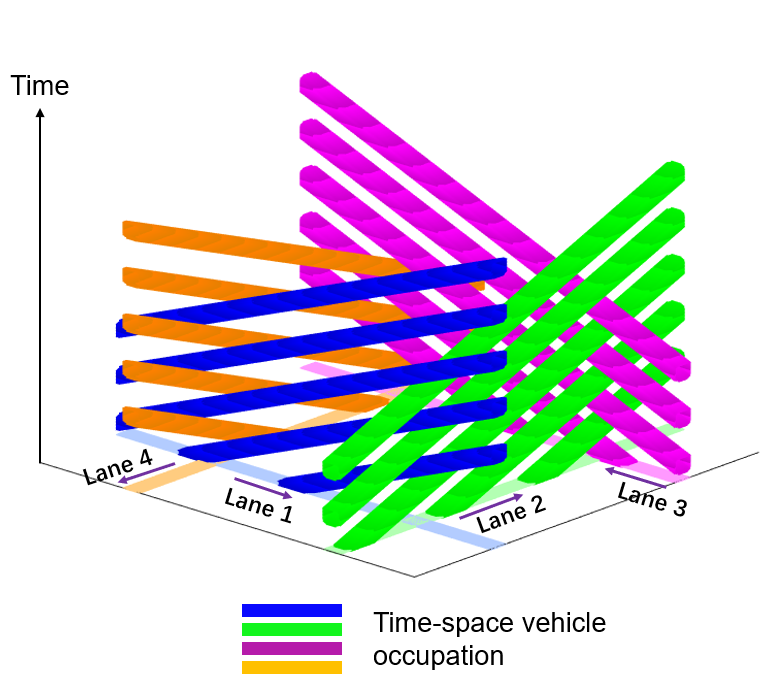}}
	\caption[]{Four-lane network. (a) Network layout. (b) Time-space vehicle occupations. } 
	\label{Fourlane_fig}
\end{figure}
\par

On a one-way grid network, and given the network rhythm $\hat{t}$, the lower-level RC framework works as follows. Within each interval of $\hat{t}$, each incoming vehicle at an entrance receives an order from the CCMS to be platooned with nearby vehicles, or to wait in the temporary waiting lane for the next platooning order. Once a virtual platoon is formed at an entrance of the network, it moves forward to the exit on the same street. Within its presence on the network, there could be new vehicles joining the platoon in succession, either from junctions or other perpendicular streets. Similarly, vehicles could also leave the platoon for other junctions or perpendicular streets. The number of vehicles in one virtual platoon has a uniform upper bound $\bar{N}_p$, and the online routing protocol, introduced in the latter section, will ensure that the vehicle number in each virtual platoon is always within the upper bound. The vehicles with turning movements in their routes will switch from one platoon to another until their trips are finished. Figures \ref{NetRhy}(a)-\ref{NetRhy}(b) graphically illustrate the virtual platoons and vehicle movements under the proposed RC, in which Figure \ref{NetRhy}(a) demonstrates two consecutive rhythms with the newly entered virtual platoons labelled, and \ref{NetRhy}(b) shows one possible vehicle movement of passing through two virtual platoons (both labelled in the pictures).

\begin{figure}[!ht]
	\centering
	\subfloat[][]{\includegraphics[width=0.8\textwidth]{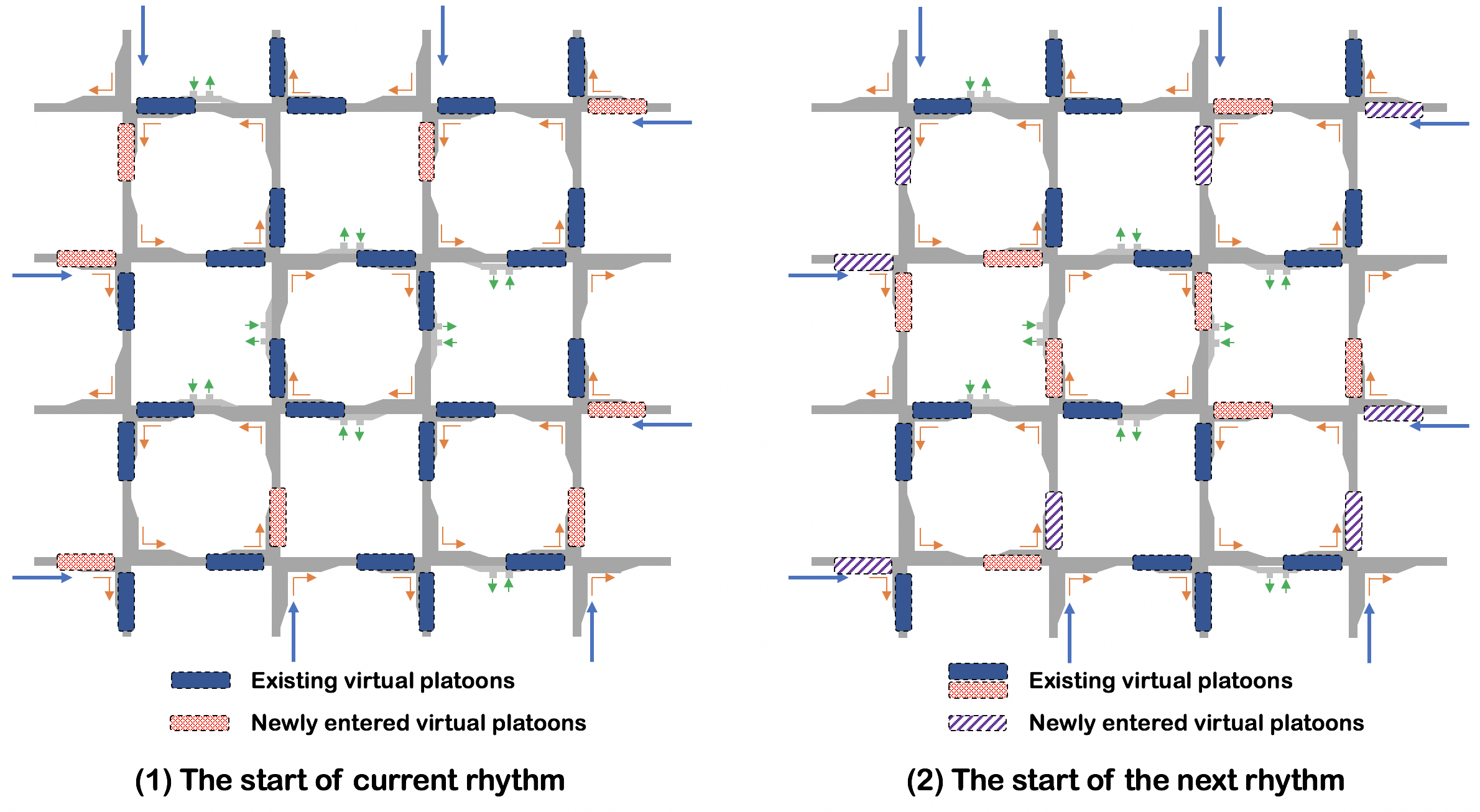}}\\
	\subfloat[][]{\includegraphics[width=0.8\textwidth]{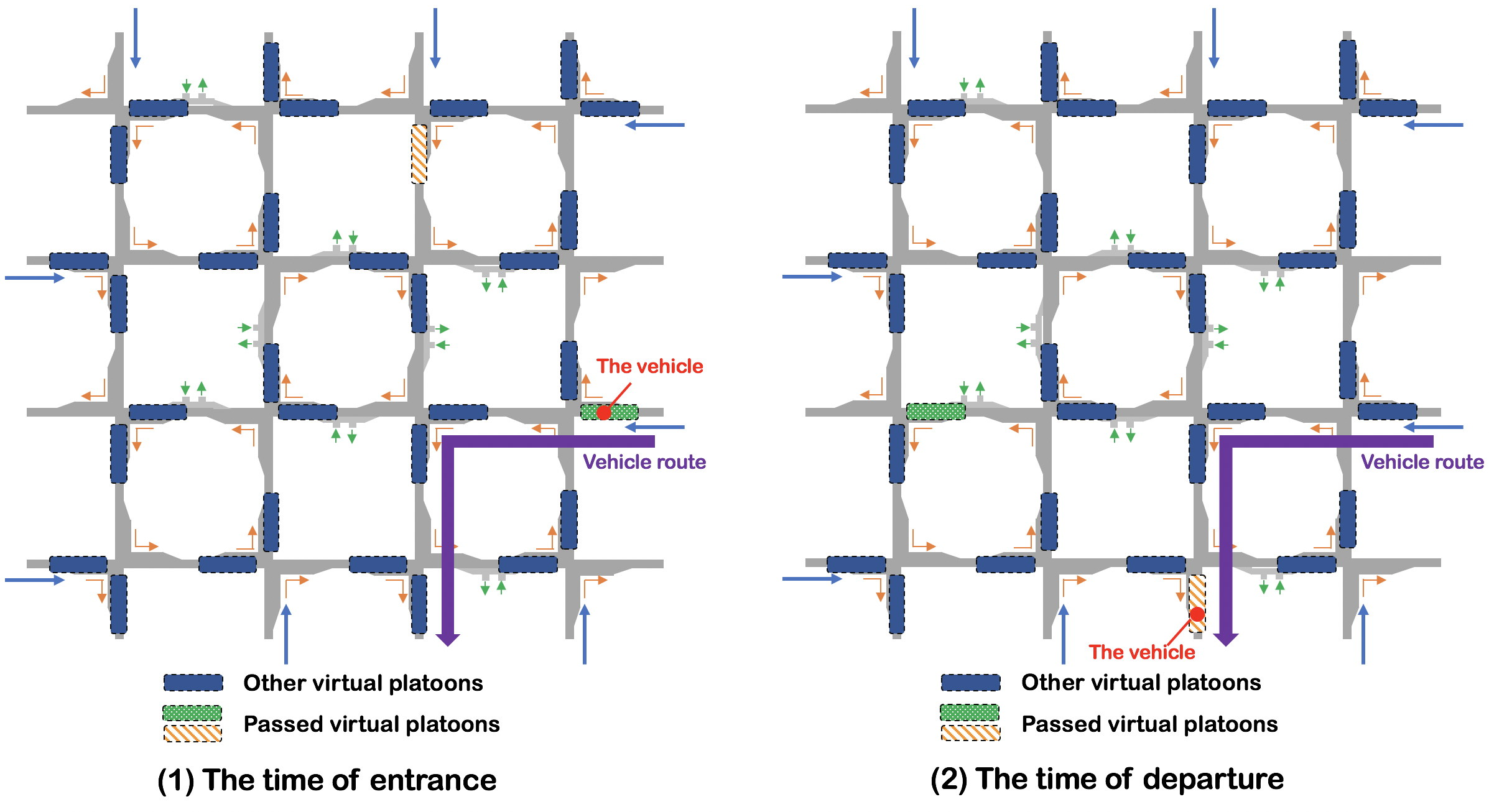}}
	\caption[]{Lower-level rhythmic control. (a) Network rhythms and virtual platoons. (b) Vehicle movements. } 
	\label{NetRhy}
\end{figure}

\begin{figure}[!ht]
	\centering
	\subfloat[][]{\includegraphics[width=0.45\textwidth]{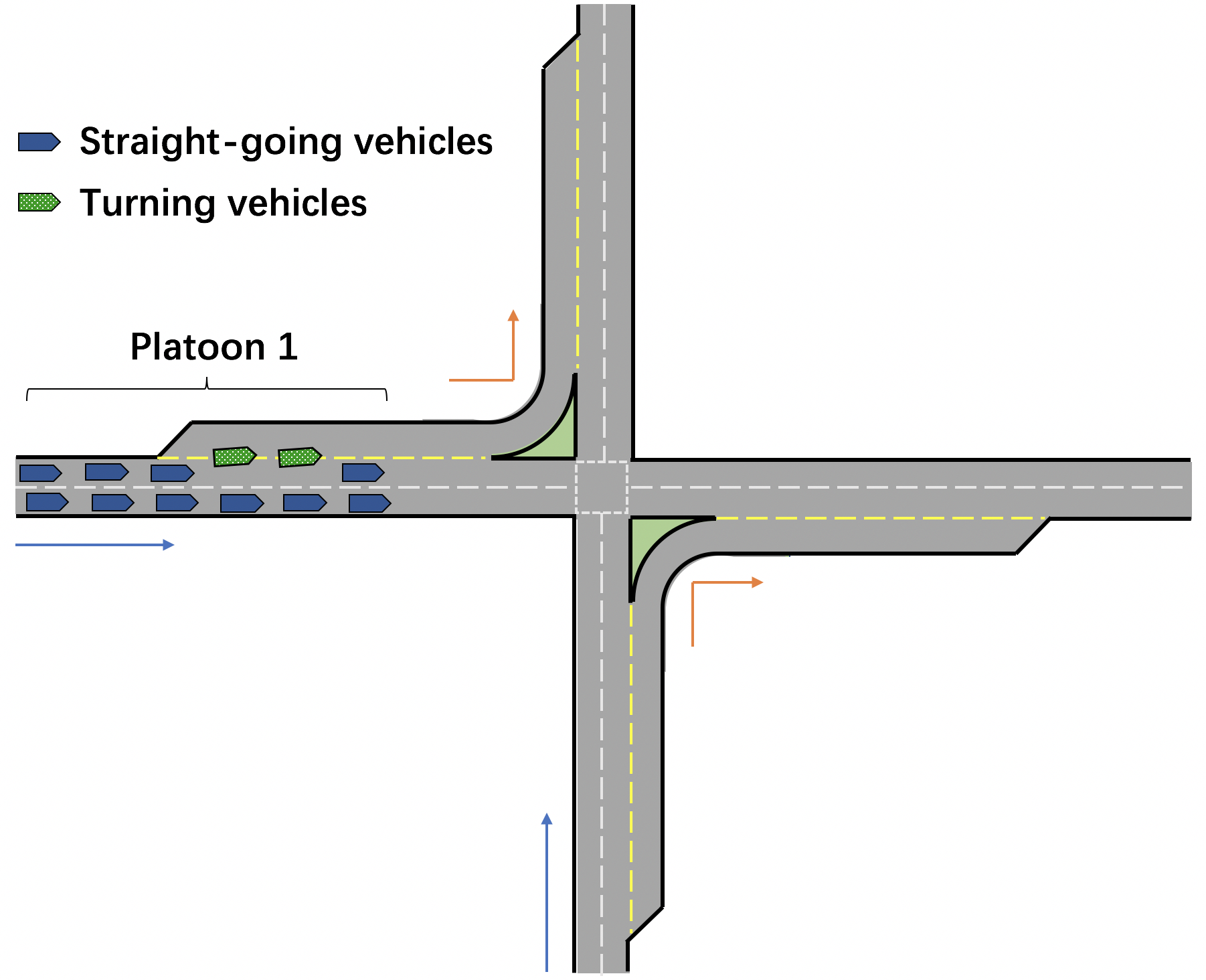}}\\
	\subfloat[][]{\includegraphics[width=0.45\textwidth]{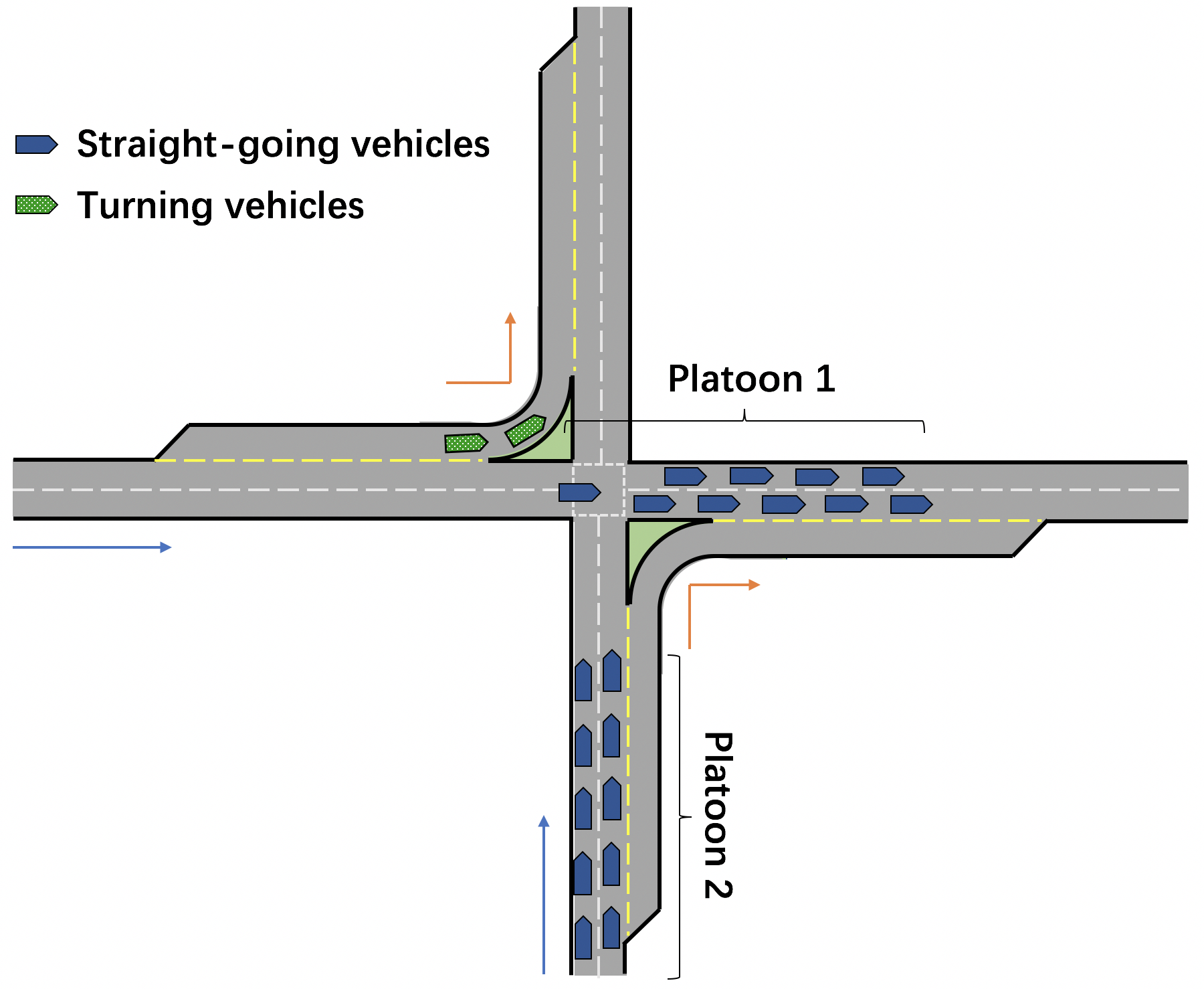}}\\
	\subfloat[][]{\includegraphics[width=0.45\textwidth]{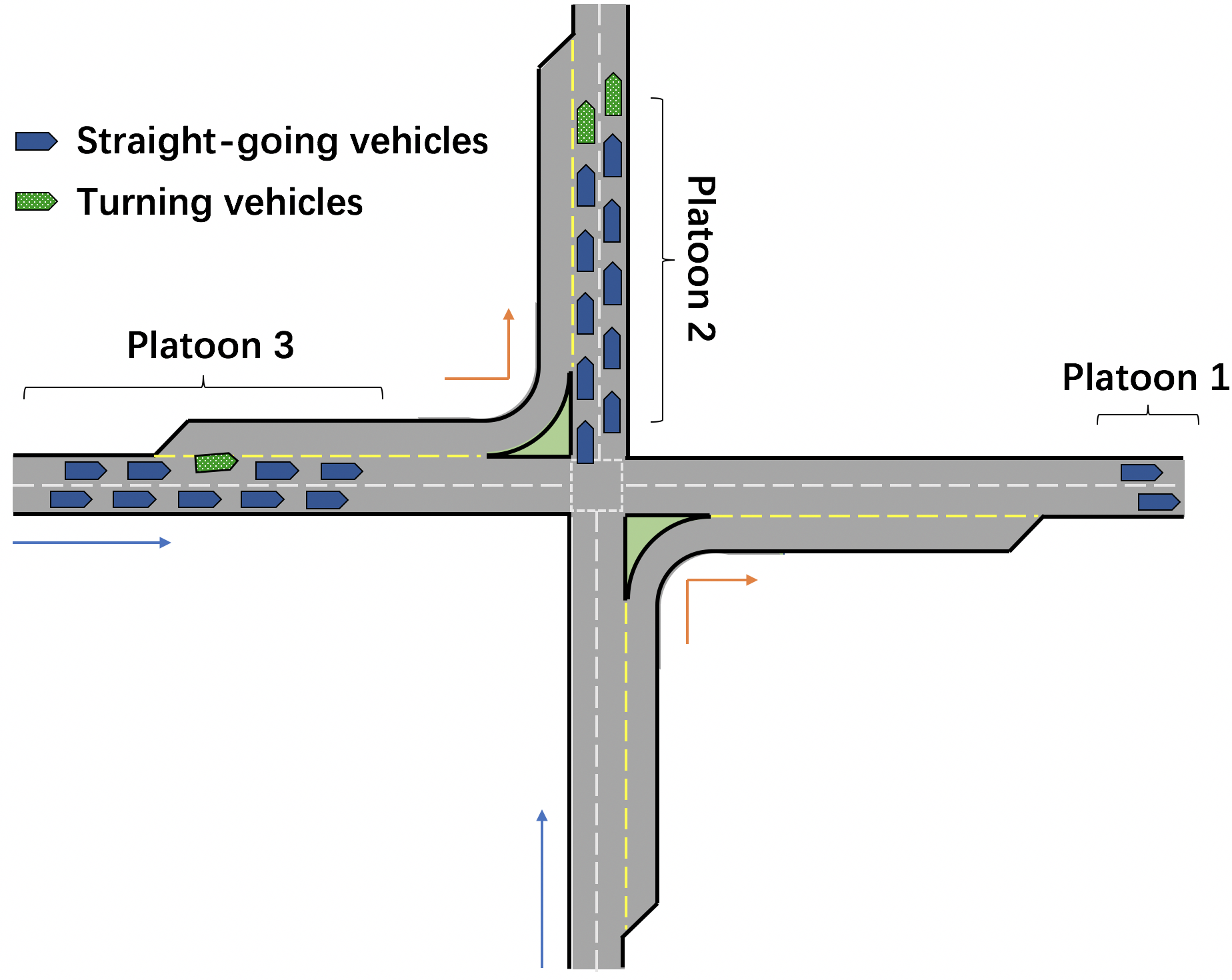}}
	\caption[]{Process of turning at crossroads. (a) Leaving the current platoon. (b) Turning. (c) Joining the next platoon. } 
	\label{Turning}
\end{figure}

Next, we elaborate on the turning processes of CAVs at the crossroads of a one-way grid network. As shown in Figure \ref{Turning}, vehicles preparing to turn at a crossroads first leave their current virtual platoon for the turning lane (Figure \ref{Turning}(a)), then decelerate to pass through the turning lane (Figure \ref{Turning}(b)), and finally accelerate after turning to join the next virtual platoon on the target street (Figure \ref{Turning}(c)). As the virtual platoons from different streets arrive at the crossroads in an alternating fashion, there is a time difference $\hat{t}$ between two consecutive platoons, which allows the turning vehicles to decelerate and then accelerate. \par

For the turning movements, one additional point worth mentioning regards mandatory lane changing, which could happen when the number of lanes on a street is larger than one. For example, in a two-lane case as in Figure \ref{Turning}, if a vehicle in virtual platoon 1 driving on the right lane plans to turn left at the crossroads, then a mandatory lane changing to the left lane is required before reaching the crossroads. As vehicles may constantly join or leave the platoon on the network, and because different vehicles may have different target turning directions, a control module is needed to dynamically arrange the lane changing maneuvers to satisfy requests from all vehicles. Such microscopic dynamic maneuvers can be integrated into a platoon control module. As this study focuses on the macroscopic design of a control framework that facilitates conflict-free online routing for CAVs, we leave the microscopic platoon control module to future investigation. 

\subsection{Online routing} \label{routing_subsec}

In a one-way grid network, and based on the pre-set network rhythm, the upper-level RC framework will conduct online routing for CAVs. In this section, we propose two methods for the online routing to minimize the average vehicle delay. As mentioned in the previous section, virtual platoons are formed one after another with interval $\hat{t}$ at entrances. Hence, in our proposed online routing model, we assume a routing interval equal to the network rhythm, and within each routing interval, all vehicles waiting at entrances and junctions will either be assigned a route or will be kept waiting for another routing interval in temporary waiting lanes (or in the waiting zone around junctions). Once assigned a route, the virtual platoons that a vehicle will join/leave are also determined. \par

To model an $m \times n$ one-way grid network where $m$ and $n$ are even numbers, we construct a directed graph $\mathcal{G}(\mathcal{N}, \mathcal{A})$ where $\mathcal{N}$ is the set of all nodes and $\mathcal{A}$ is the set of all links. A link $a \in \mathcal{A}$ represents a road connecting two crossroads or an entrance and a crossroad. Let $\mathcal{W}$ represent the set of all O-D pairs and $\mathcal{E}$ represent the set of all routing intervals. To simplify the notation without loss of generality, we assume that $t_c = \hat{t}$ when deriving the online routing model formulations. Then, based on Proposition \ref{prop2}, in the interval $e \in \mathcal{E}$, there is only one virtual platoon on each link of the network. Accordingly, we can denote a particular virtual platoon by a pair of interval number and link number $(a, e), a \in \mathcal{A}, e \in \mathcal{E}$; we also refer to $(a, e)$ as a temporal link. Under the proposed RC, CAVs will join different virtual platoons to complete their trips. Therefore, a CAV’s path can be defined as a collection of virtual platoons, and we let $\delta_{a,r}^{e,w}$ represent the virtual-platoon-path incidence, which equals one if path $r \in \mathcal{R}^w$ connecting O-D pair $w$ includes virtual platoon $(a, e)$, and zero otherwise. Because the network rhythm is fixed and each virtual platoon’s entry time and travel times are both fixed based on Proposition \ref{prop2}, for an routing interval $e$ lasting within $[t_e, t_e + \hat{t})$, one can go through the following steps to calculate $\delta_{a,r}^{e,w}$ for any path $r$ starting in this interval. \par

\noindent \textbf{Step 1}: In routing interval $[t_e, t_e + \hat{t})$, for a given path $r$, obtain the preset entry time at its originating entrance or junction, i.e., $t_r^s$; \\
\textbf{Step 2}: For the first link on path $r$, i.e., $a_1$, compute the arrival time by $t_1 = t_r^s$; for all other links in path $r$, i.e., $(a_2, a_3, ..., a_k)$, where the permutation of $a_i, 2 \le i \le k$, is by the sequence from origin to destination, compute the arrival time by $t_{i+1} = t_i + \tau_{a_i}$, where $\tau_{a_i}$ is the travel time on link $a_i$; \\
\textbf{Step 3}: For each link $a_i, 1 \le i \le k$, if the arrival time is in routing interval $e_i$ with duration $[t_{e_i}, t_{e_i} + \hat{t})$, then $\delta_{a_i,r}^{e_i,w}=1$, and for any $\bar{e} \ne e_i$ we have $\delta_{a_i,r}^{\bar{e},w} = 0$. For any link $a$ that path $r$ does not traverse, we have $\delta_{a,r}^{\bar{e},w} = 0, \forall \bar{e} \in \mathcal{E}$.

\subsubsection{Formulations and algorithms} \label{SPR_subsubsec}

We first introduce the basic routing protocol, i.e., shortest-path routing (SPR), which requires only a modest computational cost. In each routing interval, the SPR either assigns vehicles to one shortest path or keeps them waiting for a current interval; as there is no alternative route for vehicles of the same O-D pair, SPR can be treated as a protocol for assigning limited entrance permission to each vehicle. The constraint is that the number of vehicles in a platoon at any moment cannot exceed its preset upper bound. Let $\bar{\mathcal{R}}$ denote the set of shortest paths for all O-D pairs. For the routing interval $\bar{e} \in \mathcal{E}$, the SPR problem is formulated as follows: 

\begin{align}
\textbf{SPR} \nonumber \\
& \min_{\mathbf{f} \in \mathbb{N}^{\abs{\bar{\mathcal{R}}}}} \sum_{w \in \mathcal{W}} (\bar{d}^w - f_r^w)\hat{c}^w \nonumber \\
& \textrm{s.t.} \quad \sum_{w \in \mathcal{W}} \delta_{a,r}^{e,w} f_r^w \le N_a^e & \forall a \in \mathcal{A}, e \in \mathcal{E} \label{CapCons_SPR}\\
&  f_r^w \le \bar{d}^w & \forall r \in \bar{\mathcal{R}}, w\in \mathcal{W} \label{FlowCons_SPR}
\end{align}
\par

In SPR, $f_r^w$ is a decision variable representing the traffic flow on the shortest path  belonging to O-D pair $w \in \mathcal{W}$, and they must be non-negative integers; $\bar{d}^w$ is the travel demand for O-D pair $w \in \mathcal{W}$ in this routing interval, and includes both newly-arrived CAVs and CAVs kept waiting in temporal waiting lanes in previous intervals; $\hat{c}^w$ represents the cost that a CAV of O-D pair $w \in \mathcal{W}$ incurs if it is forced to wait for the current routing interval; and $N_a^e$ is the remaining room for the virtual platoon that passes through link $a \in \mathcal{A}$ within routing interval $e \in \mathcal{E}$. We note that $N_a^e$ is not always equal to the upper bound of platoon capacity $\bar{N}_p$, because at routing interval $e \in \mathcal{E}$, some platoons on some links are already “reserved” by vehicles entering before $e$. The objective function of SPR is to minimize the total cost of delayed vehicles from all O-D pairs. Constraint (\ref{CapCons_SPR}) requires that no virtual platoon can support a number of vehicles more than the preset upper bound. Constraint (\ref{FlowCons_SPR}) implies that the number of vehicles of O-D pair $w \in \mathcal{W}$ permitted to enter in routing interval $\bar{e} \in \mathcal{E}$ cannot exceed the associated demand. On a grid network, there could be multiple shortest paths between an O-D pair, and it is inefficient to use the same shortest path set in every network rhythm. One straightforward technique to alleviate this issue is to randomly select one shortest path for each O-D pair in each routing interval to avoid the undesirable case that certain paths are repetitively utilized. \par

Here, we briefly discuss the value choice for $\hat{c}^w$. As our routing protocol works in every routing interval, and all vehicles kept waiting at interval $\bar{e}$ will be rerouted at interval $\bar{e}+1$, then it is intuitive to set $\hat{c}^w = \hat{t}$. However, this setting may result in an unsatisfactory consequence: vehicles in some specific O-D pairs may be constantly delayed, because allowing them to enter is not beneficial for the minimization of SPR. To alleviate this defect, in a practical implementation we can alter the expression of the delay penalty as $\hat{c}^w = (1 + l^w)\hat{t}$, where $l^w$ is a number representing that for $l^w$ consecutive routing intervals, the queue of this O-D pair has not been cleared. With such an alteration, vehicles in those O-D pairs become “more important,” and they will be assigned entry permissions with higher priority. \par

As formulated, SPR is an integer program (IP); when the problem scale is large, exactly solving an IP could be time-consuming, and we require the solution process to be quick enough to be implemented in practical situation. Therefore, we propose a simple approximation algorithm to solve SPR.  Consider the following approximation algorithm (AA-SPR): \par

\noindent \underline{\textbf{AA-SPR}} \\
\textbf{Step 1}: Eliminate all irrelevant variables and constraints in SPR to obtain an equivalent problem of smaller size, denoted by SPR*; \\
\textbf{Step 2}: Solve the LP-relaxation of SPR* to obtain a solution $\tilde{\mathbf{f}}$; \\
\textbf{Step 3}: If $\tilde{\mathbf{f}}$ is integral, then set it as the optimal integer solution and terminate AA-SPR; otherwise, find the “most fractional” element in $\tilde{\mathbf{f}}$, i.e., $\max_{r \in \bar{\mathcal{R}}} \min (\tilde{f}_r - \lfloor \tilde{f}_r \rfloor, \lceil \tilde{f}_r \rceil - \tilde{f}_r)$, adding a constraint $f_r \le \lfloor \tilde{f}_r \rfloor$ if $\tilde{f}_r - \lfloor \tilde{f}_r \rfloor \le \lceil \tilde{f}_r \rceil - \tilde{f}_r$, and $f_r \ge \lceil \tilde{f}_r \rceil$ otherwise in SPR*, and return to Step 2. \par

We note that the coefficient matrix of Constraints (\ref{CapCons_SPR})-(\ref{FlowCons_SPR}) of SPR has $\abs{\bar{\mathcal{R}}}$ columns (equal to the size of O-D pairs) and $\abs{\mathcal{E}} \abs{\mathcal{A}} + \abs{\bar{\mathcal{R}}}$ rows, where $\abs{\mathcal{E}} \abs{\mathcal{A}}$ is the row size of Constraint (\ref{CapCons_SPR}), and $\abs{\bar{\mathcal{R}}}$ is the row size of Constraint (\ref{FlowCons_SPR}). However, the relevant row size in Constraint (\ref{CapCons_SPR}) is generally much smaller than $\abs{\mathcal{E}} \abs{\mathcal{A}}$, because most platoons are not affected by entering vehicles in the current routing interval; the relevant column size is also generally smaller than $\abs{\bar{\mathcal{R}}}$, because most O-D pairs may exhibit no demand at the current routing interval. Thus, in step 1 of AA-SPR, we eliminate all irrelevant columns and rows in Constraints (\ref{CapCons_SPR})-(\ref{FlowCons_SPR}), and denote the resulting coefficient matrix as $\mathbf{C}_{SP}$. We also note that the steps 2-3 of AA-SPR require rounding some fractional elements in the current solution to the nearest integer, and solving a series of restricted LP-relaxations. In practical implementation, this rounding procedure can be largely accelerated, as the optimal LP-relaxation solution of a new restricted problem can be obtained using a few simplex iterations from the solution obtained in the previous step (e.g., Balev et al.,
2008). Additionally, AA-SPR gives a natural optimality gap measure; the objective function value of the first round of LP-relaxation is a lower bound of the SPR, and the final solution is an upper bound. We can utilize the above bounds to evaluate the associated solution quality. \par

The previously-introduced SPR protocol only utilizes one shortest path for each O-D pair in each routing interval. When the level of demands on a network becomes high, there is a need to activate additional "non-shortest" paths to increase the capacity of the network, at the cost of increased computational time. This section provides the model and solution algorithm for a multiple-path routing (MPR) protocol, which allows the system to assign vehicles to multiple paths between the associated O-D pair. \par

In MPR, we denote the set of eligible paths between O-D pair $w \in \mathcal{W}$ as $\mathcal{R}^w_l$, and $\mathcal{R}_l = \cup_{w \in \mathcal{W}} \mathcal{R}^w_l$. When constructing eligible path sets, we define $\hat{c}_m$ as a preset detour limit, such that only paths satisfying $c_r^w \le \hat{c}_m$ are stored. The choice of $\hat{c}_m$ is a trade-off between computational efficiency and the capability of handling heavy traffic; setting $\hat{c}_m = \min_{r \in \mathcal{R}^w_l} c_r^w$ indicates that we only allow the use of all shortest paths \footnote{Note that this case is not the same as SPR, since the latter only allows one shortest path for each O-D pair.}, while setting $\hat{c}_m$ as a large number suggests we allow long detours, even containing loops. Other notations of sets and incidence factors are the same as those in the SPR. The mathematical model of MPR is then formulated as: \\
\begin{align}
\textbf{MPR} \nonumber \\
& \min_{\mathbf{f, \hat{f}} \in \mathbb{N}} \sum_{w \in \mathcal{W}} \left( \sum_{r \in \mathcal{R}^w_l} c_r^w f_r^w + \hat{c}^w \hat{f}^w \right) \nonumber \\
& \textrm{s.t.} \quad \sum_{w \in \mathcal{W}} \sum_{r \in \mathcal{R}^w_l} \delta_{a,r}^{e,w} f_r^w \le N_a^e & \forall a \in \mathcal{A}, e \in \mathcal{E} \label{CapCons_FPR}\\
&  \hat{f}^w + \sum_{r \in \mathcal{R}^w_l}f_r^w  = \bar{d}^w & \forall w\in \mathcal{W} \label{FlowCons_FPR}
\end{align}

\par

In the model MPR, there are two sets of decision variables: $f_r^w$ represents the flow on path $r \in \mathcal{R}^w_l$ between O-D pair $w \in \mathcal{W}$, and $\hat{f}^w$ represents the number of vehicles being kept waiting for this routing interval between O-D pair $w \in \mathcal{W}$; both the decision variables must be non-negative integers. In the objective function, $c_r^w$ is the travel time on path $r \in \mathcal{R}^w_l$, and $\hat{c}^w$ is the penalty of delay; the objective function is to minimize the total cost in this routing interval. Constraint (\ref{CapCons_FPR}) is a platoon capacity constraint, and is similar to Constraint (\ref{CapCons_SPR}) in SPR. Constraint (\ref{FlowCons_FPR}) states that the sum of the number of vehicles assigned to all eligible paths between O-D pair $w \in \mathcal{W}$ and those kept waiting should be equal to the associated demand of vehicles ready for entry. \par

As formulated, MPR is also an integer program. Similarly to SPR, to guarantee a real-time implementation, we also adopt an LP-relaxation-based approximation algorithm to solve MPR (AA-MPR). \par

\noindent \underline{\textbf{AA-MPR}} \\
\textbf{Step 1}: According to current $\hat{c}^w$, extract all paths such that $c_r^w \le \hat{c}_m$ to form the eligible path set $\mathcal{R}_l$, and formulate MPR based on $\mathcal{R}_l$; \\
\textbf{Step 2}: Solve the LP-relaxation of MPR to obtain a solution $(\mathbf{f}^{\circ}, \hat{\mathbf{f}}^{\circ})$; \\
\textbf{Step 3}: If $(\mathbf{f}^{\circ}, \hat{\mathbf{f}}^{\circ})$ is integral, then set it as the optimal integer solution and terminate AA-MPR; otherwise, find the “most fractional” element in $(\mathbf{f}^{\circ}, \hat{\mathbf{f}}^{\circ})$ as in AA-MPR, and suppose the element is $f_r^{\circ}$. Then, add a constraint $f_r \le \lfloor f_r^{\circ} \rfloor$ if $f_r^{\circ} - \lfloor f_r^{\circ} \rfloor \le \lceil f_r^{\circ} \rceil - f_r^{\circ}$ and $f_r \ge \lceil f_r^{\circ} \rceil$ otherwise in MPR. Return to Step 3.\par

\subsubsection{Power of LP-relaxation} \label{LPR_subsubsec}

In this subsection, we provide some insights and simulation test results on the effectiveness of AA-SPR and AA-MPR presented above. Specifically, we identify a mild condition that the LP relaxation leads to an integral optimal solution, and then demonstrate some specific examples for the embodiment of the mild condition. A Monte-Carlo simulation with AA-SPR is conducted to show that the LP-relaxation-based algorithm achieves superior solution quality. \par

\begin{defi} \label{defi_LS}
$\mathbf{C}_{SP}$ is called \textbf{locally separable} if for any $k$ different paths $r_1, r_2, ..., r_k$ (the set is denoted by $\mathcal{R}^k$) and $k$ different temporal links $(a_1,e_1), (a_2,e_2), ..., (a_k,e_k)$ (the set is denoted by $\mathcal{A}_{\mathcal{E}}^k$), one of the following three conditions hold: \\
(i) Two paths in $\mathcal{R}^k$ traverse the same set of temporal links in $\mathcal{A}_{\mathcal{E}}^k$; \\
(ii) Two temporal links in $\mathcal{A}_{\mathcal{E}}^k$ are traversed by the same set of paths in $\mathcal{R}^k$;\\
(iii) There exists a $k_0 \in \{ 1,2,...,k-1 \}$ such that there are $k_0$ paths in $\mathcal{R}^k$ never traversing $k - k_0$ temporal links in $\mathcal{A}_{\mathcal{E}}^k$.
\end{defi}

\begin{prop}\label{prop3}
If $\mathbf{C}_{SP}$ is locally separable, then the LP-relaxations of SPR and MPR have integral optimal solutions.
\end{prop}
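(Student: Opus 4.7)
My plan is to first show that $\mathbf{C}_{SP}$ is totally unimodular (TU) under the local-separability hypothesis, then lift total unimodularity to the full constraint matrices of SPR and MPR, and finally invoke the standard integrality theorem. Since the right-hand sides $N_a^e$ and $\bar{d}^w$ are integers by construction, TU of the constraint matrix will immediately imply that every basic optimal solution of the LP-relaxation is integral.

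The core step is an induction on $k$ showing that every $k \times k$ submatrix of $\mathbf{C}_{SP}$ has determinant in $\{-1,0,1\}$. The base case $k=1$ is immediate because $\mathbf{C}_{SP}$ is 0/1-valued. For $k \ge 2$, fix a $k \times k$ submatrix $M$ indexed by paths $\mathcal{R}^k$ and temporal links $\mathcal{A}_{\mathcal{E}}^k$, and apply Definition \ref{defi_LS}. If condition (i) holds, two columns of $M$ coincide and $\det(M)=0$; symmetrically, condition (ii) yields two identical rows and $\det(M)=0$. Under condition (iii), some $k_0 \in \{1,\dots,k-1\}$ paths never traverse $k-k_0$ of the chosen temporal links, so after a suitable row and column relabeling $M$ takes the block anti-triangular form $M = \begin{pmatrix} 0 & A \\ B & C \end{pmatrix}$ with $A$ of size $(k-k_0) \times (k-k_0)$ and $B$ of size $k_0 \times k_0$, whence $\det(M) = \pm\det(A)\det(B)$ by cofactor expansion. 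Because every submatrix of $\mathbf{C}_{SP}$ inherits local separability by restricting the universal quantifier in the definition, the inductive hypothesis forces $\det(A),\det(B) \in \{-1,0,1\}$, and hence $\det(M) \in \{-1,0,1\}$, giving TU of $\mathbf{C}_{SP}$.

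Next I would lift TU to the full constraint matrices. For SPR, incorporating the box constraints $f_r^w \le \bar{d}^w$ only appends identity rows to $\mathbf{C}_{SP}$, and appending identity rows preserves TU by a one-line Laplace expansion. For MPR, the augmented matrix has the block form $\begin{pmatrix} \mathbf{C}_{SP} & 0 \\ D & I \end{pmatrix}$, where $D$ is the path-OD incidence matrix and the trailing identity block corresponds to the slack variables $\hat{f}^w$; pivoting on that identity block reduces the TU question to that of $\begin{pmatrix} \mathbf{C}_{SP} \\ D \end{pmatrix}$. Because each column of $D$ contains exactly one $1$ (each path serves a unique OD pair), the OD-pair rows can be interpreted as virtual temporal links traversed by precisely the paths of $\mathcal{R}_l^w$, so the same inductive machinery applies to the enlarged row set. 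Combined with integer $N_a^e$ and $\bar{d}^w$, the standard Hoffman--Kruskal theorem yields integrality of optima for both SPR and MPR.

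The main obstacle will be the MPR step: verifying that the extended row family built from genuine temporal links together with the new OD-pair rows still satisfies local separability. Conditions (i)-(ii) in Definition \ref{defi_LS} are tractable for mixed row selections because OD rows have a very rigid single-$1$-per-column pattern, but verifying condition (iii) when a subset mixes genuine temporal-link rows with OD rows may require a short combinatorial lemma. If that lemma resists, a fallback is to argue MPR's LP-integrality directly by decomposing any basic feasible solution of the LP-relaxation into integer path flows, leveraging the TU of $\mathbf{C}_{SP}$ and the single-$1$ structure of $D$ to round without loss of feasibility.
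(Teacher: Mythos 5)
Your treatment of the SPR part is correct and is essentially the paper's own argument: induct on the size of a square submatrix, dispose of submatrices containing an identity-type row (from $f_r^w\le\bar d^w$) by cofactor expansion, and for submatrices built from capacity rows use the three conditions of Definition \ref{defi_LS} — (i)/(ii) give repeated columns/rows and determinant $0$, (iii) gives a block-(anti)triangular form whose determinant factors into two smaller minors — then invoke total unimodularity plus integral right-hand sides (Hoffman–Kruskal, as in Wolsey and Nemhauser). The only cosmetic difference is that you keep the identity rows outside $\mathbf{C}_{SP}$ and append them afterwards, whereas the paper folds them into the same induction; this changes nothing.

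The genuine gap is in your MPR step, and it is worth being precise about why it cannot be repaired as stated. Reducing to total unimodularity of $\bigl[\begin{smallmatrix}\mathbf{C}_{SP}\\ D\end{smallmatrix}\bigr]$ is fine, but your key claim — that the O-D rows of $D$ can be treated as ``virtual temporal links'' so that the same inductive machinery applies — does not follow from the hypothesis, because local separability is assumed only for genuine temporal links and the mixed row family can violate unimodularity. Concretely, take two eligible paths $r_1,r_3$ of the same O-D pair $w$ and a path $r_2$ of another O-D pair that crosses $r_1$ at temporal link $(a_1,e_1)$ and $r_3$ at a later temporal link $(a_2,e_2)$, with $r_1$ not on $(a_2,e_2)$ and $r_3$ not on $(a_1,e_1)$. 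The $3\times 3$ submatrix with rows $(a_1,e_1)$, $(a_2,e_2)$ and the equality row of $w$, and columns $r_1,r_2,r_3$, is $\left[\begin{smallmatrix}1&1&0\\0&1&1\\1&0&1\end{smallmatrix}\right]$, whose determinant is $2$; the O-D row plays exactly the role of the third link in the paper's interconnection-loop example, yet the link–path part of this configuration is perfectly locally separable (each pair of the chosen temporal links and paths satisfies condition (iii)). The timing argument that rules out genuine 3-path interconnection loops does not apply here, since $r_1$ and $r_3$ ``meet'' at their common origin rather than at a link, so TU of the MPR matrix genuinely can fail under the stated hypothesis; since TU is sufficient but not necessary, the MPR claim would need either an extra structural lemma restricting same-O-D path interactions or a direct integrality/rounding argument of the kind you sketch as a fallback. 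You flag this obstacle honestly — and, for what it is worth, the paper's own appendix proof never addresses MPR at all: it establishes total unimodularity of $\mathbf{C}_{SP}$ and concludes only for SPR, so on this point your attempt goes beyond the published argument rather than falling short of it.
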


\begin{proof}
See Appendix A.
\end{proof}
\par

The locally-separable property actually states that, for any $k$ paths and $k$ temporal links, either there are two paths or two temporal links that are “locally identical,” or we can “separate” a portion of those paths with another portion of temporal links, such that they are uncorrelated. We describe some special cases where the locally-separable property holds below. \par

\begin{itemize}
    \item \textit{No paths in set $\bar{\mathcal{R}}$ are interconnected}. In such a case, any temporal link is passed through by at most one path, so the locally-separable property holds. We should point out that in SPR, the path-interconnection is much scarcer than in a static network, because two interconnected paths should not only pass through the same link, but also arrive at it at the same time.

    \item \textit{All paths are straight}. In such a case, no turning movement exists, and vehicles entering from different origins will never cross; it is not difficult to prove that the locally-separate property holds under such a setting. In reality, this requirement may be difficult to achieve, but in many cases, straight traffic streams form the majority of network traffic, and a minor proportion of the turning movements is likely to maintain the locally-separable property.

\begin{figure}[!ht]
	\centering
	\subfloat[][]{\includegraphics[width=0.4\textwidth]{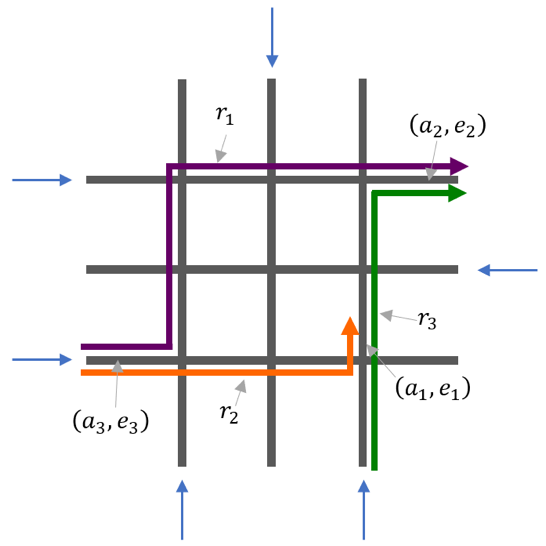}} \hspace{1em}
	\subfloat[][]{\includegraphics[width=0.4\textwidth]{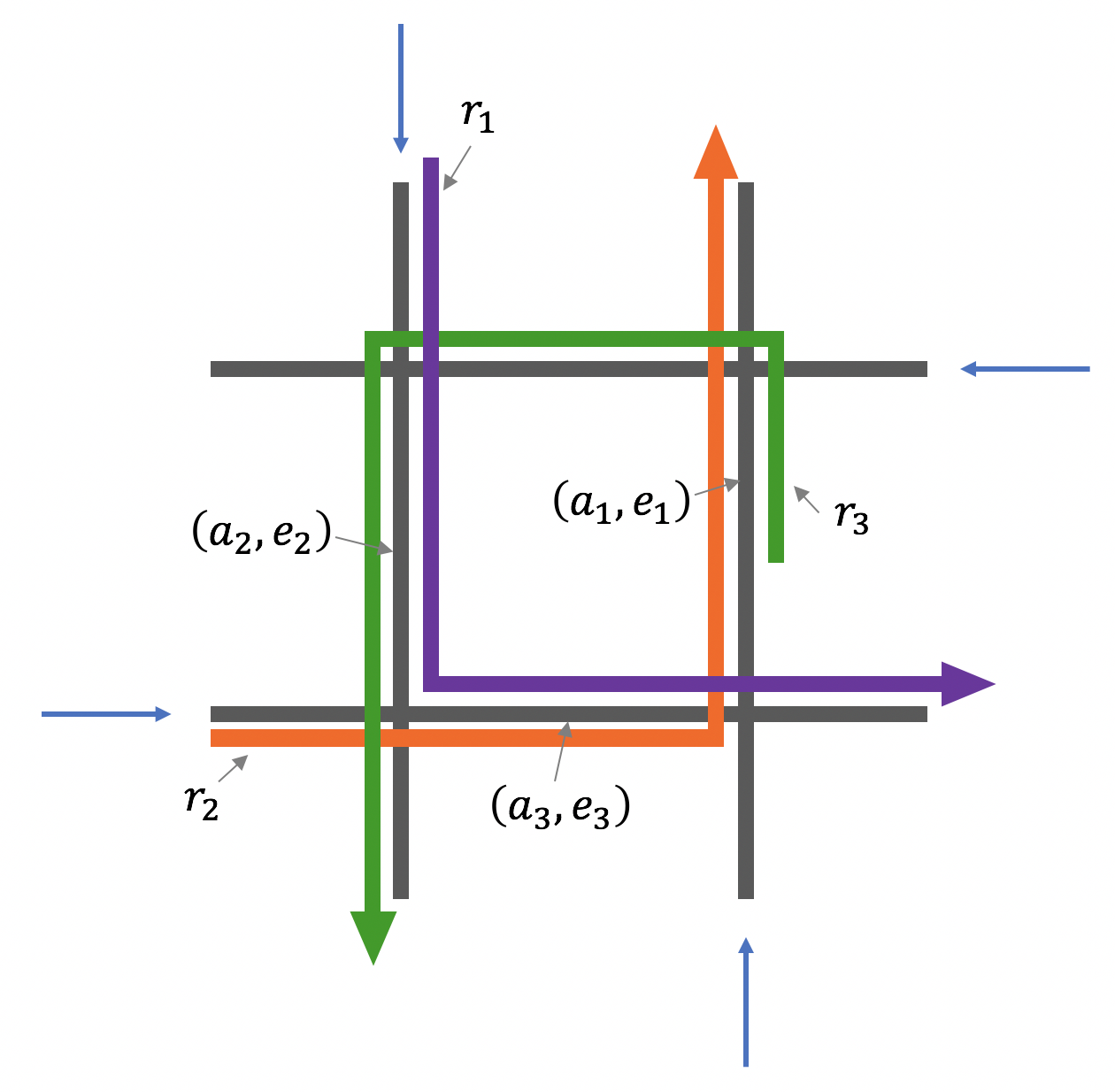}}
	\caption[]{(a) Interconnection loop. (b) No interconnection loop.} 
	\label{NonInt}
\end{figure}
\par
    
     \item \textit{No interconnection loop exists}. An interconnection loop is defined as a set of paths $r_1,r_2,...,r_k, k\ge3$ such that $r_i$ and $r_{i+1}$ are interconnected, $i\in{1,2,...,k}, \ r_{k+1}=r_1$. The non-existence of an interconnection loop is actually equivalent to a case where any square sub-matrix of $\mathbf{C}_{SP}$ with a size larger than two has at least one row that has at most one non-zero element (there exists one temporal link crossed by at most one path).This satisfies condition (iii) in the definition of a locally-separable property. Here, we point out that the existence of an interconnection loop actually contributes to the non-integrality of LP-relaxation solution (if exists), and a simple example is shown below. Consider three paths $(r_1,r_2,r_3)$ as shown in Figure \ref{NonInt}(a), where $r_1$ and $r_2$ intersect at temporal link $(a_3,e_3)$, $r_2$ and $r_3$ intersect at temporal link $(a_1,e_1)$, and $r_3$ and $r_1$ intersect at temporal link $(a_2,e_2)$; clearly, $r_1,r_2,r_3$ forms an interconnection loop, and it does not satisfy the locally-separable property. If we extract those three paths and three temporal links, the associated square sub-matrix is $\left[ \begin{matrix} 0 & 1 & 1 \\ 1 & 0 & 1 \\ 1 & 1 & 0 \end{matrix} \right]$, and the determinant is 2, i.e., it is not unimodular. Assuming that the demands on the three paths are one, the capacities on all temporal links are one, and the objective is to maximize the number of entered vehicles, it is straightforward that only one vehicle is allowed to enter, but for the LP-relaxation of the problem, the best solution is $(f_1,f_2,f_3) = (0.5,0.5,0.5)$, resulting in a total flow of 1.5. Thus, there is no integral optimal solution for the LP-relaxation. Here we also note that, unlike Figure \ref{NonInt}(a), the case in Figure \ref{NonInt}(b) does not form an interconnection loop; for it, if $r_1$ and $r_2$ intersect at $(a_3,e_3)$, $r_2$ and $r_3$ intersect at $(a_1,e_1)$, then $r_3$ and $r_1$ cannot intersection at $(a_2,e_2)$ because the vehicles on $r_1$ have already passed through link $a_2$ when the vehicles on $r_3$ arrive at $a_2$. This example suggests that the existence of interconnection loop in RC routing is substantially more limited than in an ordinary multi-commodity flow problem due to the presence of time horizon.
\end{itemize}

The rare existence of interconnection loop in the RC supports the effectiveness of the LP-relaxation because the locally-separable property is most likely satisfied. To further demonstrate it, we numerically calculate the existence probability of an interconnection loop in an arbitrary gird network. As one can imagine, this work is cumbersome as the possible interconnection loops can be of various forms. As the number of paths to form an interconnection loop increases, it becomes rather difficult to calculate its existence probability. The following table shows the existence probability of an interconnection loop constituted by three paths on grid networks with sizes of no more than $10 \times 10$. The detailed computational procedure is shown in Appendix B.

\begin{table}[!ht]
  \centering
  \caption{Existence of interconnection loops with three paths}
  \label{Loop_table}
  \footnotesize
  \begin{tabular}{ l  c  c  c  c  c}
   \hline
		Network size	& $2 \times 2$ & $4 \times 4$ & $6 \times 6$ & $8 \times 8$ & $10 \times 10$ \\
		\hline
		Number of shortest paths   & 60 & 1,000 & 5,124 & 16,272 & 39,820  \\
		Number of 3-path combinations   & $3.42 \times 10^4$ & $1.66 \times 10^8$ & $2.24 \times 10^{10}$ & $7.18 \times 10^{11}$ & $1.05 \times 10^{13}$ \\
		Number of 3-path interconnection loops  & 0 & 0 & 1,472 & 42,568 & 389,324 \\
		Probability of 3-path interconnection loops & 0 & 0 & $6.57 \times 10^{-8}$ & $5.93 \times 10^{-8}$ & $3.70 \times 10^{-8}$ \\
   \hline
  \end{tabular}
\end{table}
\par

From Table \ref{Loop_table}, we observe that the existence of a 3-path interconnection loop is rather limited, i.e., if we randomly extract three different paths from the set of the shortest paths, then the probability of forming an interconnection loop is no more than $10^{-7}$ (one over ten million); moreover, a network with a size of no more than $4 \times 4$ contains no interconnection loop constituted by three paths. Considering that a loop with more than three paths is more difficult to be formed, we thus draw an empirical conclusion that the locally-separable property is ubiquitous, especially in SPR. Furthermore, even when a case violating the property occurs, it is possible that the associated constraints are non-binding, i.e., it does not affect the integrality of the optimal solution to the LP-relaxation.  To further support the above analyses in the integrality of solution to the LP-relaxation, we conduct a thorough Monte-Carlo simulation with a $6 \times 6$ one-way street network to illustrate the solution quality of AA-SPR; the details are given in Appendix B. Among 10,000 simulation tests, we observe that the probability that the LP-relaxation of SPR directly gives an optimal integral solution is $99.86\%$. For the remaining $0.14\%$ of cases, we find that the objective function value gap between the solution to the LP-relaxation and the rounded integral solution is less than $0.02\%$. 

\section{Some extensions} \label{Extension}

In this section, we discuss some extensions to the RC for CAV traffic on one-way grid networks. Those extensions are aimed at handling various traffic and infrastructure conditions, so as to improve the usability and robustness. 

\subsection{Buffer and choice of rhythm length} \label{Buffer_subsec}

In reality, especially in near future when the CAV technology is not mature enough, the control should always be able to endure some level of uncertainties in vehicle movements. For RC, such uncertainties imply that some vehicles may go beyond the allowable space-time ranges provided by virtual platoons, and consequently induce potential collision risks at crossroads. For managing such risks, we introduce a "buffer" in a virtual platoon. The concept is graphically illustrated in Figure \ref{Buffer_fig}. As we can see, parts of the head and the tail of a virtual platoon are set as the buffer zones, and only the middle part (called valid zone) is allowed to contain vehicles in normal situations. When the platoon is running, if the vehicles experience some disturbances in speed, then they could go beyond the valid zone and fall into the buffer zones; but falling into the buffer zones will not cause collision risks at crossroads as the whole virtual platoon does not have space-time interference with other virtual platoons based on Proposition \ref{prop2}. Furthermore, for ensuring safety, some monitoring facilities should be located at roadsides; once identifying some vehicles falling into the buffer zones, the CCMS should send orders to them and push them back to the valid zone.

\begin{figure}[!ht]
	\centering
	\includegraphics[width=0.65\textwidth]{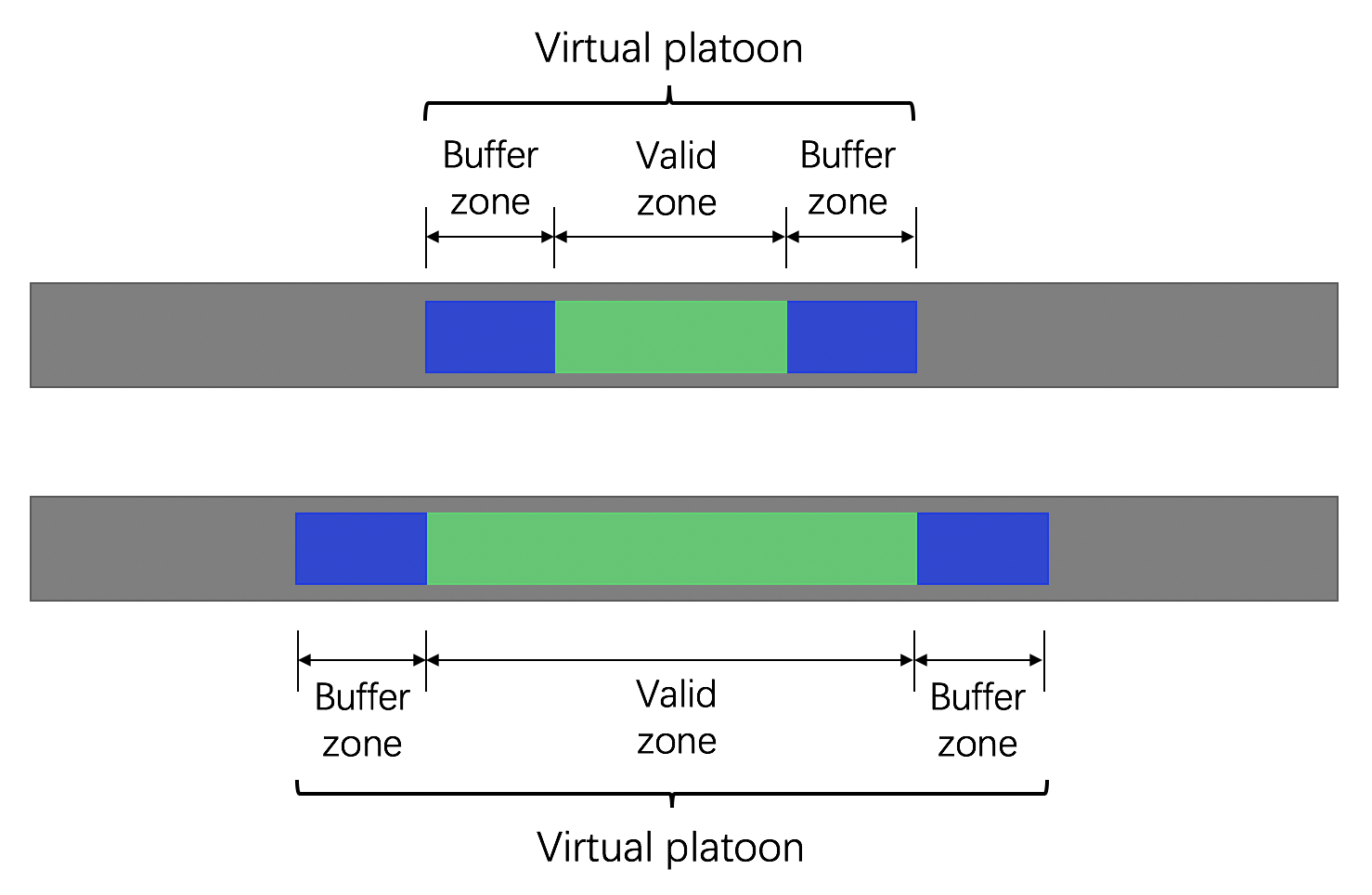}
	\caption[]{Illustration of buffers in virtual platoons.}
	\label{Buffer_fig}
\end{figure}

\begin{figure}[!ht]
	\centering
	\includegraphics[width=0.5\textwidth]{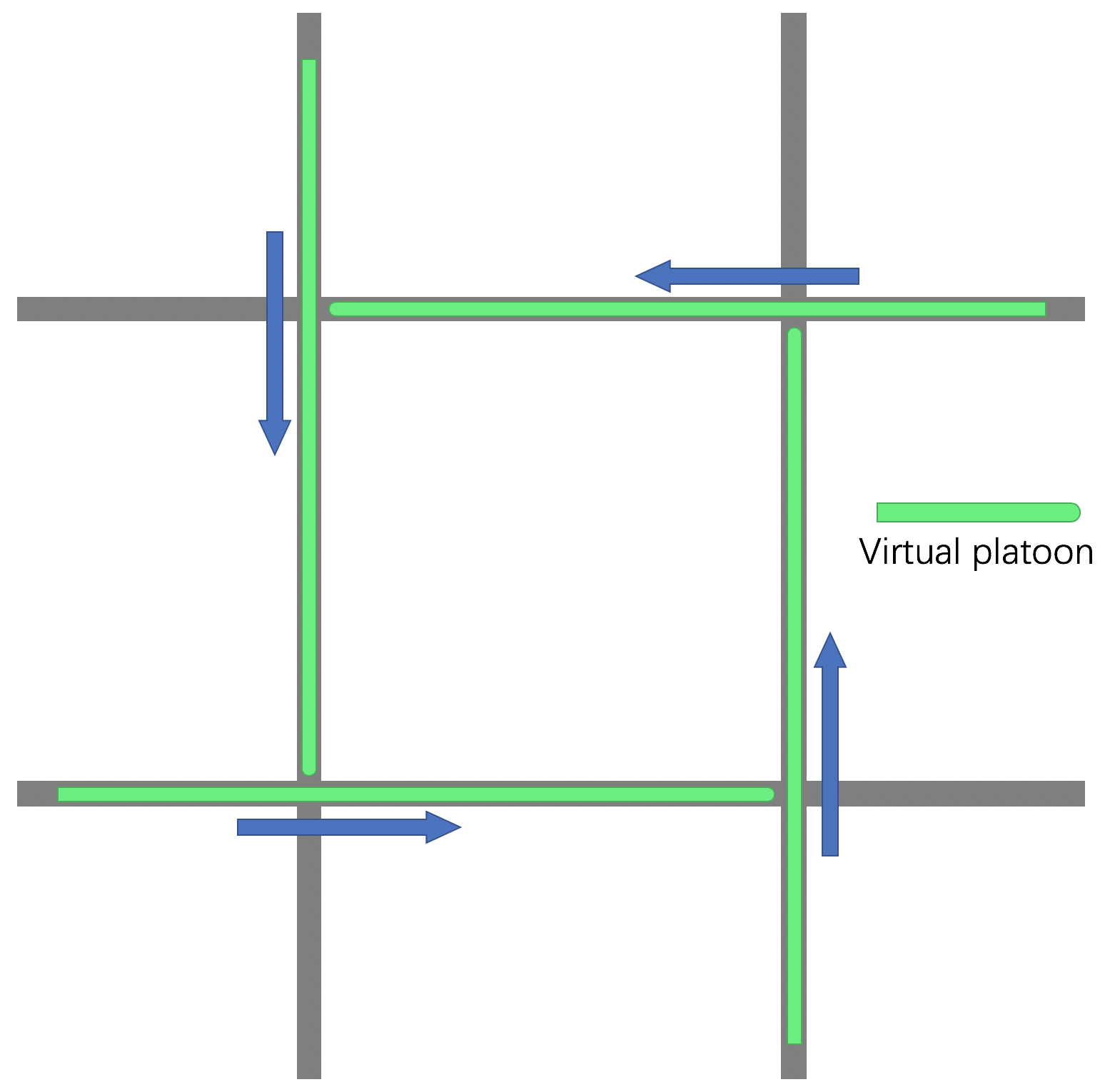}
	\caption[]{The paradox loop with excessive long virtual platoons.}
	\label{Paradox_fig}
\end{figure}
\par

Now consider the choice of network rhythm length. From Figure \ref{Buffer_fig} we can observe that, given a fixed length of buffer zone, the longer a virtual platoon is, the larger the proportion of the valid zone is; and a larger proportion of valid zone in each virtual platoon suggests a larger network traffic capacity. Meanwhile, a longer platoon requires a longer network rhythm to accommodate. Thus, it is intuitive that setting a longer network rhythm can accommodate higher travel demands. However, it is not beneficial to set up an excessively long network rhythm for expanding the service capacity due to two reasons: i) by the mechanism of RC, a long network rhythm incurs long average waiting times at entrances or junctions; and ii) excessive long virtual platoons could form a "paradox loop", which leads to gridlocks (see Figure \ref{Paradox_fig}). Therefore, the length of network rhythm is a trade-off between network capacity and vehicle delay at entrances or junctions.

\par
Here we provide a simple method for choosing a usable network rhythm length given the demand information. Suppose that the static demand rate between O-D pair $w \in \mathcal{W}$ is $\tilde{d}^w$, and $\mathcal{R}^w$ is the set of all paths connecting O-D pair $w$. We denote $\mathcal{H}$ as the set of all optional rhythm lengths, and for each $\hat{t} \in \mathcal{H}$, we denote $p_{\hat{t}}$ as the proportion of length of the valid zone in a virtual platoon; $p_{\hat{t}}$ should increase with $\hat{t}$. On each link $a \in \mathcal{A}$ on the network, the maximum allowable rate of vehicle throughput is $\frac{1}{2} l_a y_{m}$, where $l_a$ is the number of lanes on this link, and $y_{m}$ is the single-lane saturated flow rate; the term $\frac{1}{2}$ is due to the fact that RC provides each link half of its full right-of-way. Therefore, when provided a network rhythm length $\hat{t} \in \mathcal{H}$, the capacity of this link is $C_{a, \hat{t}} = \frac{1}{2} p_{\hat{t}} l_a y_{m}$. Furthermore, we define $\Omega(\tilde{\mathbf{d}},\hat{t}) = \left\{ \tilde{\mathbf{f}} \left| \begin{matrix} \tilde{d}^w = \sum_{r \in \mathcal{R}^w} \tilde{f}_r^w & \forall w \in \mathcal{W} \\ \sum_{w \in \mathcal{W}} \sum_{r \in \mathcal{R}^w} \delta_{a,r}^{w} \tilde{f}_r^w \le \gamma C_{a, \hat{t}} & \forall a \in \mathcal{A} \\ \tilde{f}_r^w \ge 0 & \forall w \in \mathcal{W}, r \in \mathcal{R}^w \end{matrix} \right. \right\}$, where $\tilde{f}_r^w$ is the vehicle demands assigned on route $r$ between O-D pair $w \in \mathcal{W}$, $\delta_{a,r}^w$ is the incidence factor between route $r$ and link $a$, and $\gamma \in (0,1)$ is a factor of robustness; it represents the set of all feasible path flow assignments given the demand information $\tilde{\mathbf{d}}$ and the network rhythm length $\hat{t}$. Then, we can formulate the problem of determining a desirable rhythm length as:

\begin{align}
& \hat{t}^* = \min_{\hat{t} \in \mathcal{H}} \left\{ \hat{t} | \Omega(\tilde{\mathbf{d}},\hat{t}) \ne \emptyset \right\} \label{choice_rhythm_eq}
\end{align}
\par

Eq.(\ref{choice_rhythm_eq}) represents that, given the demand information $\tilde{\mathbf{d}}$, we find the minimal rhythm length $\hat{t}$ under which there exists at least one feasible path flow assignment. In other words, we seek to minimize the rhythm length as long as the capacity is sufficient to accommodate the current demands. Note that if for all $\hat{t} \in \mathcal{H}$ we have $\Omega(\tilde{\mathbf{d}},\hat{t}) = \emptyset$, then we choose $\hat{t}^* = \max_{\hat{t} \in \mathcal{H}} \hat{t}$.

\subsection{Imbalanced demands} \label{ImbaD_subsec}

We discuss operational strategies to alleviate the problem of imbalanced demands on existing one-way grid networks. In Section \ref{rhythm_subsec}, we stated that in RC, “each virtual platoon shares the same capacity”; in reality, to handle the imbalances of demands, such a requirement can be relaxed. One typical situation is where the vertical vehicle demands and horizontal vehicle demands are asymmetric. In this context, we can set different capacities for the vertical and horizontal platoons without violating the conflict-free guarantee. In particular, for the network rhythm $\hat{t}$, we set the time for horizontal platoons to pass through a fixed point as $\beta \hat{t}$, and the time for vertical platoons as $(1-\beta) \hat{t}$. We set the entry time for horizontal platoons as $k \hat{t}, \forall k \in \mathbb{N}$, and for vertical platoons as $(k+\beta) \hat{t}, \forall k \in \mathbb{N}$. Then, by a procedure similar to that used in the proof of Proposition \ref{prop2}, we can ensure that no platoons collide on the network. When $0 < \beta < 0.5$, we provide more capacity for vertical demands, and when $0.5 < \beta < 1$ the case is reversed. Graphical illustrations of these concepts are shown in Figures \ref{ImbaD_fig}(a) and \ref{ImbaD_fig}(b). We note that such maneuvers can change dynamically according to the variations of demand distribution. Thus, the proposed system can handle various patterns of demand in a dynamic environment. Note that the length of network rhythm also has impact on system performance in the imbalanced demand cases, and we can adopt the method proposed in the previous subsection with slight modifications to determine it.

\begin{figure}[!ht]
	\centering
	\subfloat[][]{\includegraphics[width=0.4\textwidth]{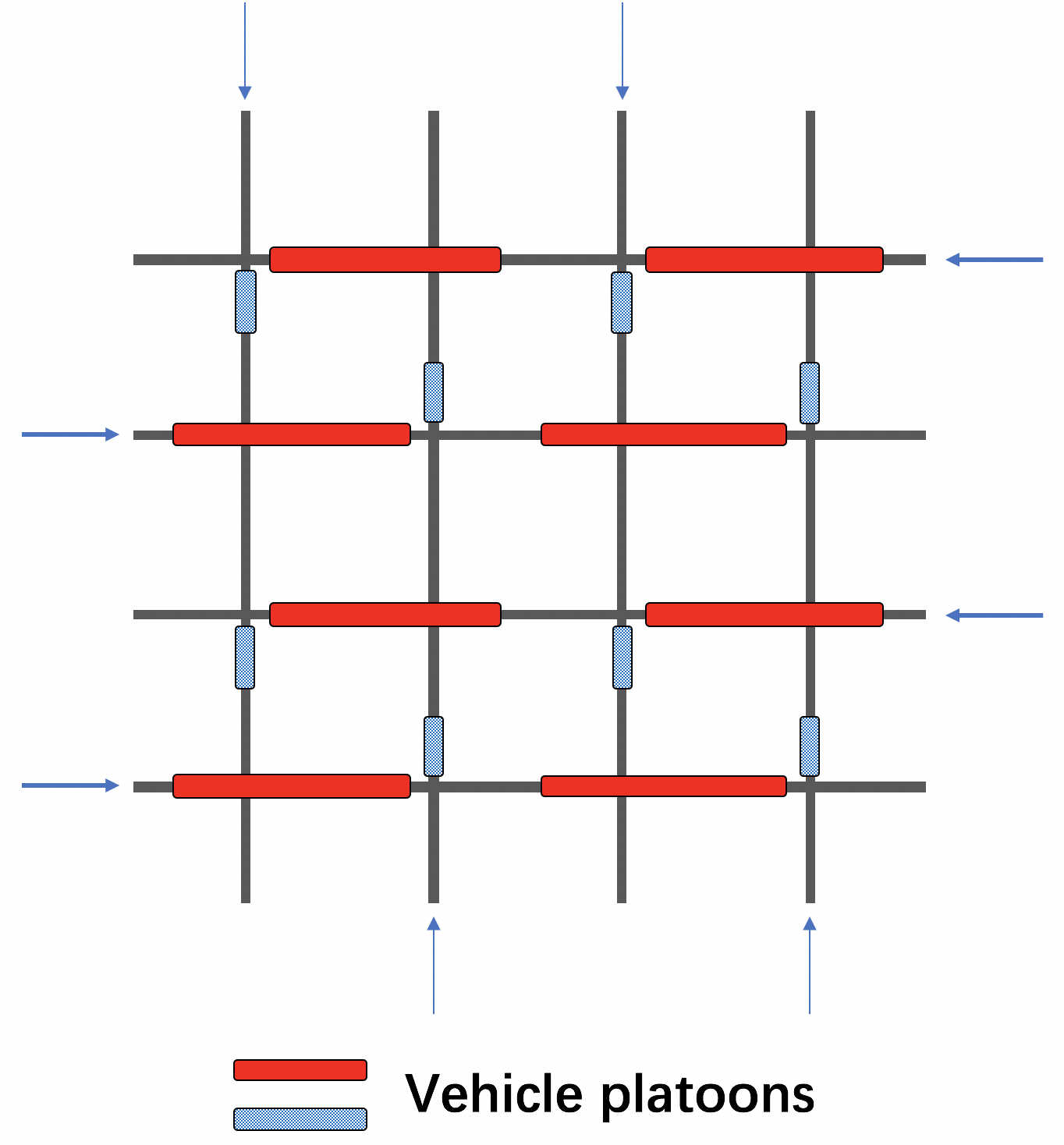}}\hspace{1em}
	\subfloat[][]{\includegraphics[width=0.4\textwidth]{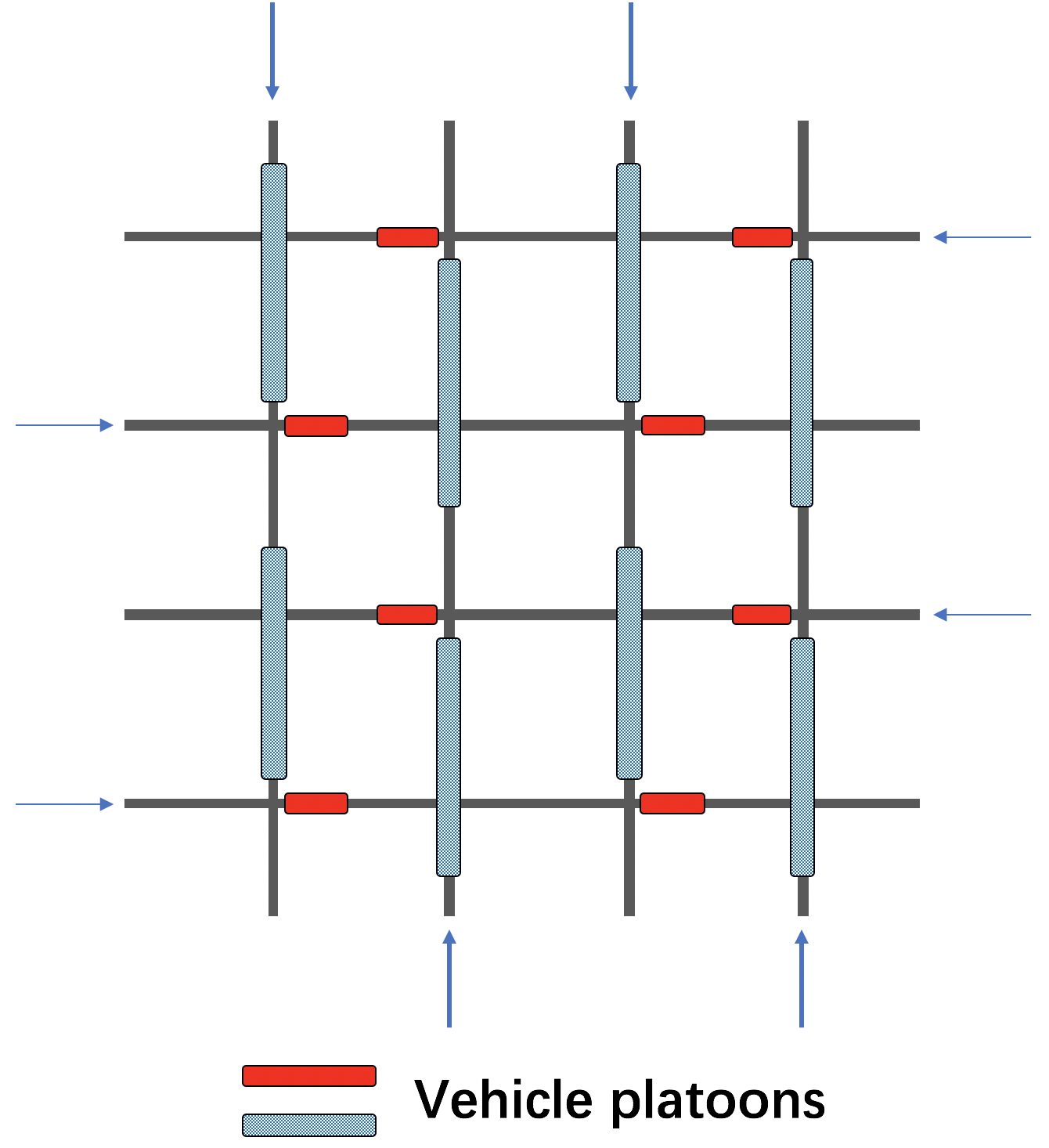}}\\
	\subfloat[][]{\includegraphics[width=0.4\textwidth]{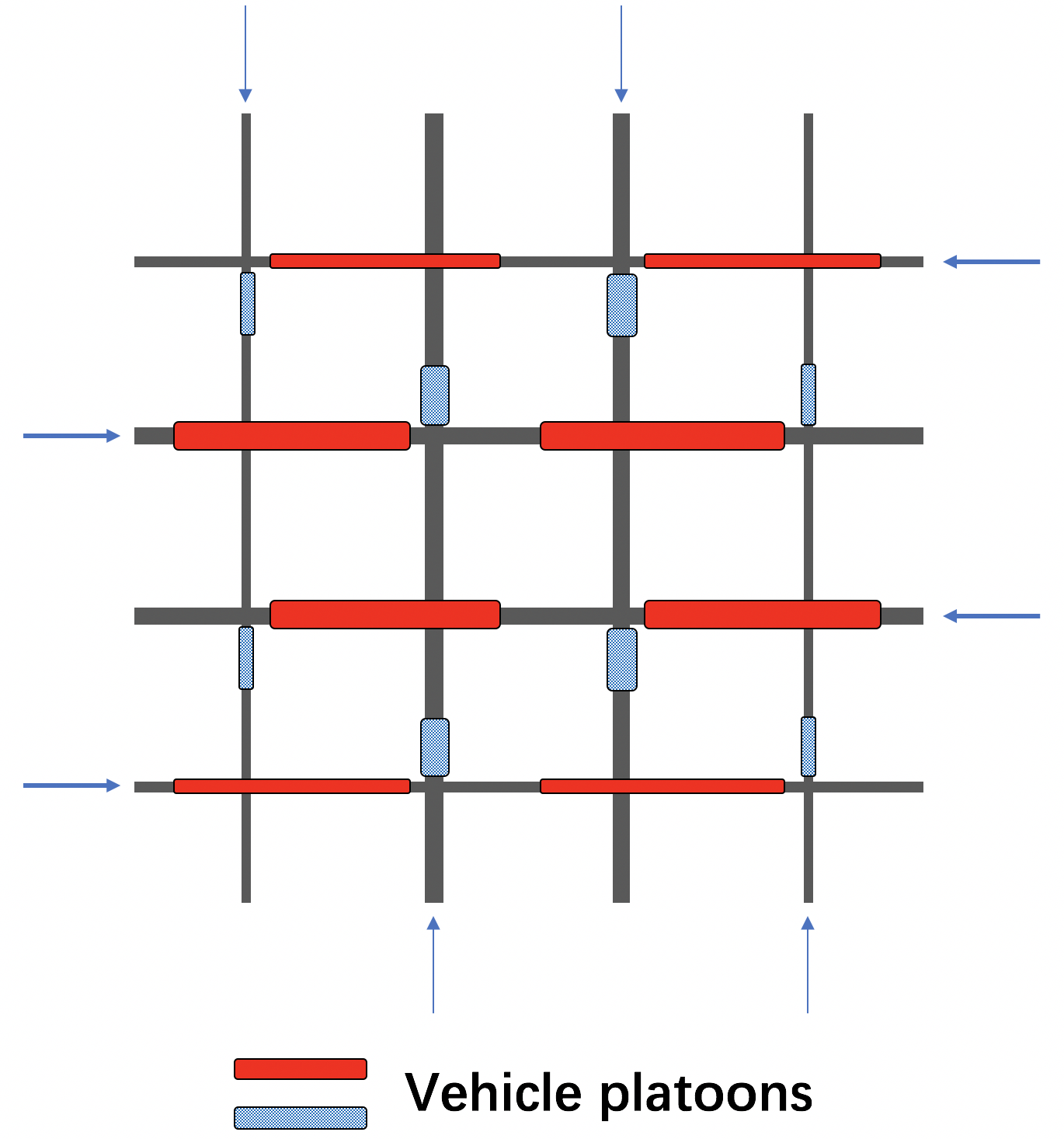}}
	\caption[]{Virtual platoons under imbalanced demands. (a) $\beta = 0.75$. (b) $\beta = 0.25$. (c) Heterogeneous street widths.}
	\label{ImbaD_fig}
\end{figure}
\par

Another way of managing imbalanced demands is to appropriately design the lane numbers on each street in the planning stages. For example, as shown in Figure \ref{ImbaD_fig}(c), different widths are assigned to different streets. Thus, even with the same length, platoons on streets with a larger width have a larger capacity. However, this way of handling imbalanced demands requires careful planning before building the network, and once determined, the infrastructure system cannot be altered in real time. Thus, this lane deployment method is not sufficiently flexible for handling time-variant demands.

\subsection{Temporary waiting within network} \label{TempWait_subsec}

In the RC framework, the vehicles entering a network will not incur any delay until they reach their destinations, and possible vehicle queuing can only occur at entrances or junctions (e.g., on perimeter gates or within a parking garage). Although such design avoids traffic gridlock on the network, undesirable congestion may occur outside. When encountering high traffic demands, it could be necessary to utilize some road space on the network for temporarily storing those vehicles. For this purpose, we provide several available options that can be used for temporary waiting lanes:

\begin{itemize}
    \item The curbside lanes used for junction entrances and exits, as shown in Figure 2(c). When some vehicles are ordered to wait temporarily on these curbside lanes, the system treats them as new trip demands generated from the corresponding junctions, and the subsequent space-time routes can be determined accordingly.
    
    \item The turning lanes from one arterial to another, as shown in Figure \ref{Components}(b). Similar to the previous case, vehicles waiting on these lanes are treated as newly generated demands, and subsequent routing orders will be given. Note that using the turning lanes as temporary waiting lanes causes temporary blockage of the traffic on this lane.
    
    \item Except for the two options above, on many road networks there exist curbside parking lanes. As the CCMS can collect the occupational information of these parking lanes, the unoccupied space can be used for temporary waiting.
\end{itemize}

When the temporary waiting within the network is considered, the online routing should be modified accordingly to accommodate such maneuvers. One simple way of modification is to split the whole path connecting an O-D pair into several parts by the temporary waiting zones it passes through. Each time a vehicle is about to approach a temporary waiting zone, the system decides whether it continues or enter the waiting zone based on the current platoon occupation information.

\subsection{Heterogeneous block sizes} \label{HetBlock_subsec}

In our current setting, all blocks are considered as identical, i.e., the intervals between two parallel neighboring streets are the same. Such a setting may be too idealized in reality, owing to the restraints in urban planning. If we design a one-way grid network that allows for heterogeneous block sizes (see Figure \ref{HetBlock_fig}), then it could be impossible to ensure the conflict-free property with a constant virtual platoon speed as proposed in Section \ref{rhythm_subsec}. This section therefore attempts to alleviate this problem, by investigating how to design variable speed curves for virtual platoons to achieve the conflict-free goal.

\begin{figure}[!ht]
	\centering
	\includegraphics[width=0.5\textwidth]{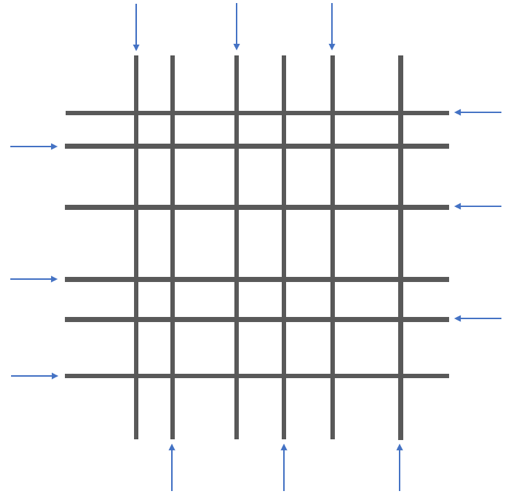}
	\caption[]{Heterogeneous block sizes}
	\label{HetBlock_fig}
\end{figure}

For convenience of discussion, we continue to assume that all platoons are able to pass through a fixed point in $\frac{1}{2}\hat{t}$. By the proof of Proposition \ref{prop2}, it is clear that the key to platoon coordination at crossroads is that the platoon travel time between two neighboring streets, i.e., $t_c$, satisfies the equation $t_c = a\hat{t}, a \in \mathbb{N}$. Inspired by such an observation, we can design the speed curve such that the travel times between two consecutive crossroads are always integers multiplied by $\hat{t}$; furthermore, to guarantee a platoon is able to pass through a fixed point within $\frac{1}{2}\hat{t}$,  we expect platoons to drive with considerable speeds at crossroads. We present the following speed curve generation (SCG) procedure to design the speed of platoons on a given street.\par

\noindent \underline{\textbf{SCG}} \\
\textbf{Step 1}: Acquire the inputs: maximum allowable speed $v_{max}$, maximum allowable acceleration $c_{max}$, maximum allowable deceleration $d_{max}$, and minimum allowable speed at crossroads $v_{min}^c$; the distances between two consecutive crossroads are $l_1,l_2,...,l_n$, ordered by sequence; the arrival time at the first crossroads is set to be $t_0 = 0$ with speed $v_0 \ge v_{min}^c$; initialize the iteration count as $i=1$. \\
\textbf{Step 2}: Compute $l_{max}^{i}(a) = \left\{ \begin{matrix} v_{i-1}a\hat{t} + \frac{1}{2}c_{max}(a\hat{t})^2, \ \textrm{if} \ a\hat{t} \le \frac{v_{max}-v_{i-1}}{c_{max}} \\ \frac{v_{max}^2-v_{i-1}^2}{2c_{max}} + v_{max}(a\hat{t} - \frac{v_{max}-v_{i-1}}{c_{max}}), \ \textrm{if} \  a\hat{t} > \frac{v_{max}-v_{i-1}}{c_{max}} \end{matrix} \right., a \in \mathbb{N}^+$, and define $a_i^* = \min_{a \in \mathbb{N}^+} \{ a: l_{max}^{i}(a) \ge l_i \}$. \\
\textbf{Step 3}: Design a speed curve for virtual platoons between crossroad $i-1$ and crossroad $i$ such that the speed at crossroad $i-1$ is $v_{i-1}$, the speed at crossroad $i$ is larger than or equal to $v_{min}^c$, the acceleration and deceleration do not exceed $c_{max}$ and $d_{max}$ respectively, and the time required to reach crossroad $i$ is exactly $a_i^*\hat{t}$. \\
\textbf{Step 4}: If $i=n$, terminate SCG and the whole speed curve is designed; otherwise, let $i \leftarrow i+1$ and return to Step 2. \par

In SCG, step 2 is aimed at choosing the shortest travel time between crossroads $i-1$ and $i$, such that the travel time is $\hat{t}$ multiples of an integer. Then, step 3 designs the speed curve based on such a travel time constraint. The following proposition provides the condition for the existence of a speed curve satisfying the requirements as stated in step 3.

\begin{prop}\label{prop4}
For a road segment between crossroads $i-1$ and $i$, there exists a speed curve satisfying the requirements in step 3 of the SCG if the platoon interval $\hat{t}$ and the length $l_i$ satisfy the following two conditions:

\begin{align*}
& \hat{t} \ge \frac{v_{max}}{d_{max}} + \frac{v_{min}^c}{c_{max}} \\
& l_i \ge \frac{v_{max}^2}{2d_{max}} + \frac{(v_{min}^c)^2}{2c_{max}}
\end{align*}

\end{prop}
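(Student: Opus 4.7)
The plan is to construct an explicit trapezoidal speed curve of the form accelerate--cruise--decelerate and to verify, using the two stated hypotheses, that its free parameters can be tuned to meet all four requirements of Step~3 simultaneously. More precisely, I would look for a profile that accelerates at the rate $c_{max}$ from the initial speed $v_{i-1}$ to some cruise speed $v_c \in [v_{min}^c, v_{max}]$, cruises at $v_c$ for a nonnegative time $\tau$, and then decelerates at the rate $d_{max}$ from $v_c$ down to the target exit speed $v_{min}^c$ at crossroad~$i$. The two free parameters $(v_c,\tau)$ give exactly the degrees of freedom needed to match the two equality constraints: total time $= a_i^* \hat{t}$ and total distance $= l_i$, while the piecewise-affine shape automatically respects the $c_{max}$/$d_{max}$ bounds.

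Before solving for $(v_c,\tau)$, I would establish a feasibility envelope. By the definition of $a_i^*$ in Step~2 of SCG, the maximum distance reachable in time $a_i^* \hat{t}$ starting from speed $v_{i-1}$ is $l_{max}^{i}(a_i^*) \ge l_i$, so the distance budget is already adequate if we ignore the exit-speed requirement. The role of the hypothesis $l_i \ge \frac{v_{max}^2}{2 d_{max}} + \frac{(v_{min}^c)^2}{2 c_{max}}$ is to guarantee that $l_i$ is large enough to host an obligatory deceleration tail from $v_{max}$ down to $v_{min}^c$ (and a symmetric acceleration allowance), so that pushing $v_c$ towards $v_{max}$ never forces the cumulative distance below $l_i$. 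The role of the hypothesis $\hat{t} \ge \frac{v_{max}}{d_{max}} + \frac{v_{min}^c}{c_{max}}$ is the analogous statement in the time dimension: a single rhythm interval is long enough to carry a full ramp from $v_{max}$ down and back up to $v_{min}^c$, which ensures that the coarse time grid in multiples of $\hat{t}$ is compatible with the exit-speed requirement.

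With the envelope in place, existence of $(v_c, \tau)$ follows from a continuity / intermediate-value argument. Writing the total time and total distance as continuous functions of $(v_c, \tau)$, I would fix the total time at $a_i^* \hat{t}$ by letting $\tau = \tau(v_c)$ absorb the excess, and then sweep $v_c$ from $v_{min}^c$ up to $v_{max}$. At the low end of the sweep the resulting distance is strictly below $l_i$, while at the high end it equals $l_{max}^{i}(a_i^*) \ge l_i$; continuity then delivers a value of $v_c$ and corresponding $\tau \ge 0$ producing distance exactly $l_i$. Degenerate configurations in which the cruise phase collapses ($\tau = 0$, so the profile becomes triangular), or in which $v_{i-1}$ is already near $v_{max}$, are handled by the same continuity argument on the restricted parameter range.

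The main obstacle I anticipate is the bookkeeping around $a_i^*$ and its interaction with the exit-speed constraint. The definition of $a_i^*$ is tailored to a pure accelerate-then-cruise profile that ignores the requirement $v(a_i^*\hat{t}) \ge v_{min}^c$, so it is not immediately clear that the discretized time $a_i^* \hat{t}$ remains sufficient once the mandatory deceleration tail is appended; one must also rule out the possibility that the tail pushes the total distance below $l_i$. This compatibility is exactly what the two stated bounds buy: the $\hat{t}$ condition lets the deceleration tail fit inside a single rhythm interval, and the $l_i$ condition absorbs the distance cost of that tail. Once these two facts are laid out, the intermediate-value existence argument closes the proof cleanly.
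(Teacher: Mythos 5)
Your high-level strategy (a low-dimensional family of kinematically feasible profiles plus an intermediate-value argument on the covered distance) is the same as the paper's, but the specific family you construct does not work, and the gap is at the exact point you wave through: the claim that ``at the low end of the sweep the resulting distance is strictly below $l_i$.'' In your accelerate--cruise--decelerate profile the speed never drops below $v_{min}^c$ (you restrict $v_c \in [v_{min}^c, v_{max}]$ and end the segment at $v_{min}^c$), so every member of your family covers at least $v_{min}^c \, a_i^* \hat{t}$ in the allotted time. Nothing in the two hypotheses bounds $l_i$ from below by this quantity. Concretely, take the paper's own parameter values $v_{max}=15$, $v_{min}^c=12$, $c_{max}=2.5$, $d_{max}=3$, $\hat{t}=9.8$ s, $v_{i-1}=12$, and $l_i = 70$ m: both hypotheses hold ($l_i \ge 66.3$ m), one checks $l_{max}^i(1) \approx 145$ m so $a_i^*=1$, yet any profile that stays at or above $12$ m/s for $9.8$ s covers at least $117.6$ m $> 70$ m. Your sweep therefore never reaches $l_i$, and the IVT argument collapses. (Your upper endpoint is also slightly off: forcing the exit speed to equal $v_{min}^c$ means the maximal distance in your family is $l_{max}^i(a_i^*)$ minus the loss from the mandatory deceleration tail, not $l_{max}^i(a_i^*)$ itself; the exit requirement is only $\ge v_{min}^c$.)

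The missing idea is that the speed requirement $v \ge v_{min}^c$ binds only at the crossroads, not along the interior of the segment, so a slow segment must be realized by a \emph{decelerate--cruise--accelerate} (valley-shaped) profile: slow down from $v_{i-1}$ at rate $d_{max}$ to some interior level $\theta \ge 0$, hold it, then accelerate at $c_{max}$ so as to hit exactly $v_{min}^c$ at time $a_i^*\hat{t}$. This is precisely the paper's one-parameter family $v(t;\theta)$: its minimum distance (at $\theta=0$) is $\frac{v_{i-1}^2}{2d_{max}} + \frac{(v_{min}^c)^2}{2c_{max}}$, which is exactly what the hypothesis $l_i \ge \frac{v_{max}^2}{2d_{max}} + \frac{(v_{min}^c)^2}{2c_{max}}$ is designed to dominate (decelerate to a standstill, then accelerate back up to $v_{min}^c$ -- not, as you write, a tail ``from $v_{max}$ down to $v_{min}^c$''), while the hypothesis on $\hat{t}$ guarantees that this extreme valley fits inside the time window $a_i^*\hat{t} \ge \hat{t}$ with a nonnegative cruise duration; the maximum distance (at $\theta=v_{max}$) is $l_{max}^i(a_i^*) \ge l_i$ by the definition of $a_i^*$. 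Continuity of the distance in $\theta$ then gives the result. If you replace your trapezoidal family by such a family allowing interior speeds below $v_{min}^c$ (and relax the terminal speed to ``$\ge v_{min}^c$''), your IVT argument goes through essentially as the paper's does.
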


\begin{proof}
See Appendix A.
\end{proof}
\par

Proposition \ref{prop4} provides a condition for the existence of an eligible speed curve, and this condition is not difficult to meet; for example, considering a case where $v_{max} = 15 \ m/s$, $v_{min}^c = 12 \ m/s$, $c_{max} = 2.5 m/s^2$, and $d_{max} = 3  \ m/s^2$, we find that $\hat{t} \ge \frac{v_{max}}{d_{max}} + \frac{v_{min}^c}{c_{max}} = 9.8 s$ and $l_i \ge \frac{v_{max}^2}{2d_{max}} + \frac{(v_{min}^c)^2}{2c_{max}} = 66.3 m$ can guarantee the existence of an eligible speed curve. Furthermore, the constructive proof of Proposition \ref{prop4} actually provides a feasible solution to step 3 in SCG. We note that such a construction may not be a unique solution to feasible SCG.

\section{Numerical tests} \label{Nume_sec}

In this section, we conduct comprehensive numerical studies to verify the effectiveness of our RC, in terms of network travel efficiency. All tests below are conducted on a $6 \times 6$ one-way grid network with homogeneous block sizes, where each street owns two lanes, and the distance between two adjacent crossroads is 150 m. The preset speed for all CAV platoons is $15 \ m/s$ ($54 \ km/h$), implying that the platoon travel time between two adjacent crossroads is $10 \ s$; by Proposition \ref{prop2}, to guarantee conflict-free environment, we set the network rhythm (as well as routing interval) to be $10 \ s$, i.e., $\hat{t} = 10 \ s$. In the virtual platoon, the minimum headway for two CAVs is set as 0.5 s. By Proposition \ref{prop2}, all virtual platoons must pass through a fixed point in $5 \ s$, so each platoon contains at most $20$ vehicles; to reserve some buffer space for robustness at crossroads, we set the capacity for platoons as 16 vehicles. Furthermore, we set the virtual platoon size on road segments between two crossroads a bit larger (i.e., 18 vehicles) because there is no risk for vertical collision on these segments, and vehicles can better utilize the additional capacity to enter or leave the network at junctions.\par

All tests and algorithms are coded with Matlab 2017b on a computer with an Intel Core i7-4790 CPU @ 3.60 GHz and a RAM of 16.0 GB. The AA-SPR and AA-MPR are solved by the off-the-shelf linear program solver in Matlab 2017b, where in AA-MPR the allowable detour time is set to be 40 seconds compared to the free-flow shortest-path travel time. The testing horizons for all scenarios are set to be 30 minutes.  

\subsection{Elementary tests} \label{EleTest_subsec}

In the elementary tests, we aim to present the performances of our proposed RC framework under different demand patterns. Specifically, the demand patterns vary in two dimensions: spatially, i.e., the demand distributions of different O-D pairs could be different; and temporally, i.e., the arrivals of CAVs could be stable or fluctuate with time $\footnote{The stable arrivals in scenarios 1-3 follow Poisson’s processes at each entrance and junction, and the demands fluctuate with multipliers ranging from 0.5 to 1.5 in scenarios 4-6.}$ To express such varieties, we design six scenarios for our tests, as shown in Figure \ref{Scenario_fig} below.

\begin{figure}[!ht]
	\centering
	\includegraphics[width=0.8\textwidth]{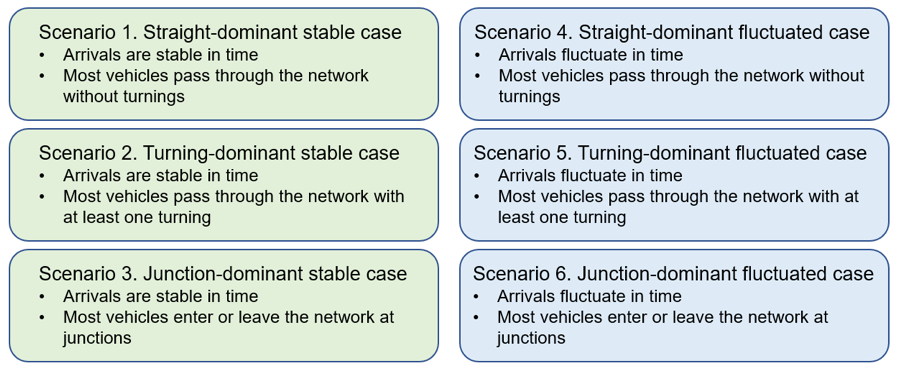}
	\caption[]{Six test scenarios}
	\label{Scenario_fig}
\end{figure}
\par

Figure \ref{EleTestResult_1} provides the results for the average trip delay with total demand rate in all six scenarios. Here, we note that the delay is composed of two parts: 1) the delay at entrances or junctions when entering the network, and 2) the additional travel time on the non-shortest paths $\footnote{We do not consider the deceleration/acceleration of crossroads turning for the delay, as turning movements require deceleration after all.}$. Moreover, the delays at entrances or junctions also consist of two parts: 1) the elementary part imposed by the incoming vehicles waiting for a virtual platoon to put themselves in, and this delay can be seen as roughly half of the network rhythm, i.e., $\frac{1}{2}\hat{t} = 5 \ s$; and ii) the additional delay caused by excessive demands. That is, when the demand is high, the incoming vehicles are very likely to wait for several rounds to enter a virtual platoon. All vehicles exhibit a basic average delay of $\hat{t} = 5 \ s$ at entrances, because routing only happens every 10 s. Figure \ref{EleTestResult_2} shows the average speed in all given scenarios, defined by the total travel distance divided by total travel time (including waiting at entrances/junctions) on the network. \par

From Figures \ref{EleTestResult_1}-\ref{EleTestResult_2}, we can draw the following observations. 

\begin{figure}[!ht]
	\centering
	\subfloat[][Scenario 1]{\includegraphics[width=0.45\textwidth]{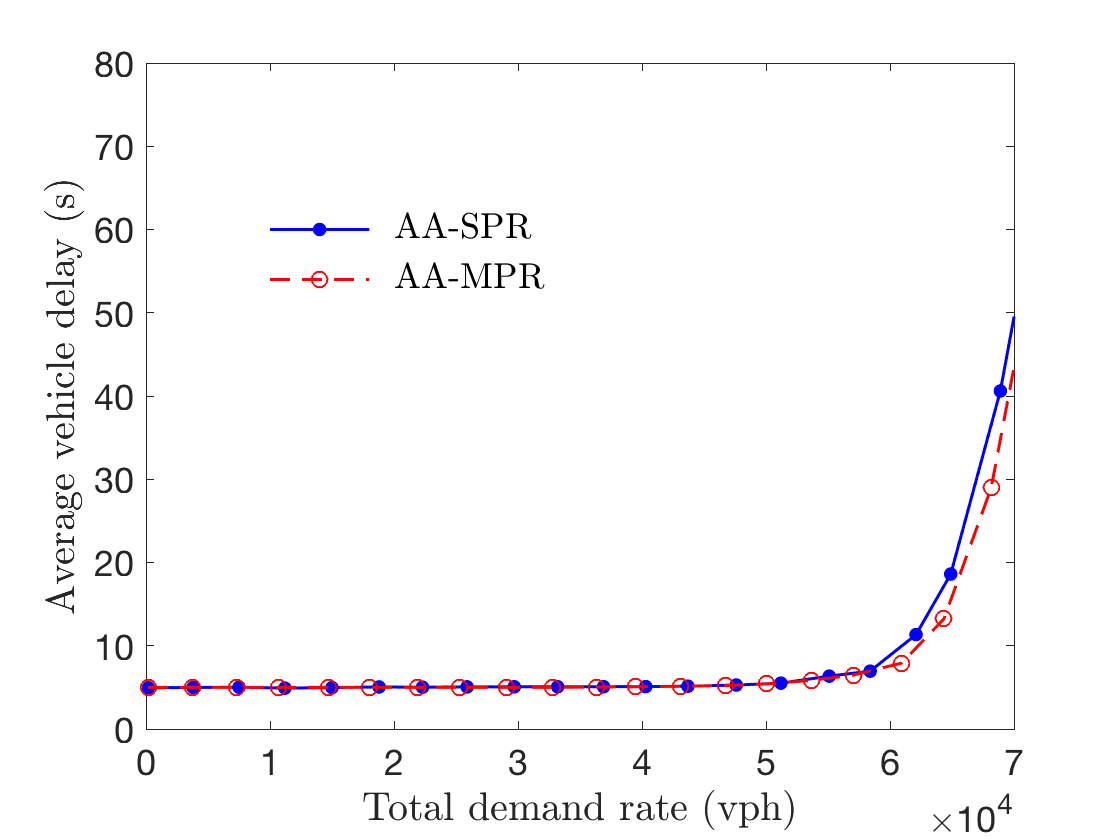}} \hspace{1em}
	\subfloat[][Scenario 4]{\includegraphics[width=0.45\textwidth]{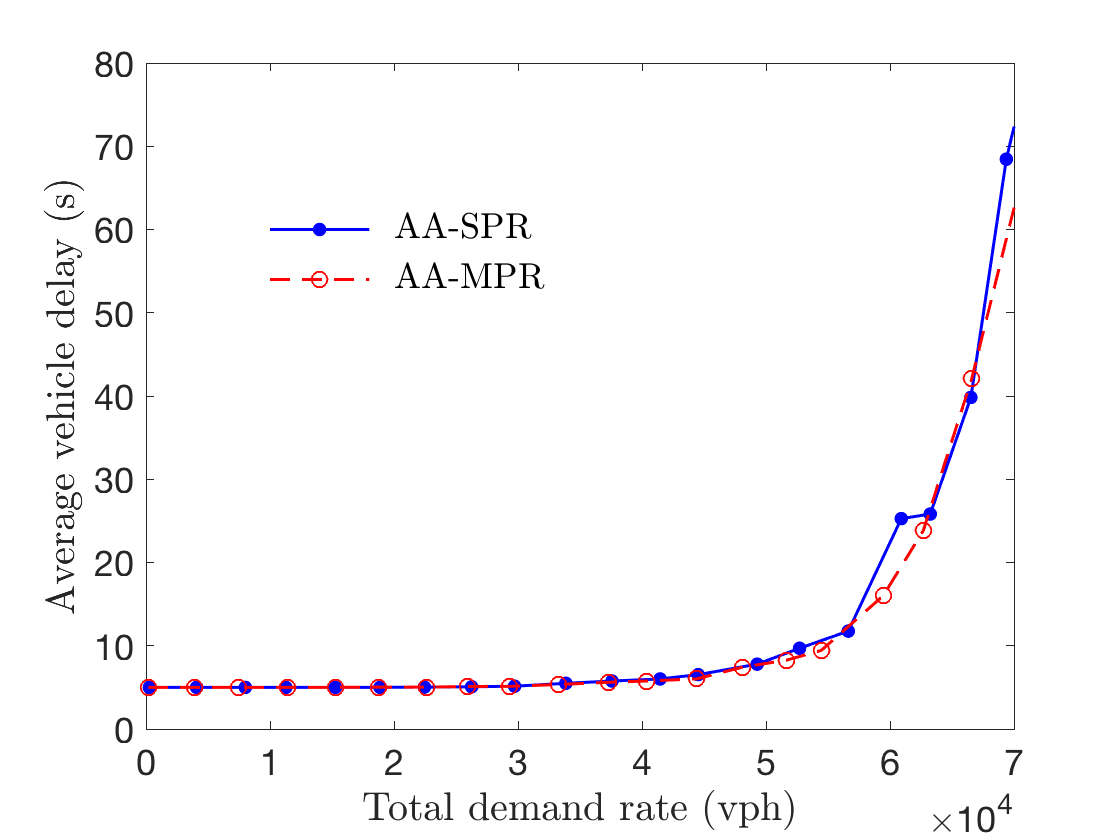}} \\
	\subfloat[][Scenario 2]{\includegraphics[width=0.45\textwidth]{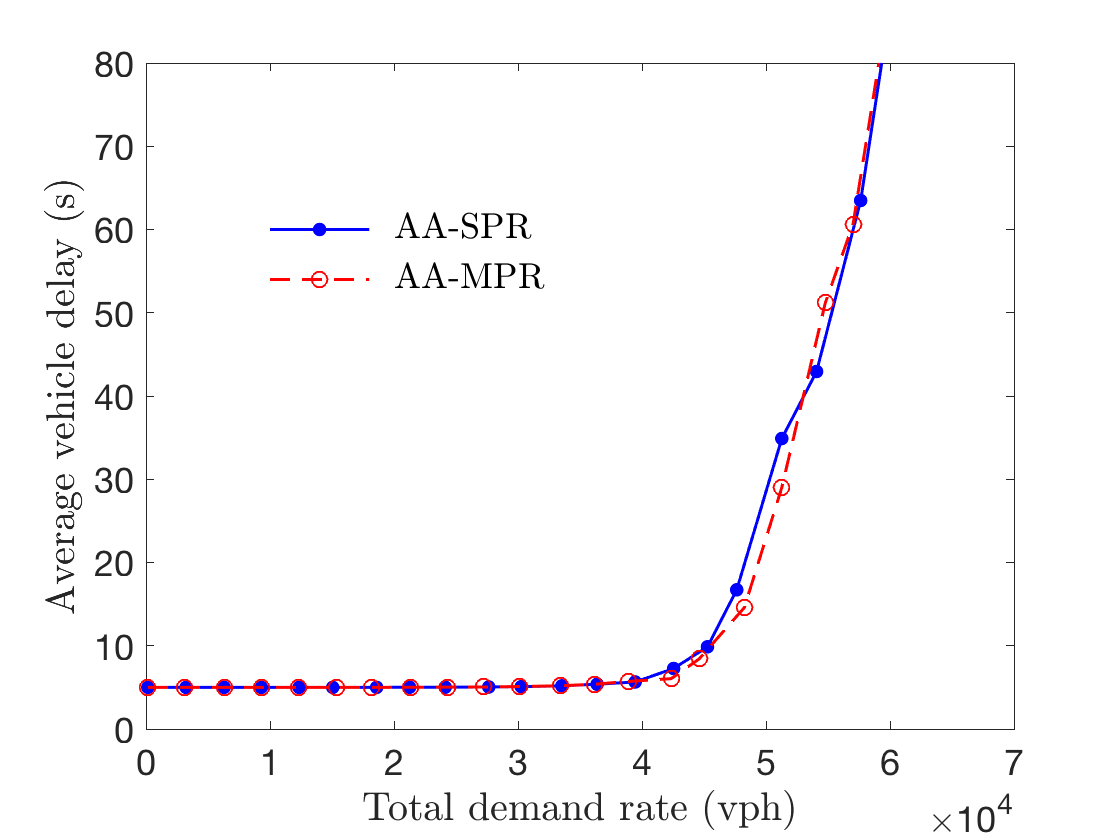}} \hspace{1em}
	\subfloat[][Scenario 5]{\includegraphics[width=0.45\textwidth]{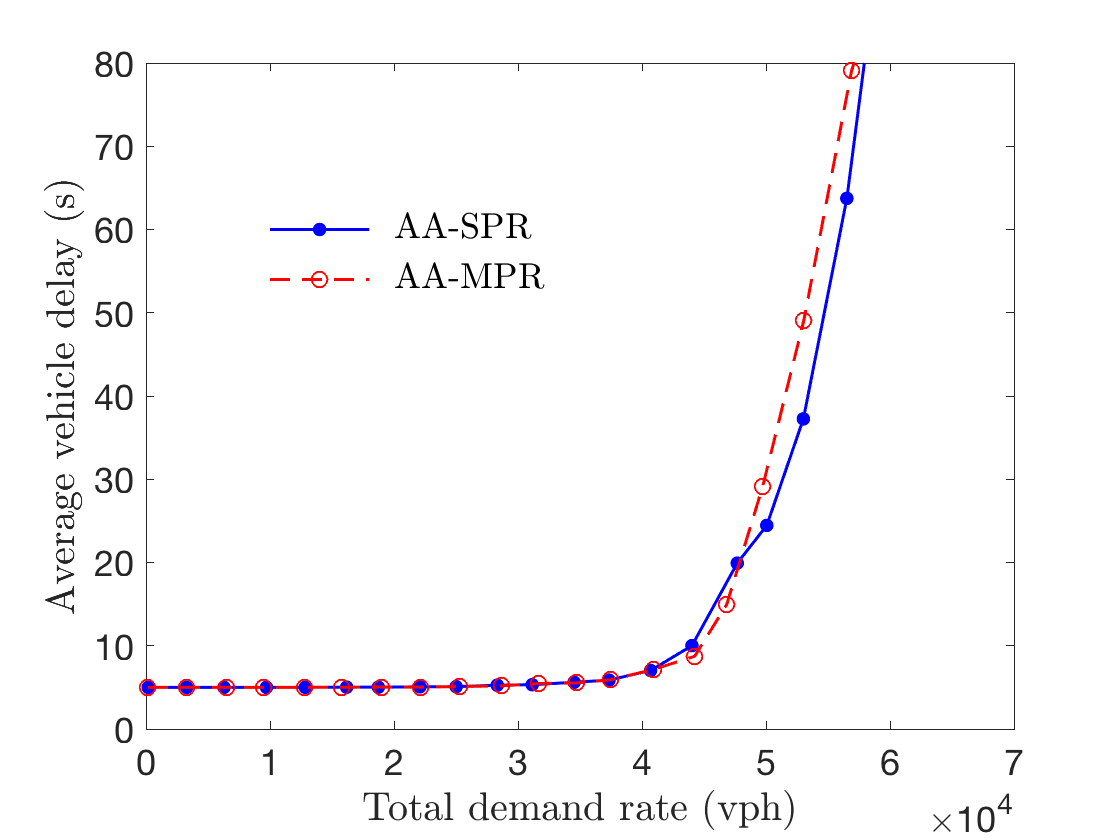}} \\
	\subfloat[][Scenario 3]{\includegraphics[width=0.45\textwidth]{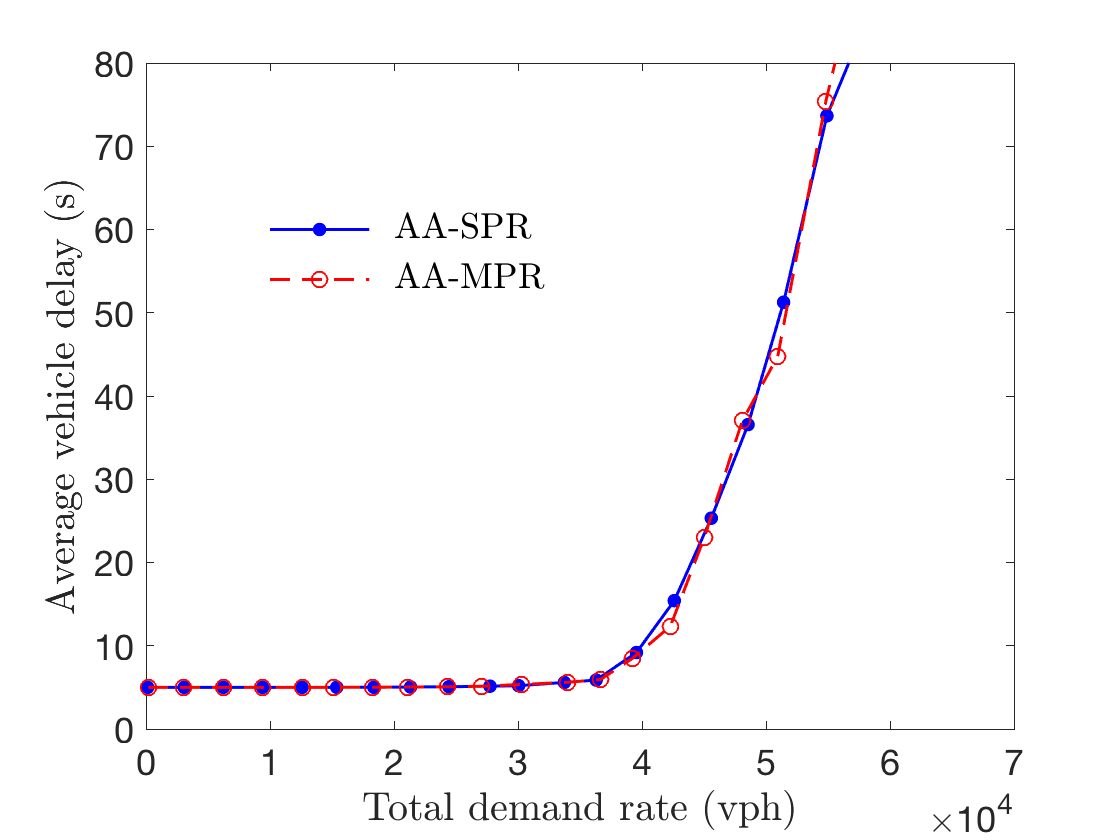}} \hspace{1em}
	\subfloat[][Scenario 6]{\includegraphics[width=0.45\textwidth]{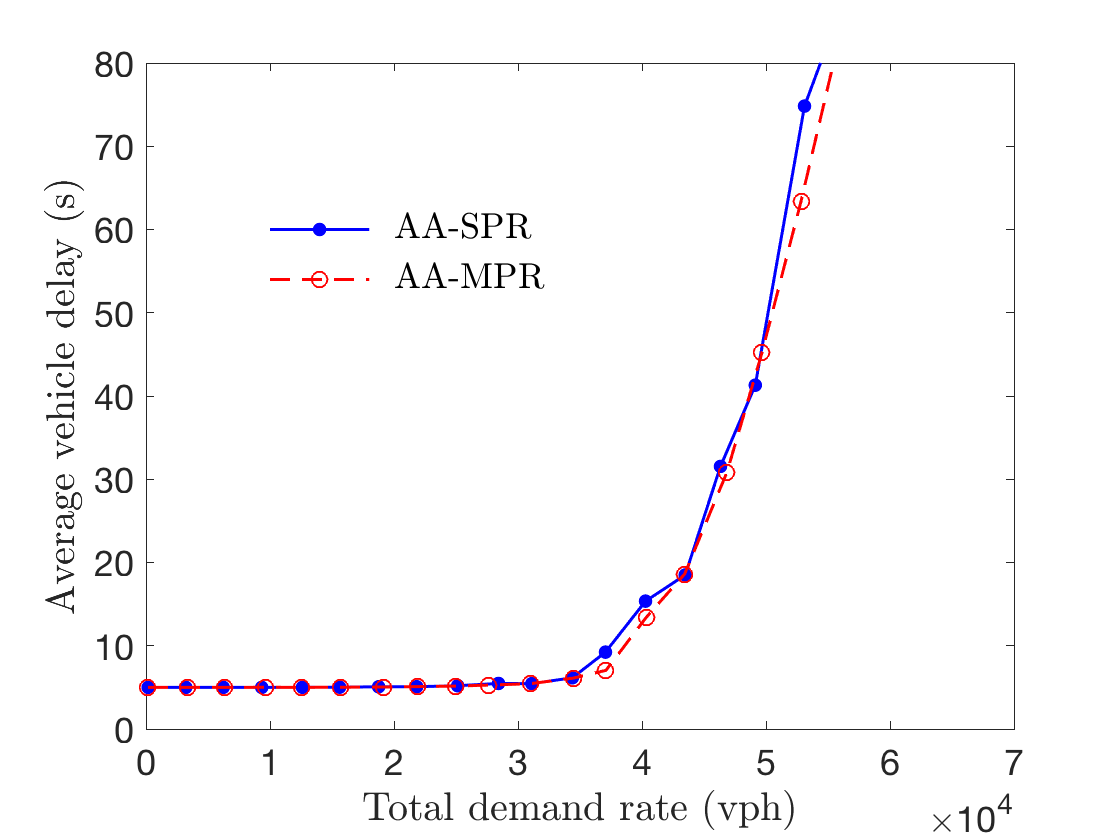}}
	\caption[]{Average vehicle delays in six scenarios}
	\label{EleTestResult_1}
\end{figure}

\begin{figure}[!ht]
	\centering
	\subfloat[][Scenario 1]{\includegraphics[width=0.45\textwidth]{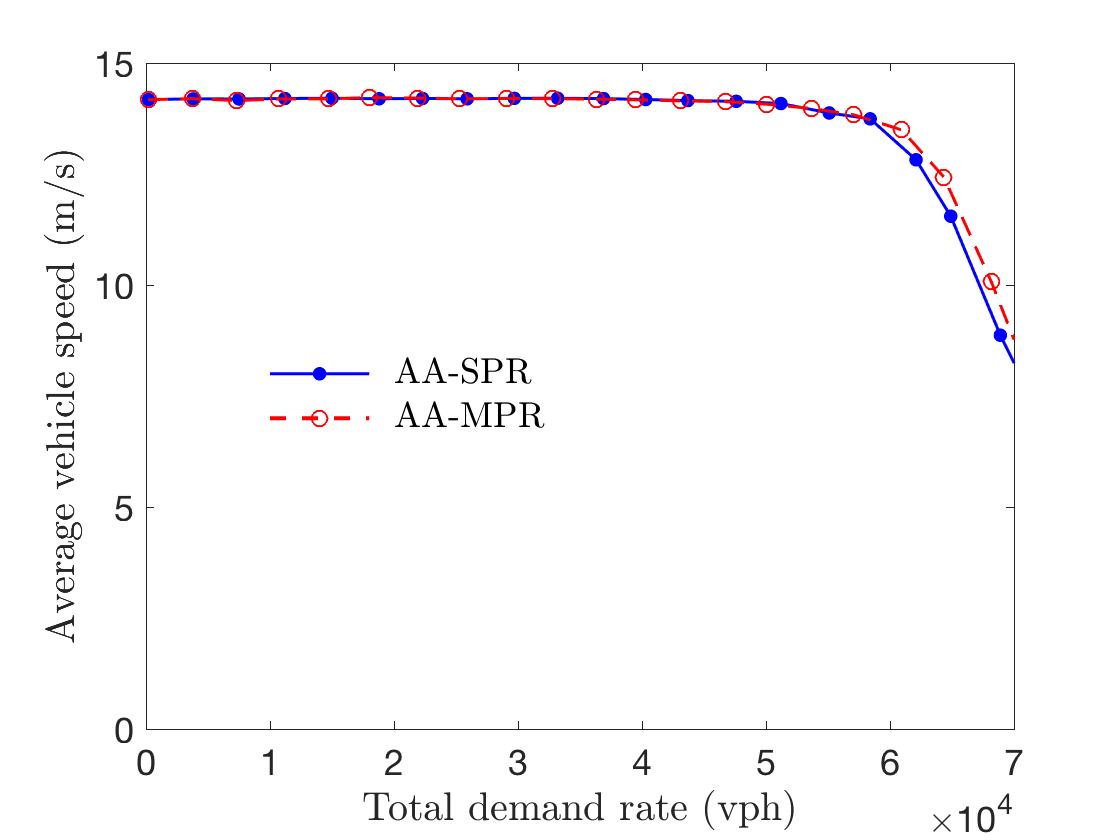}} \hspace{1em}
	\subfloat[][Scenario 4]{\includegraphics[width=0.45\textwidth]{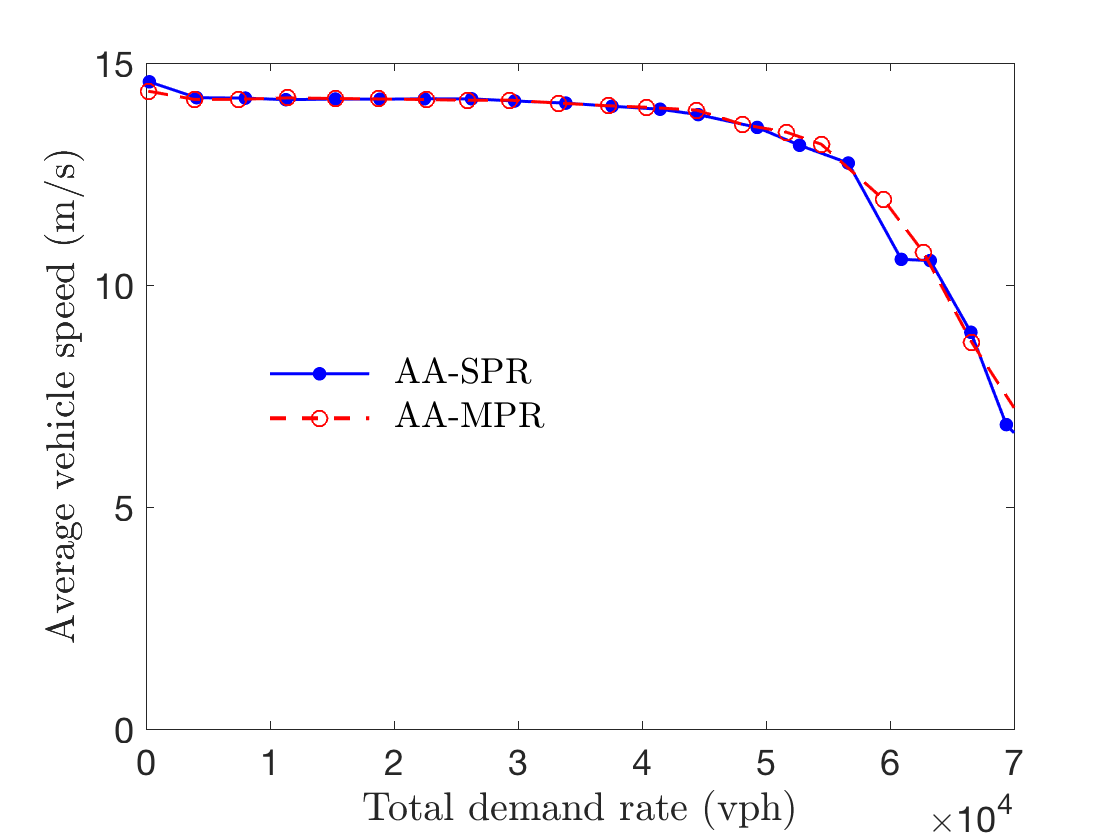}} \\
	\subfloat[][Scenario 2]{\includegraphics[width=0.45\textwidth]{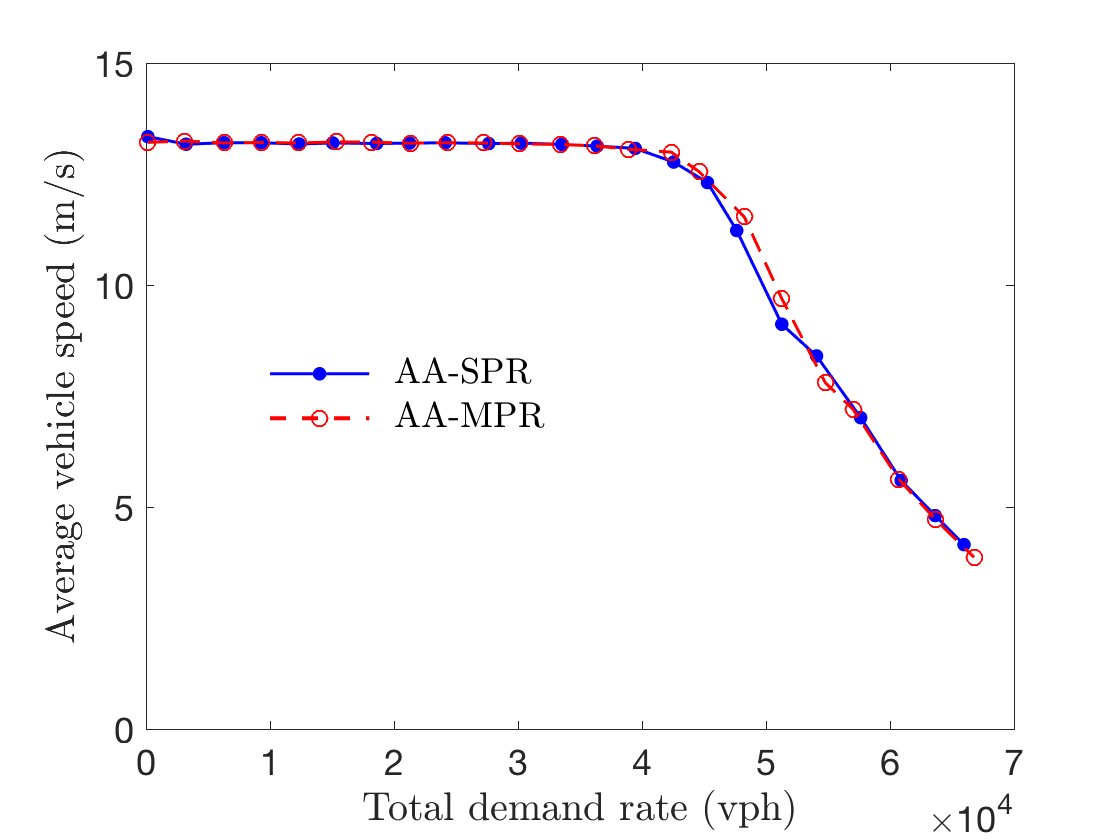}} \hspace{1em}
	\subfloat[][Scenario 5]{\includegraphics[width=0.45\textwidth]{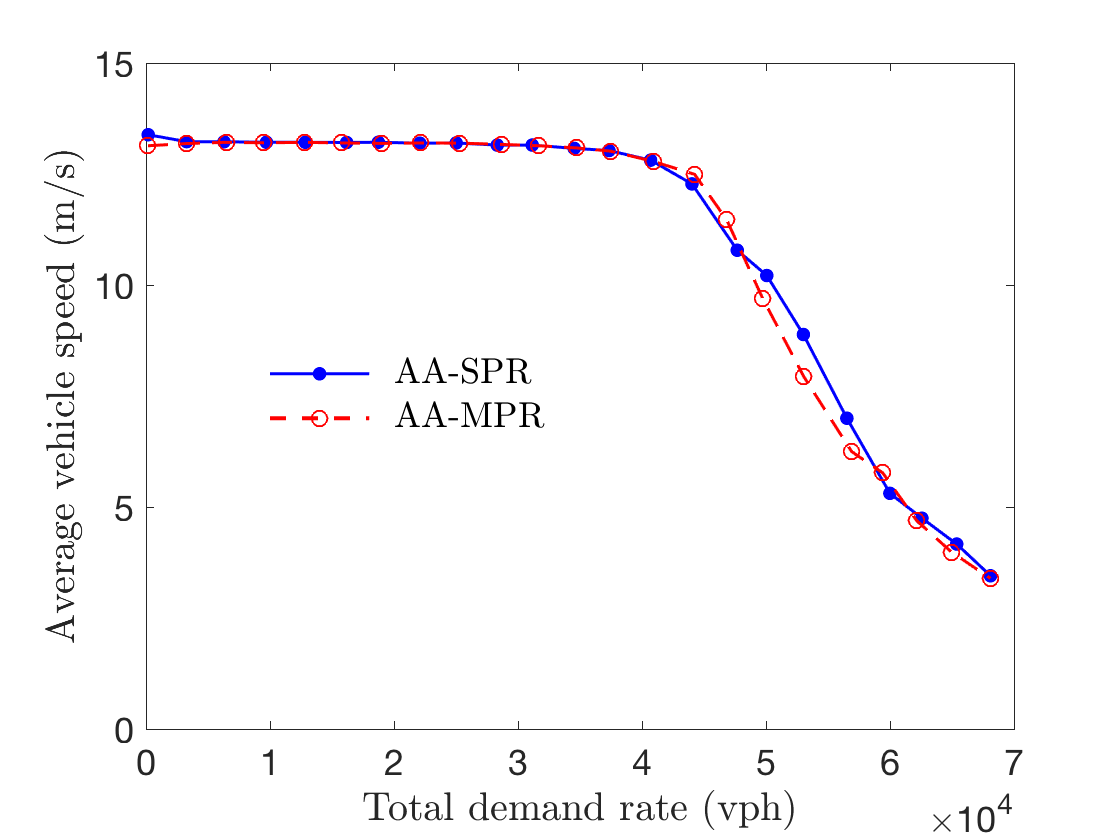}} \\
	\subfloat[][Scenario 3]{\includegraphics[width=0.45\textwidth]{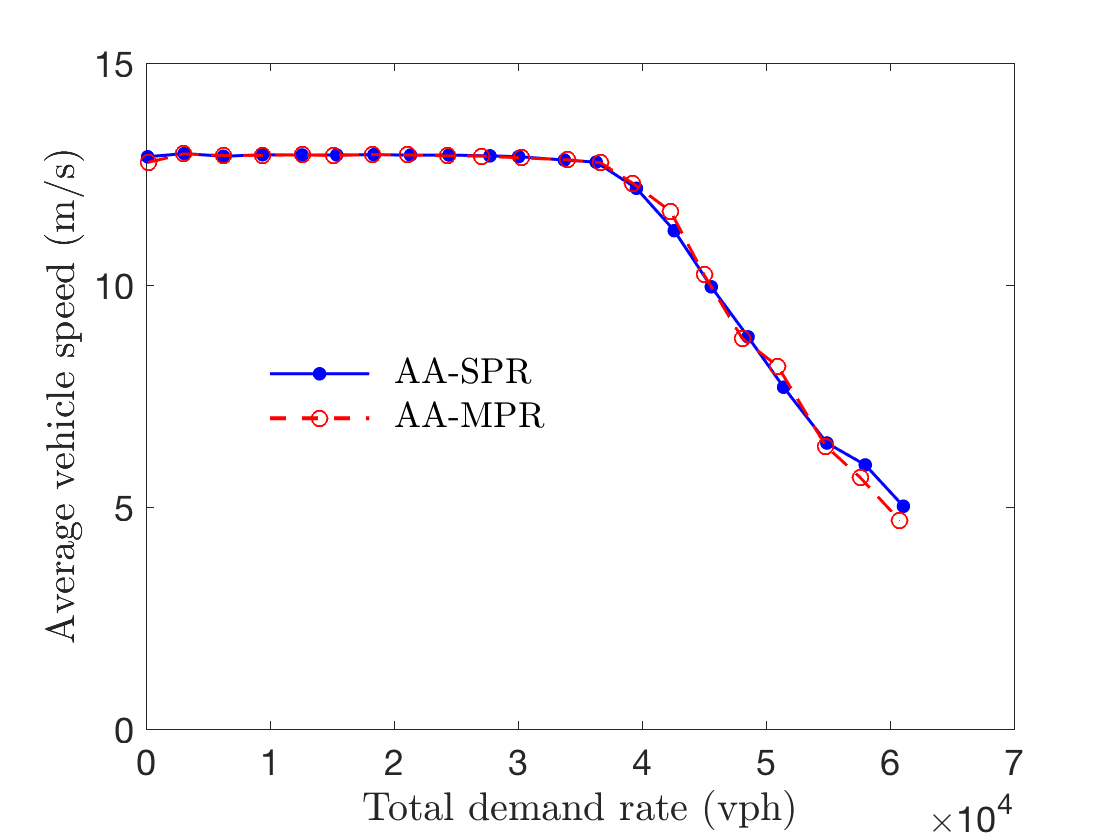}} \hspace{1em}
	\subfloat[][Scenario 6]{\includegraphics[width=0.45\textwidth]{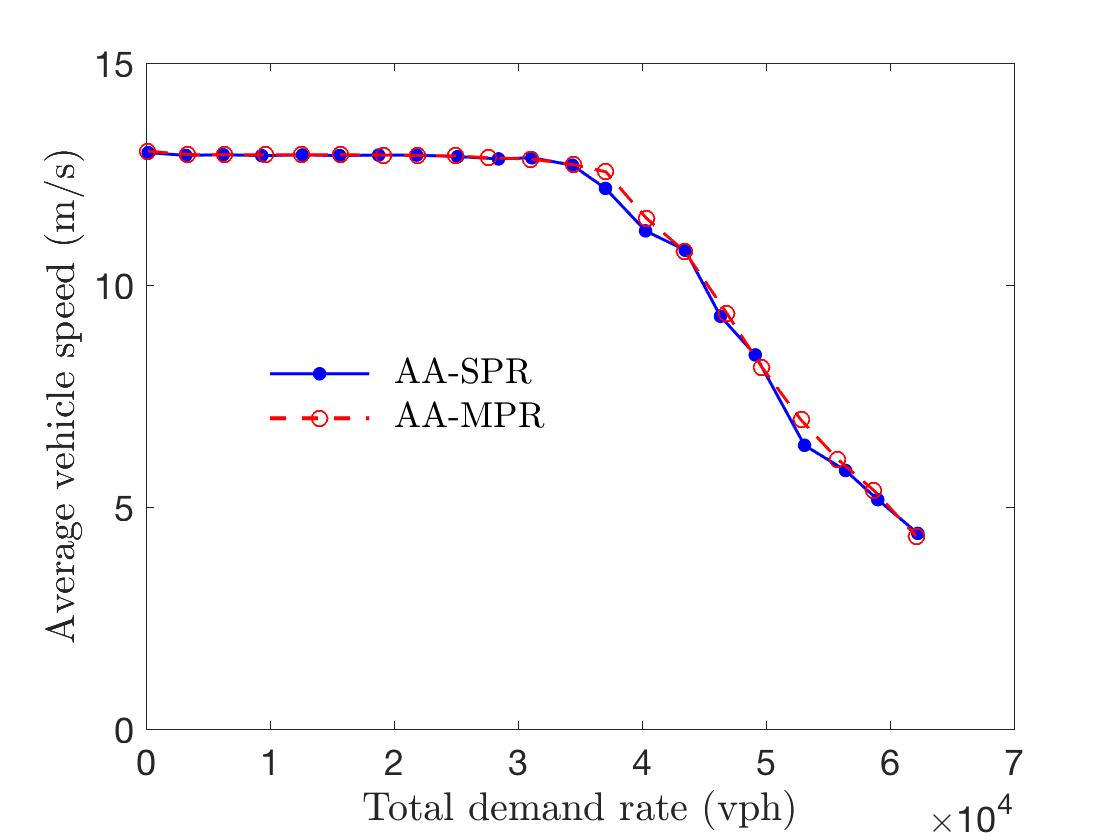}}
	\caption[]{Average vehicle speeds in six scenarios}
	\label{EleTestResult_2}
\end{figure}

\begin{itemize}
    \item The average vehicle delay through the entire network is almost negligible when the travel demand is not too high (approximately $5 s$), and the average vehicle speed approaches the maximum allowable speed (approximately $13 \ m/s$ or $47-50 \ km/h$). This result is within expectations, as Proposition \ref{prop2} indicates that under the proposed RC, vehicles only decelerate when turning at crossroads.

    \item The proposed RC framework can handle massive vehicle demands. Taking scenario 1 as an example, we see that when the total vehicle demand approaches 60,000 vehicles per hour, the average vehicle delay is only approximately $20 \ s$. The large service capacity can be partially explained by the fact that our proposed framework exhibits no stops or queuing once vehicles enter the network. However, in scenarios 2, 3, 5, and 6, we see that the service capacity of the network is undermined as compared to scenarios 1 and 4, because in these scenarios, the proportion of turning movements is increased, thereby reducing the network-wise throughput.
    
     \item We see that the RC framework is robust to fluctuations of vehicle arrivals, i.e., the fluctuations of demands do not distinctively deteriorate the average delay or average speed (see the comparisons between scenarios 1-3 and scenarios 4-6). 
     
     \item As observed in Table \ref{Compt}, both AA-SPR and AA-MPR are efficient in solving the routing models even when more than 1,000 vehicles are ready for entry at entrances and junctions. This confirms the applicability of our proposed RC in the real time implementation. In particular, AA-SPR requires much fewer computational efforts than AA-MPR when the problem size is sufficiently large.
     
     \item Lastly, regarding the performances of AA-SPR and AA-MPR, we observe that AA-MPR has some advantages over AA-SPR in general, but such advantages are quite limited; AA-MPR could sometimes perform even worse than AA-SPR (Scenario 5). One explanation for this phenomenon is that in our designed scenarios, traffic demands are distributed quite evenly in spatial dimension, and that compromises the potential of AA-MPR for further reducing vehicle delay under heavy traffic. To observe the potential of AA-MPR, we set up an additional scenario that is modified from Scenario 1 by adjusting the relative demand magnitudes on different arterials to make room for detours; meanwhile, in this test we set the allowable detour time as 60 seconds. Table \ref{Compt_2} presents the results under several demand levels; it suggests that AA-MPR indeed show distinctive advantages over AA-SPR in heavy traffic cases.

\end{itemize}

\begin{table}[!ht]
  \centering
  \caption{Average computational times of AA-SPR and AA-MPR}
  \label{Compt}
  \small
  \begin{tabular}{ c  c  c }
   \hline
   Number of vehicles & AA-SPR (sec/veh) & AA-MPR (sec/veh) \\
		\hline
		$<$200	& $<$0.1 & $<$0.1 \\
		200$\sim$500   & $<$0.1 & 0.2  \\
		500$\sim$1,000   & 0.2 & 0.6  \\
		1,000$\sim$2,000   & 0.4 & 3.0  \\
   \hline
  \end{tabular}
\end{table}

\begin{table}[!ht]
  \centering
  \caption{Average vehicle delay comparisons of AA-SPR and AA-MPR in the additional scenario}
  \label{Compt_2}
  \small
  \begin{tabular}{ c  c  c }
   \hline
  Total demand rate (vph) & AA-SPR (sec/veh) & AA-MPR (sec/veh) \\
		\hline
		40,000	& 5.27 & 5.32 \\
		45,000   & 5.63 & 6.03  \\
		50,000   & 11.96 & 7.77  \\
		55,000   & 37.27 & 16.56  \\
		60,000   & 65.26 & 49.67 \\
   \hline
  \end{tabular}
\end{table}

\subsection{Tests on rhythm length choice} \label{Sensitivity_subsec}

This subsection provides some results on choosing network rhythm length (i.e., the platoon size) with the trade-off between entrance delay and traffic capacity, discussed in Section \ref{Buffer_subsec}. In the tests, we set three available network rhythm lengths with detailed information shown in Table \ref{Net_rhythm_table}. The buffer reserved in each platoon occupies the space of 2 vehicles in both the head and the tail. Therefore, for a virtual platoon with the size of $N$ vehicles, only $N-4$ vehicles can be effectively contained. 

\begin{table}[!ht]
  \centering
  \caption{Information on available rhythm lengths}
  \label{Net_rhythm_table}
  \small
  \begin{tabular}{ c  c  c }
   \hline
   Rhythm length (sec) & Platoon size (No. Vehicles) & Valid platoon size (No. Vehicles) \\
		\hline
		10	& 20 & 16 \\
		5   & 10 & 6  \\
		$10/3$   & 6 & 2  \\
   \hline
  \end{tabular}
\end{table}
\par

Figure \ref{Sensitivity_fig} compares average vehicle delays under three different network rhythm lengths; the test scenario is chosen as Scenario 1 (i.e., the straight-dominant stable cases). As illustrated, in light traffic cases, setting the rhythm length as small as possible is beneficial for reducing delays at entrances and junctions; but in heavy traffic cases, only a relatively large rhythm length can maintain an acceptable traffic efficiency. Specifically, when the demand rate is no more than 10,000 vph, setting the rhythm length as $10/3$ seconds is optimal; when the demand rate is between 10,000 and 35,000 vph, a rhythm length of 5 seconds is optimal; and when the demand rate is above 35,000 vph, we should choose the rhythm length as 10 seconds for minimizing delay. The results are in accordance with the discussions and insights proposed in Section \ref{Buffer_subsec}.

\begin{figure}[!ht]
	\centering
	\includegraphics[width=0.6\textwidth]{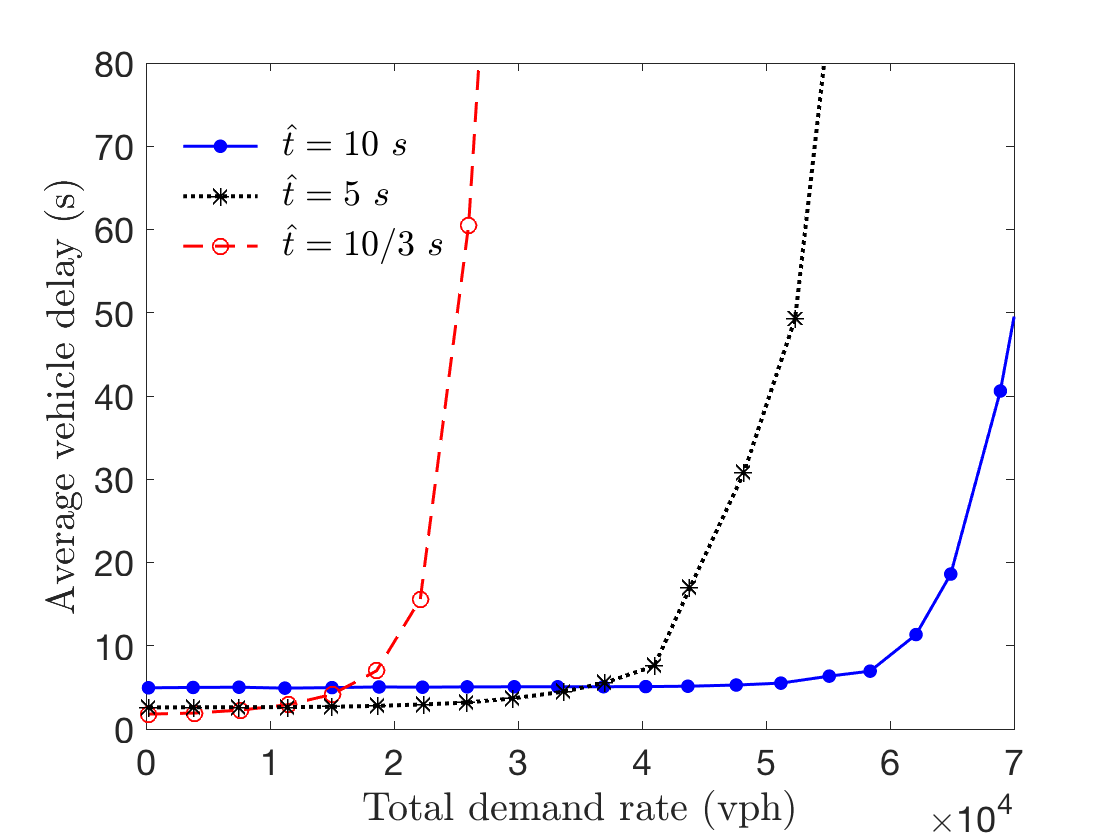}
	\caption[]{Average vehicle delays under different network rhythms}
	\label{Sensitivity_fig}
\end{figure}

\subsection{Comparative tests} \label{ComTest_subsec}

In this section we compare our RC with some other control methods in literature. The network rhythm length is chosen as a fixed 10 seconds (so the average vehicle delay in light traffic cases will be overestimated to some extent). The routing method is chosen as AA-SPR, so in the following we abbreviate our method as RC-SPR. Two well-known benchmark methods are implemented for comparison purposes, introduced as follows: \par

\begin{itemize}
    \item Max-pressure control (MP). This is a signalized control scheme proposed by Varaiya (2013). MP controls a network of signalized intersections, and is proven to maximize the network throughput under a point queue assumption without limit on storage capacity. Therefore, we choose it as a benchmark to evaluate the capacity of our proposed RC. In implementing MP, we set the length of one time slot in MP control as $5 \ s$; the impact of acceleration and deceleration is ignored. We realize two versions of MPs: one can only utilize the shortest paths, abbreviated as MP-1, and the other can dynamically adjust each vehicle's routing choice when approaching all diverging points based on the dynamically-updated travel time information (similar to the idea in Chai et al. (2017)), abbreviated as MP-R below.
    
    \item First-come-first-serve (FCFS) intersection control with real-time routing (Hausknecht et al.,2011). This is a non-signalized control scheme that guarantees conflict-free conditions at intersections by assigning non-conflicting space-time trajectories to approaching vehicles following FCFS rule, and the routing of a vehicle is dynamically updated according to the on-time path travel time information. In the following, this method is abbreviated as FCFS-R.
\end{itemize}

The simulations of benchmark methods are conducted under a cell-transmission-type environment. For fairness consideration, all related parameters (including maximum speed, saturation flow rate and safety buffer) are set as the same as in the RC. We choose scenarios 1-3 (the stable cases) for the comparative tests. The results are shown in Figure \ref{ComTestResult_1}, including the information of average vehicle delay and vehicle throughput (vehicle throughput in the tests is defined as the total number of vehicles that reach their destinations within the simulation horizon, i.e., 0-30 minutes). Similar to the above tests, the vehicle delays at entrances and junctions in RC are incorporated into the delay calculation. Table \ref{StdD} shows the standard deviation of vehicle delay. Observations are drawn as follows:

\begin{figure}[!ht]
	\centering
	\subfloat[][Scenario 1: vehicle delay]{\includegraphics[width=0.45\textwidth]{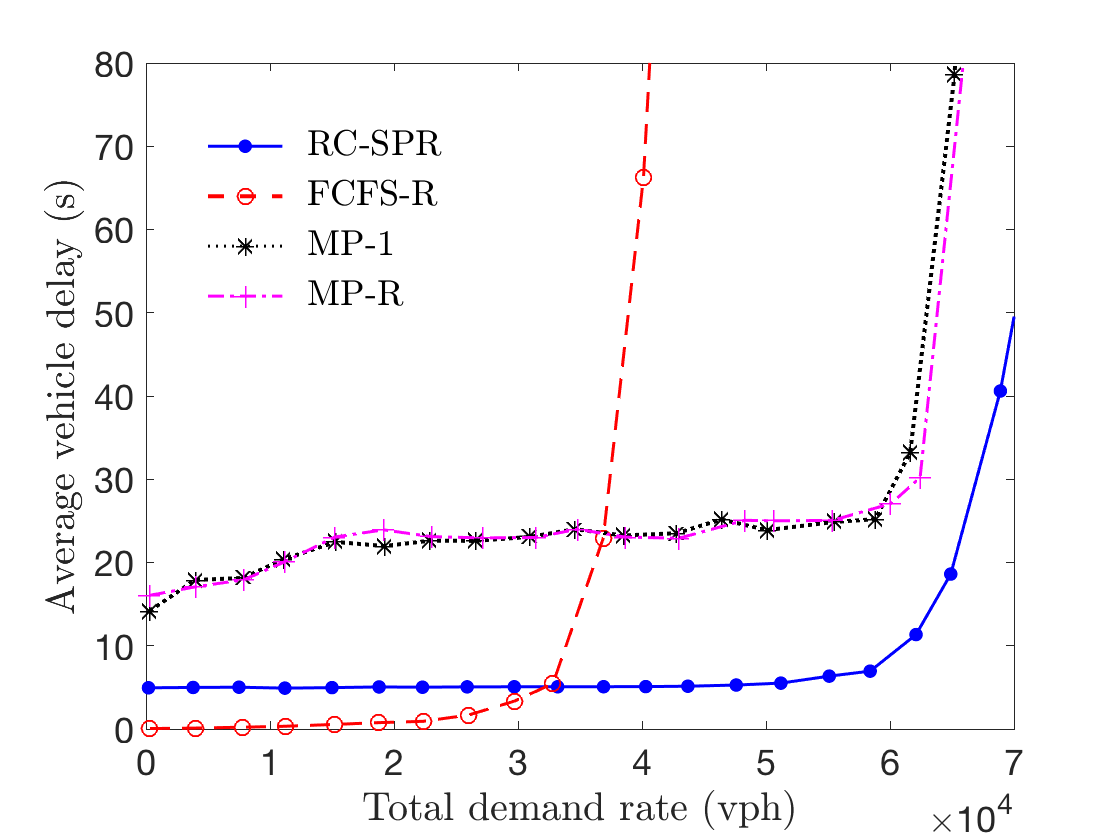}} \hspace{1em}
	\subfloat[][Scenario 1: vehicle throughput ]{\includegraphics[width=0.45\textwidth]{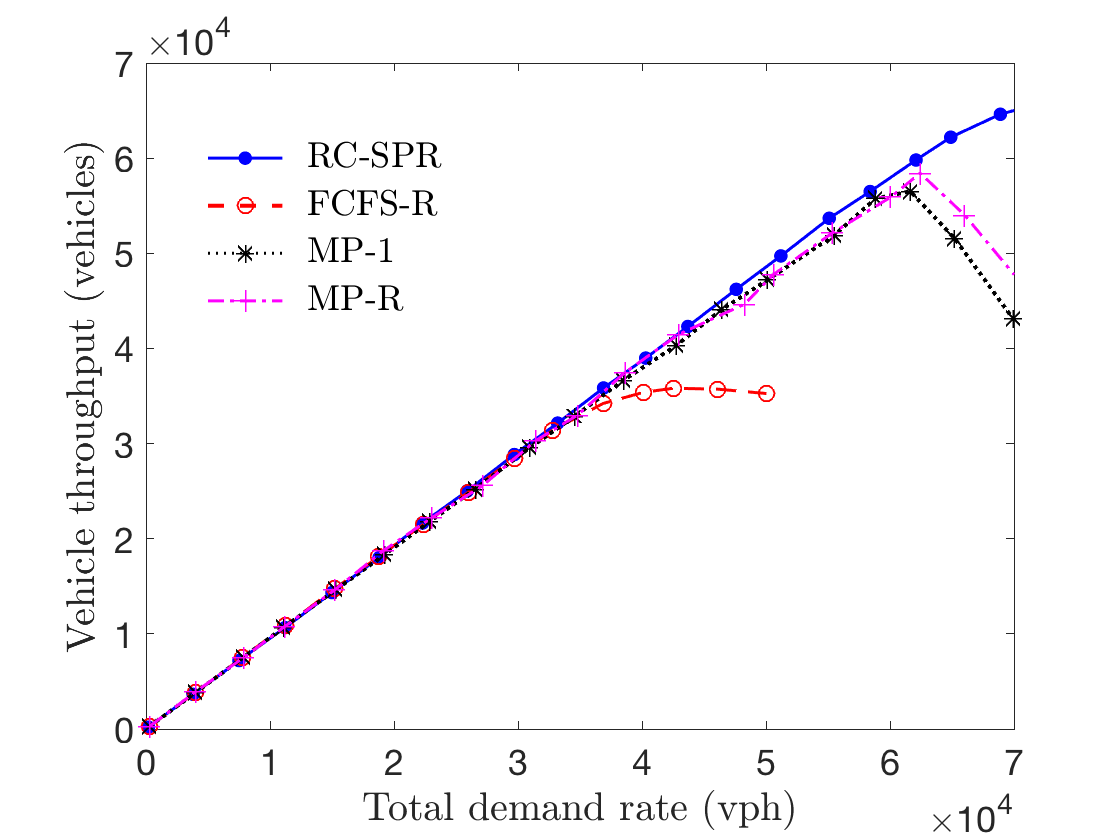}} \\
	\subfloat[][Scenario 2: vehicle delay]{\includegraphics[width=0.45\textwidth]{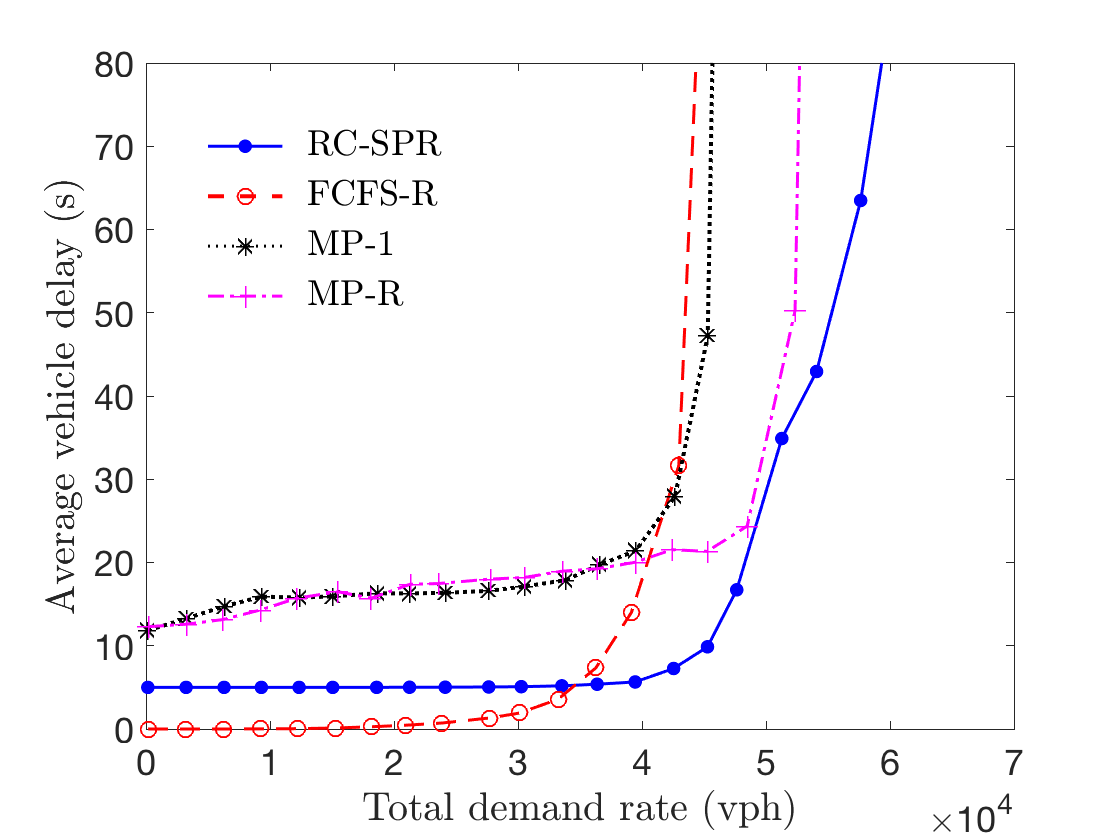}} \hspace{1em}
	\subfloat[][Scenario 2: vehicle throughput]{\includegraphics[width=0.45\textwidth]{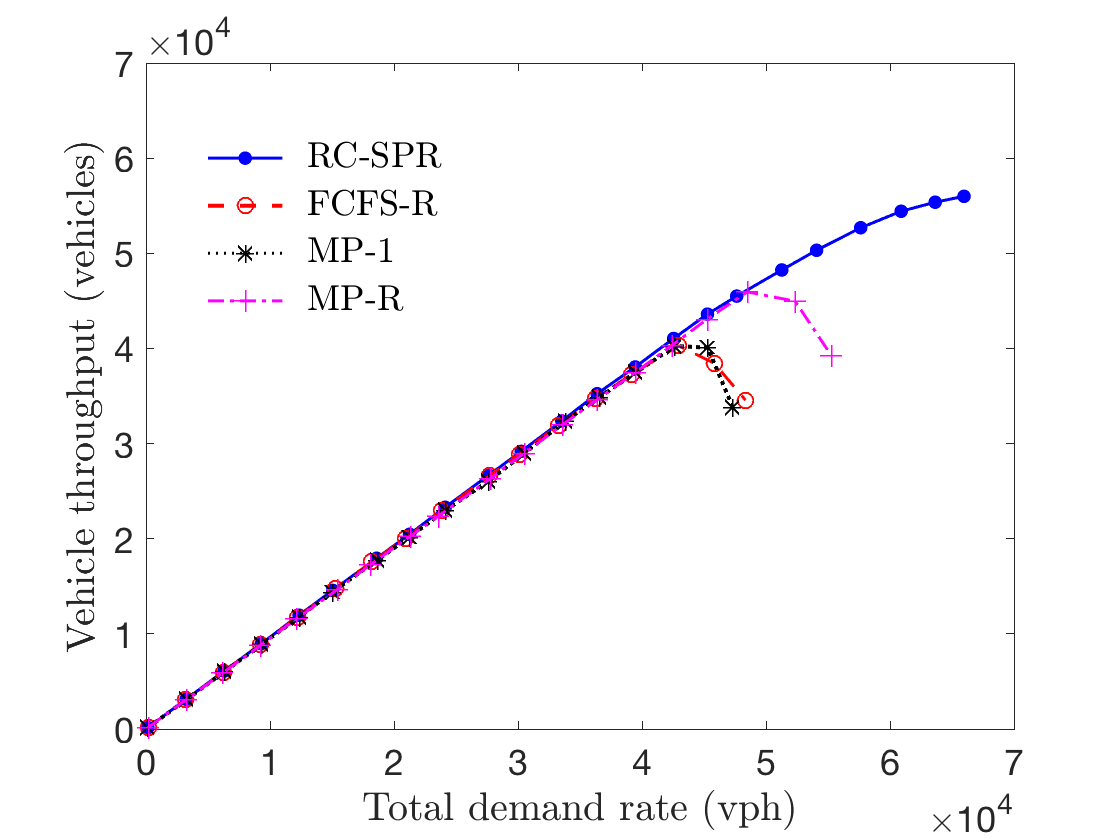}} \\
	\subfloat[][Scenario 3: vehicle delay]{\includegraphics[width=0.45\textwidth]{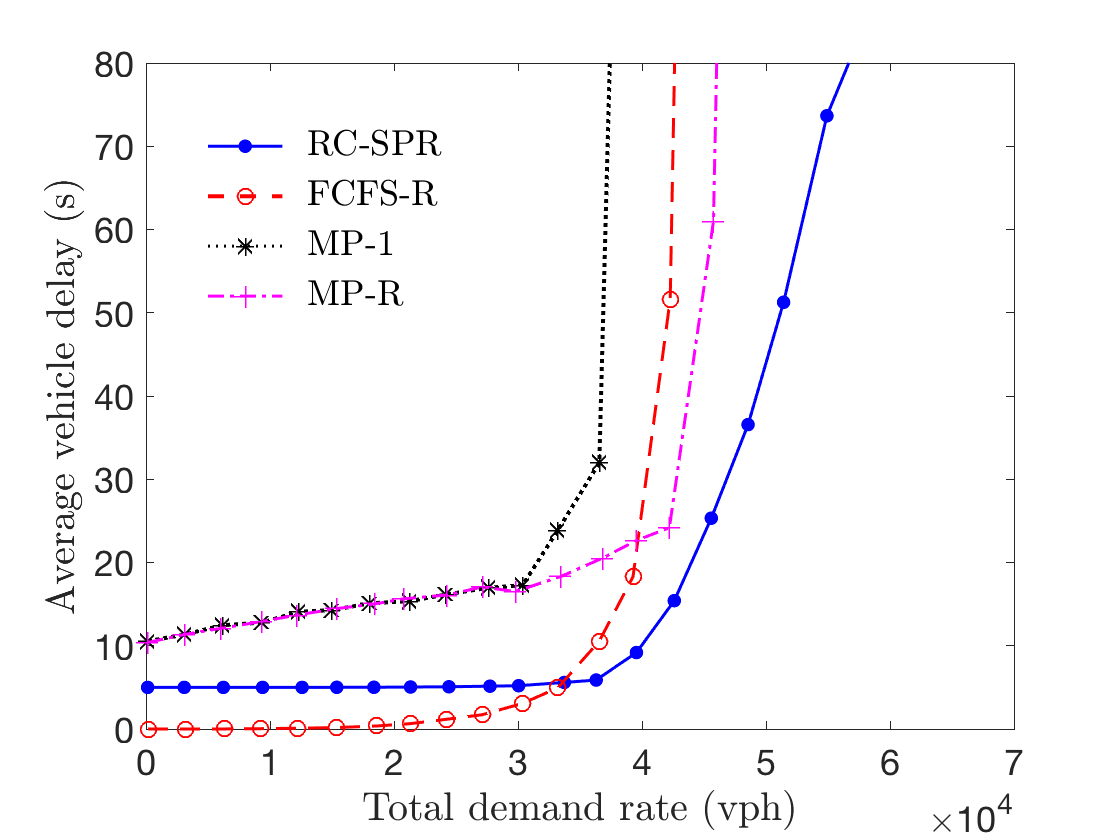}} \hspace{1em}
	\subfloat[][Scenario 3: vehicle throughput]{\includegraphics[width=0.45\textwidth]{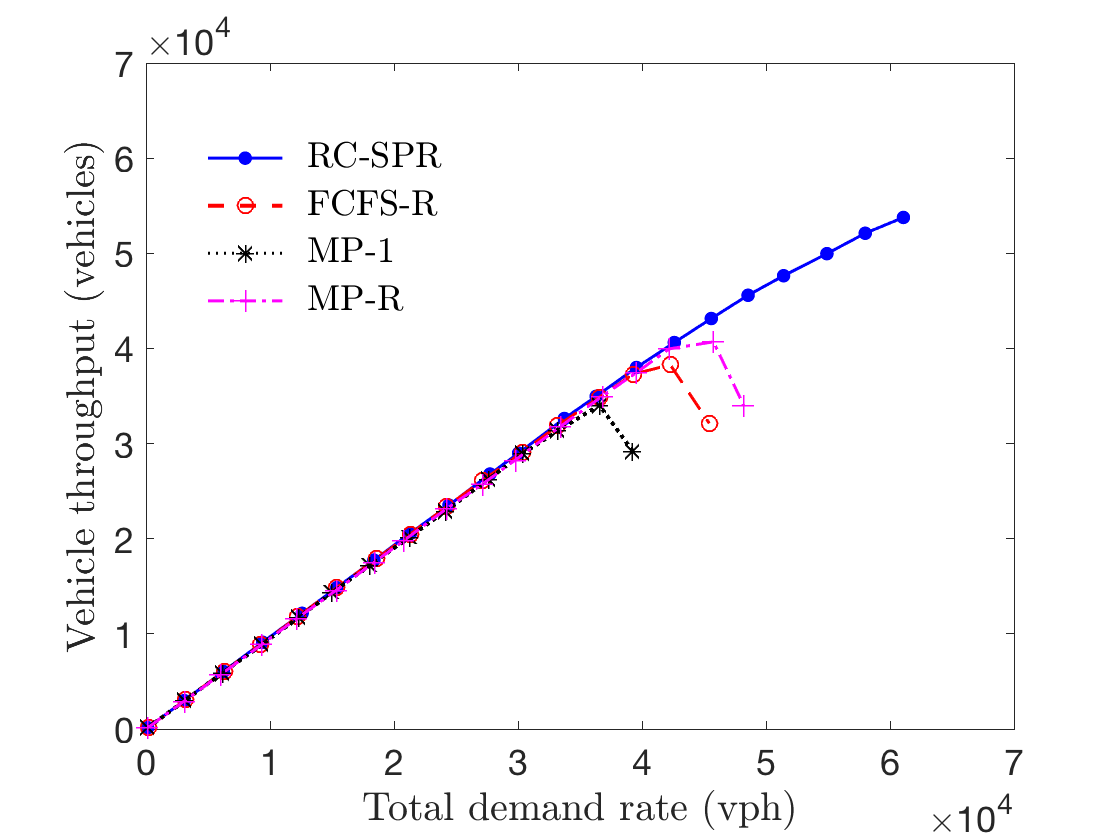}}
	\caption[]{Vehicle delay and throughput in comparative tests}
	\label{ComTestResult_1}
\end{figure}

\begin{itemize}
    \item In general, RC with AA-SPR exhibits a low average vehicle delay. In low-demand cases, the RC shows a delay of approximately $5 \ s$, only slightly higher than FCFS control. In high-demand cases, the vehicle delay of both MP controls and FCFS control sharply rise to an unacceptable level, whereas in RC, the rise is relatively mild. This is because in MP and FCFS, the network traffic under high demands starts to form gridlocks (see Long et al.(2011) for similar observations), whereas in RC, the network traffic is always efficiently organized. Moreover, from Table \ref{StdD} we can also observe that in most cases the RC with AA-SPR achieves a low standard deviation of vehicle delay, and the advantage is rather distinct under heavy traffic demands, as well.

    \item On the other hand, in view of vehicle throughput, we observe that RC holds some distinctive advantages over the benchmarks. Due to the possibility of forming gridlocks, benchmark controls show a "throughput drop" phenomenon when the demand is approaching a certain level, while in RC the throughput keeps increasing with the demand (with a gradually slowing-down trend in heavy traffic cases). It demonstrates that RC can well handle overloaded traffic. Nevertheless, such capability is acquired by "transferring" the vehicle queues to entrances and junctions, so implementing the control requires some supplementary mechanisms to manage the traffic at junctions or outside the network.

\end{itemize}

\begin{table}[!ht]
  \centering
  \caption{Standard deviation of vehicle delay under different controls}
  \label{StdD}
  \footnotesize
  \begin{tabular}{ c | c  c  c | c  c  c | c  c  c }
   \hline
   Demand rate & & Scenario 1 &  &  & Scenario 2 &  &  & Scenario 3 & \\
    (vph) & RC-SPR & MP-R & FCFS-R & RC-SPR & MP-R & FCFS-R & RC-SPR & MP-R & FCFS-R \\
     & (sec) & (sec) & (sec) & (sec) & (sec) & (sec) & (sec) & (sec) & (sec) \\
		\hline
		10,000	& 2.9 & 8.2 & 0.1 & 2.9 & 9.3 & 0.1 & 2.9 & 8.9 & 0.0 \\
		20,000   & 2.9 & 8.3 & 0.3 & 2.9 & 9.7 & 0.2 & 2.9 & 9.6 & 0.3 \\
		30,000   & 2.9 & 8.4 & 1.7 & 2.9 & 10.3 & 1.1 & 3.1 & 10.5 & 2.2 \\
		40,000   & 3.2 & 8.3 & $>$100 & 4.3 & 24.0 & $>$100 & 18.4 & 40.7 & 67.1 \\
		50,000  & 4.9 & 8.5 & $>$100 & 78.5 & $>$100 & $>$100 & $>$100 & $>$100 & $>$100 \\
   \hline
  \end{tabular}
\end{table}

Lastly, we show some results indicating the advantageous performance of one-way network in handling heavy traffic. We conduct a comparative study with MP control on a two-way grid network. The two-way grid network also has a size of $6 \times 6$, and all roads own one lane in each direction. The intersection on the two-way network contains through, right-turn, and left-turn movements on all legs, and the sum of saturation flow rates on all approaching lanes in an intersection equals that of the crossroads on the one-way grid network. The average travel time includes the average vehicle delay and the time for vehicles to traverse links on the network. Comparison results are shown in Figure \ref{Oneway_twoway_pic}, where the demand pattern is set similarly to Scenario 1. As illustrated, both the RC and the MP control on the one-way grid network exhibit a lower average delay than the MP control on the two-way grid network. This is attributed to the fact that the intersections of one-way streets own fewer conflict points and simpler streamlines than those of two-way streets. With regard to the average travel time, we observe that when the demand is low, the average travel time on the two-way grid network is smaller than that on the one-way grid network under MP control, and the situation reverses with an increase of demand magnitude. This phenomenon clearly illustrates the trade-off between the delay reduction and the additional detour brought by the one-way road networks, and it suggests that one-way roads can produce more desirable performance in areas with high traffic demands.

\begin{figure}[!ht]
	\centering
	\subfloat[][Vehicle delay]{\includegraphics[width=0.45\textwidth]{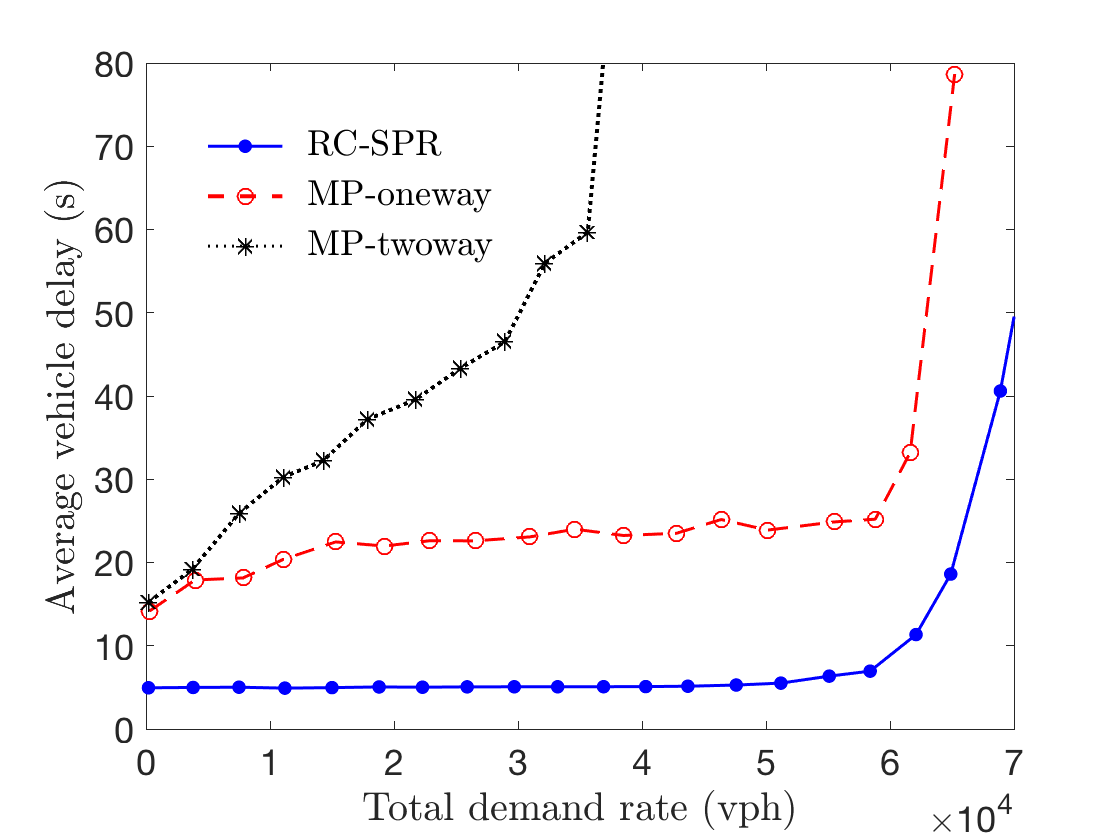}} \hspace{1em}
	\subfloat[][Travel time ]{\includegraphics[width=0.45\textwidth]{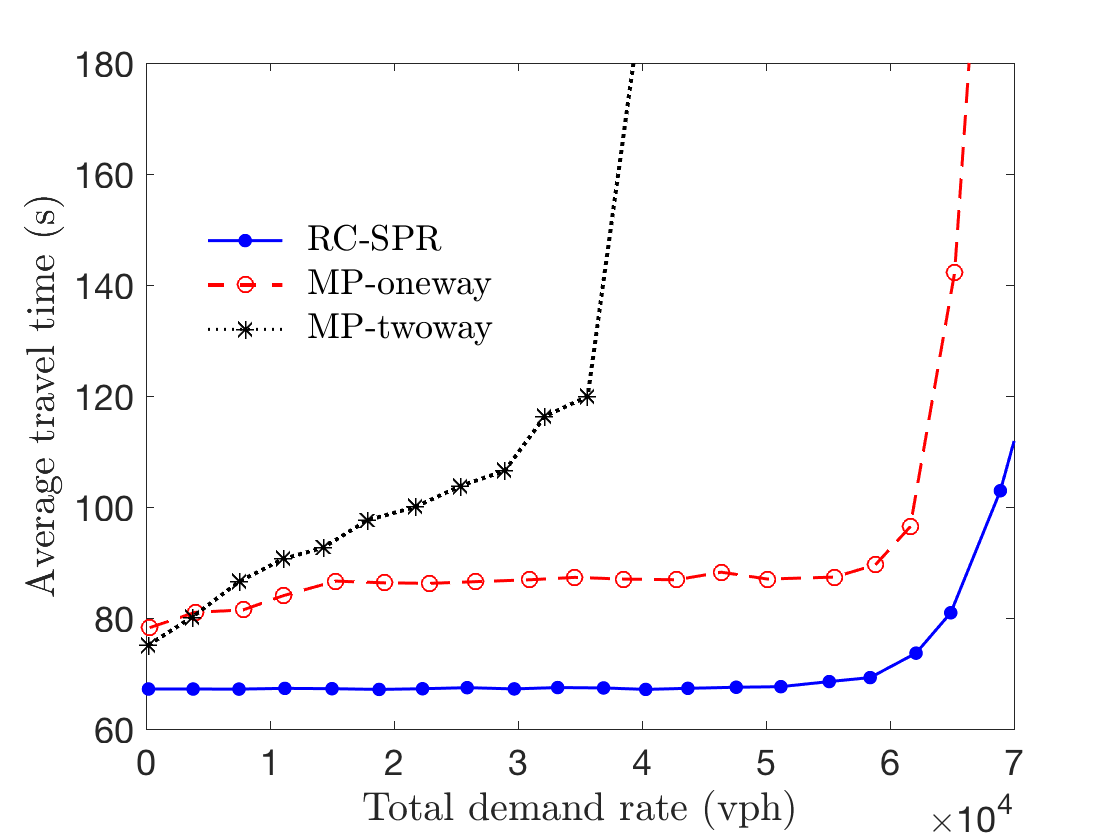}}
	\caption[]{Comparison between performances on one-way and two-way networks}
	\label{Oneway_twoway_pic}
\end{figure}

\section{Conclusions} \label{Conclusion_sec}

This study proposes an innovative and real-time rhythmic control for CAVs on grid networks, accompanied with the Part I paper (Chen et al., 2020) focusing on isolated intersections. The RC is a two-level framework. In the lower level, the entire network is endowed with a predetermined \textit{network rhythm}, and all CAVs are formed into virtual platoons that are admitted to enter the network by following the network rhythm. With the network rhythm, the CAVs experience no stop-and-go on the network, and a collision-free requirement is guaranteed by the pre-arranged platoon movements. In the upper level, and based on a pre-set network rhythm, we propose two online routing protocols, i.e., SPR and MPR, and solve them with LP-based approximation algorithms. We further provide a sufficient condition that the LP relaxation of the shortest-path online routing model yields an optimal integer solution. Some extensions are discussed for \textit{network rhythm} length choice, handling imbalanced demands, temporary within-network waiting and heterogeneous block sizes. Numerical tests are conducted to test the performances of proposed network control framework under various demand patterns, and comparative studies with MP control and FCFS control with adaptive routing are implemented to show the superiority of proposed control paradigm.\par

To the best of our knowledge, this study is among the first attempts to explore network-level control on dedicated road systems for CAVs. There are extensive further investigations following the line of this framework. First, careful exploration is required to generalize the RC into a general road network, e.g., the non-grid cases and the low-conflict network lane configuration (Eichler et al., 2013); the latter could also be suitable to implement the RC scheme. Second, the theoretical performance of the RC under stationary and non-stationary traffic deserves comprehensive investigation (e.g., the admissible demands of the control, and a more rigorous explanation for the superior performances of LP relaxation to solve the routing problems); a theoretical result can not only strengthen the practical value of the proposed control paradigm, but also provide guidance for the design of some control parameters, such as the intensity of the rhythm and the capacity of virtual platoons. Third, some real-time elements can be incorporated into the RC framework for enhancing the flexibility, such as a time-variant network rhythm and dynamically-changing platoon sizes. Fourth, since RC scheme may cause vehicle queuing at entrances and junctions, it requires associated traffic control at those places to maintain the traffic efficiencies (e.g., to manage the departure of vehicles within a parking garage). Lastly, in practical implementations, there are always some unexpected control errors and vehicle dysfunctions that may threaten the safe operation of RC. Besides reserving some safety buffers in the control, we need to design backup control protocols for handling emergencies.

\section*{Acknowledgements}

This research is supported partially by grants from National Natural Science Foundation of China (71871126, 51622807). Yafeng Yin thanks the support from the National Science Foundation (CNS-1837245 and CMMI-1904575).

\section*{References}
\noindent Balev, S., Yanev, N., Fr´eville, A., and Andonov, R. (2008). A dynamic programming based reduction
procedure for the multidimensional 0–1 knapsack problem. European Journal of Operational
Research, 186(1):63–76.\par
\noindent Bertsimas, D., Jaillet, P., and Martin, S. (2019). Online vehicle routing: The edge of optimization
in large-scale applications. Operations Research, 67(1):143–162.\par
\noindent Boyles, S. D., Rambha, T., and Xie, C. (2014). Equilibrium analysis of low-conflict network designs.
Transportation Research Record, 2467(1):129–139.\par
\noindent Chai, H., Zhang, H. M., Ghosal, D., and Chuah, C.-N. (2017). Dynamic traffic routing in a network
with adaptive signal control. Transportation Research Part C: Emerging Technologies, 85:64–85.\par
\noindent Chen, X., Li, M., Lin, X., Yin, Y., and He, F. (2020). Rhythmic control of automated traffic - part i:
Concept and properties and isolated intersections. Transportation Science, Submitted.\par
\noindent Dijkstra, E. W. (1959). A note on two problems in connexion with graphs. Numerische mathematik,
1(1):269–271.\par
\noindent Dresner, K. and Stone, P. (2008). A multiagent approach to autonomous intersection management.
Journal of artificial intelligence research, 31:591–656.\par
\noindent Eichler, D., Bar-Gera, H., and Blachman, M. (2013). Vortex-based zero-conflict design of urban
road networks. Networks and Spatial Economics, 13(3):229–254.\par
\noindent Gartner, N. H., Little, J. D., and Gabbay, H. (1975). Optimization of traffic signal settings by
mixed-integer linear programming: Part i: The network coordination problem. Transportation
Science, 9(4):321–343.\par
\noindent Hausknecht, M., Au, T.-C., and Stone, P. (2011). Autonomous intersection management: Multiintersection
optimization. In 2011 IEEE/RSJ International Conference on Intelligent Robots and
Systems, pages 4581–4586. IEEE.\par
\noindent Levin, M. W., Fritz, H., and Boyles, S. D. (2016). On optimizing reservation-based intersection
controls. IEEE Transactions on Intelligent Transportation Systems, 18(3):505–515.\par
\noindent Levin, M. W. and Rey, D. (2017). Conflict-point formulation of intersection control for autonomous
vehicles. Transportation Research Part C: Emerging Technologies, 85:528–547.\par
\noindent Li, R., Liu, X., and Nie, Y. M. (2018). Managing partially automated network traffic flow: Efficiency
vs. stability. Transportation Research Part B: Methodological, 114:300–324.\par
\noindent Long, J., Gao, Z., Zhao, X., Lian, A., and Orenstein, P. (2011). Urban traffic jam simulation based
on the cell transmission model. Networks and Spatial Economics, 11(1):43–64.\par
\noindent Lu, G., Nie, Y. M., Liu, X., and Li, D. (2019). Trajectory-based traffic management inside an
autonomous vehicle zone. Transportation Research Part B: Methodological, 120:76–98.\par
\noindent Mahmassani, H. S. (2016). 50th anniversary invited article—autonomous vehicles and connected
vehicle systems: flow and operations considerations. Transportation Science, 50(4):1140–1162.\par
\noindent Muller, E. R., Carlson, R. C., and Kraus, W. (2016). Time optimal scheduling of automated
vehicle arrivals at urban intersections. In 2016 IEEE 19th International Conference on Intelligent
Transportation Systems (ITSC), pages 1174–1179. IEEE.\par
\noindent NHTSA (2013). Preliminary statement of policy concerning automated vehicles. Washington, DC,
pages 1–14.\par
\noindent Qi, W. and Shen, Z.-J. M. (2019). A smart-city scope of operations management. Production and
Operations Management, 28(2):393–406.\par
\noindent Soleimaniamiri, S. and Li, X. (2019). Scheduling of heterogeneous connected automated vehicles
at a general conflict area. Technical report.\par
\noindent Varaiya, P. (2013). Max pressure control of a network of signalized intersections. Transportation
Research Part C: Emerging Technologies, 36:177–195.\par
\noindent Wolsey, L. A. and Nemhauser, G. L. (2014). Integer and combinatorial optimization. John Wiley and
Sons.\par
\noindent Yan, H., He, F., Lin, X., Yu, J., Li, M., and Wang, Y. (2019). Network-level multiband signal
coordination scheme based on vehicle trajectory data. Transportation Research Part C: Emerging
Technologies, 107:266–286.\par
\noindent Yu, C., Feng, Y., Liu, H. X., Ma, W., and Yang, X. (2018). Integrated optimization of traffic
signals and vehicle trajectories at isolated urban intersections. Transportation Research Part B:
Methodological, 112:89–112.\par
\noindent Gartner, N. H., Little, J. D., and Gabbay, H. (1975). Optimization of traffic signal settings by
mixed-integer linear programming: Part i: The network coordination problem. Transportation
Science, 9(4):321–343.\par
\noindent Hausknecht, M., Au, T.-C., and Stone, P. (2011). Autonomous intersection management: Multiintersection
optimization. In 2011 IEEE/RSJ International Conference on Intelligent Robots and
Systems, pages 4581–4586. IEEE.\par
\noindent Levin, M. W., Fritz, H., and Boyles, S. D. (2016). On optimizing reservation-based intersection
controls. IEEE Transactions on Intelligent Transportation Systems, 18(3):505–515.\par
\noindent Levin, M. W. and Rey, D. (2017). Conflict-point formulation of intersection control for autonomous
vehicles. Transportation Research Part C: Emerging Technologies, 85:528–547.\par
\noindent Li, R., Liu, X., and Nie, Y. M. (2018). Managing partially automated network traffic flow: Efficiency
vs. stability. Transportation Research Part B: Methodological, 114:300–324.\par
\noindent Long, J., Gao, Z., Zhao, X., Lian, A., and Orenstein, P. (2011). Urban traffic jam simulation based
on the cell transmission model. Networks and Spatial Economics, 11(1):43–64.\par
\noindent Lu, G., Nie, Y. M., Liu, X., and Li, D. (2019). Trajectory-based traffic management inside an
autonomous vehicle zone. Transportation Research Part B: Methodological, 120:76–98.\par
\noindent Mahmassani, H. S. (2016). 50th anniversary invited article—autonomous vehicles and connected
vehicle systems: flow and operations considerations. Transportation Science, 50(4):1140–1162.\par
\noindent M¨ uller, E. R., Carlson, R. C., and Kraus, W. (2016). Time optimal scheduling of automated
vehicle arrivals at urban intersections. In 2016 IEEE 19th International Conference on Intelligent
Transportation Systems (ITSC), pages 1174–1179. IEEE.\par
\noindent NHTSA (2013). Preliminary statement of policy concerning automated vehicles. Washington, DC,
pages 1–14.\par
\noindent Qi, W. and Shen, Z.-J. M. (2019). A smart-city scope of operations management. Production and
Operations Management, 28(2):393–406.\par
\noindent Soleimaniamiri, S. and Li, X. (2019). Scheduling of heterogeneous connected automated vehicles
at a general conflict area. Technical report.\par
\noindent Varaiya, P. (2013). Max pressure control of a network of signalized intersections. Transportation
Research Part C: Emerging Technologies, 36:177–195.\par
\noindent Wolsey, L. A. and Nemhauser, G. L. (2014). Integer and combinatorial optimization. John Wiley and
Sons.\par
\noindent Yan, H., He, F., Lin, X., Yu, J., Li, M., and Wang, Y. (2019). Network-level multiband signal
coordination scheme based on vehicle trajectory data. Transportation Research Part C: Emerging
Technologies, 107:266–286.\par
\noindent Yu, C., Feng, Y., Liu, H. X., Ma, W., and Yang, X. (2018). Integrated optimization of traffic
signals and vehicle trajectories at isolated urban intersections. Transportation Research Part B:
Methodological, 112:89–112.

\newpage

\section*{Appendix A}
\setcounter{figure}{0}
\renewcommand\thefigure{A-\arabic{figure}}
\setcounter{equation}{0}
\renewcommand{\theequation}{A-\arabic{equation}}

\large \textbf{Proof of Proposition \ref{prop1}} \par \normalsize

\noindent From Figure \ref{A1_fig}(a), we observe that the proposed one-way grid network is surrounded by a counterclockwise loop; this loop can be verified to be valid as we stipulate that the nethermost horizontal street is rightward and the leftmost vertical street is downward, and $m$ and $n$ are both even numbers (so the upmost street is leftward and the rightmost street is upward).  Then, for a vehicle, originating from either an entrance or a junction and destining for either an exit or a junction, to complete its trip, it can first drive to the outer loop, then circulate on it, and finally leave it for its destination or the street connecting its destination. Graphical illustrations are provided in Figures (\ref{A1_fig})-(\ref{A4_fig}). \hfill $\square$

\begin{figure}[!ht]
	\centering
	\subfloat[][]{\includegraphics[width=0.3\textwidth]{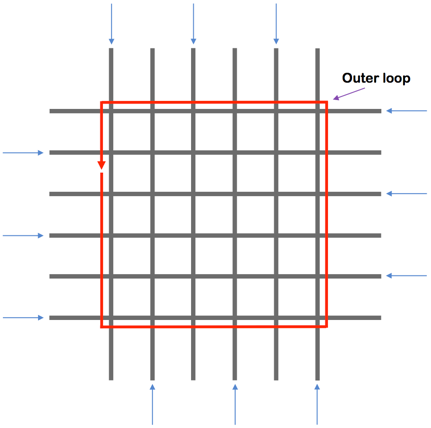}}\hspace{1em}
	\subfloat[][]{\includegraphics[width=0.3\textwidth]{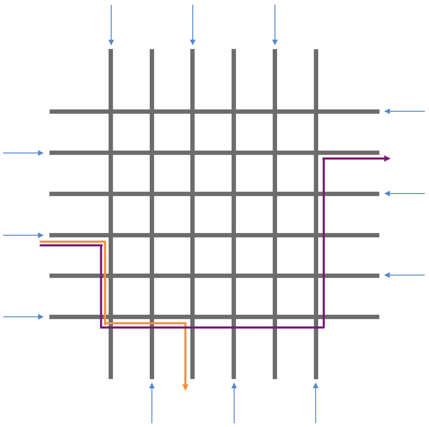}}
	\caption[]{Outer loop and entrance-to-exit routes. (a) Outer loop. (b) Typical entrance-to-exit trajectories.}
	\label{A1_fig}
\end{figure}

\begin{figure}[!ht]
	\centering
	\subfloat[][]{\includegraphics[width=0.3\textwidth]{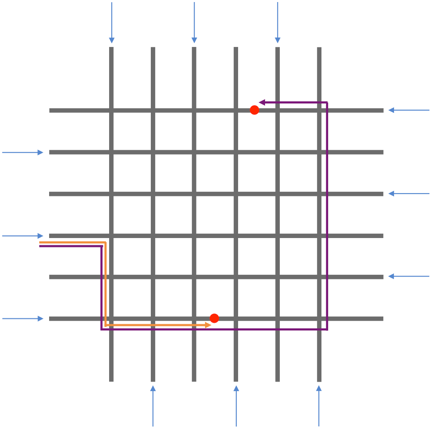}}\hspace{1em}
	\subfloat[][]{\includegraphics[width=0.3\textwidth]{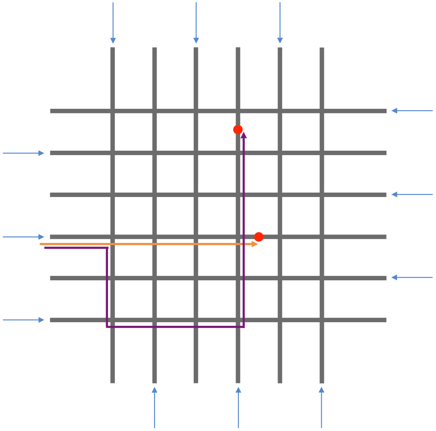}}
	\caption[]{Entrance-to-junction routes. (a) Junction on the outer loop. (b) Junction within the network.}
	\label{A2_fig}
\end{figure}

\begin{figure}[!ht]
	\centering
	\subfloat[][]{\includegraphics[width=0.3\textwidth]{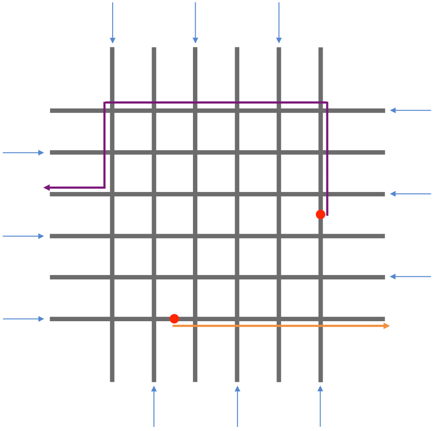}}\hspace{1em}
	\subfloat[][]{\includegraphics[width=0.3\textwidth]{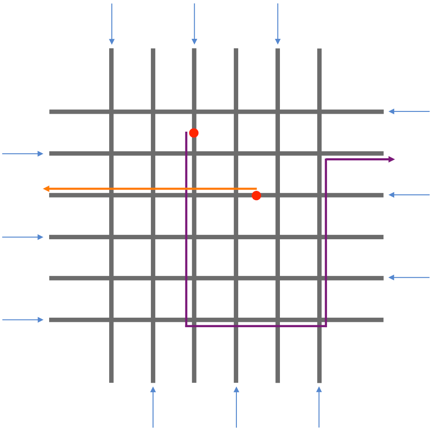}}
	\caption[]{Junction-to-exit routes. (a) Junction on the outer loop. (b) Junction within the network.}
	\label{A3_fig}
\end{figure}

\begin{figure}[!ht]
	\centering
	\includegraphics[width=0.3\textwidth]{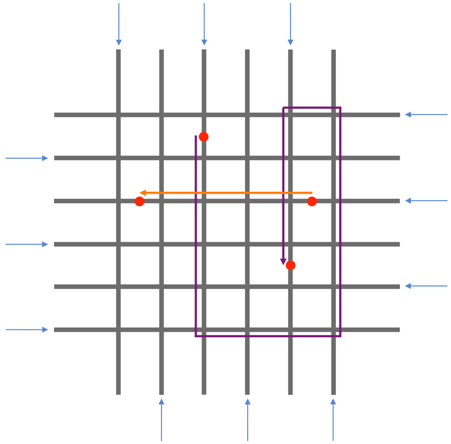}
	\caption[]{Junction-to-junction routes}
	\label{A4_fig}
\end{figure}

\par

\noindent
\large \textbf{Proof of Proposition \ref{prop3}} \par \normalsize

\noindent We prove the proposition by induction. By the unimodularity theorem, the LP-relaxation of SPR has an optimal integer solution if $\mathbf{C}_{SP}$ is totally unimodular, i.e., every square sub-matrix of $\mathbf{C}_{SP}$ is with a determinant of 1, 0 or -1 (Wolsey and Nemhauser, 2014). Hence, we only need to prove that, if $\mathbf{C}_{SP}$ is locally separable, then it is totally unimodular.\par

We first note that any square sub-matrix of $\mathbf{C}_{SP}$ with the size of 1 or 2 is always unimodular because the element is either 0 or 1. Now, assuming that any square sub-matrix of $\mathbf{C}_{SP}$ with the size less than or equal to $n_0$ is unimodular, then we prove that a square sub-matrix of $\mathbf{C}_{SP}$, denoted by $\mathbf{C}_{SP}^{n_0 + 1}$ with the size $n_0 + 1$ has a determinant of 1, 0 or -1. Consider the following two scenarios. 

\begin{itemize}
    \item i) $\mathbf{C}_{SP}^{n_0 + 1}$ has a row in Constraint (\ref{CapCons_SPR}). In such case, the associated row has only one element of 1, and then the determinant of $\mathbf{C}_{SP}^{n_0 + 1}$ can be computed by the cofactor of this element multiplied by 1 or -1. As the cofactor is the determinant of a unimodular sub-matrix, then it is 1, 0 or -1, indicating that the determinant of $\mathbf{C}_{SP}^{n_0 + 1}$ is also 1, 0 or -1. Therefore, $\mathbf{C}_{SP}^{n_0 + 1}$ is unimodular.

    \item ii) All rows in $\mathbf{C}_{SP}^{n_0 + 1}$ are in Constraint (\ref{FlowCons_SPR}). In such case, by the definition of locally separable property, if Condition (i) or (ii) is satisfied, then $\mathbf{C}_{SP}^{n_0 + 1}$ has two identical rows/columns. Hence, its determinant equals 0 because these two rows/columns are linearly dependent. If Condition (iii) holds, then there is a $k_0 \in \{ 1,2,...,n_0 \}$ such that there exist $k_0$ paths traversing at most $k_0$ temporal links, suggesting that all the elements associated with these $k_0$ paths and the remaining $n_0 + 1 - k_0$ temporal links are 0. In this context, we rearrange $\mathbf{C}_{SP}^{n_0 + 1}$ by putting the columns associated with these $k_0$ paths to the left side, and putting the rows associated with these $n_0 + 1 - k_0$ temporal links to the downside. This rearrangement yields a new matrix $\tilde{\mathbf{C}}_{SP}^{n_0 + 1}$ with the block representation as $\left[ \begin{matrix} \mathbf{C}_{k_0 \times k_0}^{11} & \mathbf{C}_{k_0 \times (n_0 + 1 - k_0)}^{12} \\ \mathbf{0}_{(n_0 + 1 - k_0) \times k_0} & \mathbf{C}_{(n_0 + 1 - k_0) \times (n_0 + 1 - k_0)}^{22} \end{matrix} \right]$. By the determinant of partitioned matrix, $\abs{\tilde{\mathbf{C}}_{SP}^{n_0 + 1}} = \abs{\mathbf{C}_{k_0 \times k_0}^{11}} \times \abs{\mathbf{C}_{(n_0 + 1 - k_0) \times (n_0 + 1 - k_0)}^{22}}$. With the assumption, $\mathbf{C}_{k_0 \times k_0}^{11}$ and $\mathbf{C}_{(n_0 + 1 - k_0) \times (n_0 + 1 - k_0)}^{22}$ are both unimodular. Then, the product of their determinants is 1, 0 or -1, implying that $\tilde{\mathbf{C}}_{SP}^{n_0 + 1}$ is unimodular. Furthermore, as the column and row maneuver of a matrix only affects the sign of its determinant, we then know $\abs{\mathbf{C}_{SP}^{n_0 + 1}}$ is 1, 0 or -1. Therefore, $\mathbf{C}_{SP}^{n_0 + 1}$ is also unimodular.
    
\end{itemize}
\par
With above discussion, we know that all square sub-matrix of size $n_0 + 1$ is unimodular. By induction we then conclude that $\mathbf{C}_{SP}$ is totally unimodular, and the LP-relaxation of SPR then exhibits an optimal integral solution. Proof completes. \hfill $\square$ \\

\noindent
\large \textbf{Proof of Proposition \ref{prop4}} \par \normalsize

\noindent We use construction to prove Proposition \ref{prop4}. Assume that the vehicle speed at crossroads $i-1$ is $v_{i-1}$, and $v_{min}^c \le v_{i-1} \le v_{max}$. We also use the notation $a_i^*$ defined in Step 2 of SCG. Now we define the following speed curve and the associated travel distance.

\begin{align}
& v(t;\theta) = \left \{ \begin{matrix} \max(-d_{max}t + v_{i-1}, \theta, c_{max}t + v_{min}^c - c_{max}a_i^*\hat{t}), 0 \le \theta \le v_{i-1} \\ \min(c_{max}t + v_{i-1}, \theta), v_{i-1} < \theta \le v_{max} \end{matrix} \right. \label{Appendix_1} \\
& D(\theta) = \int_{0}^{a_i^*\hat{t}} v(t;\theta)dt \label{Appendix_2}
\end{align}
\par

\noindent where $0 \le t \le a_i^*\hat{t}$. Then, we prove that for any $l_i \ge \frac{v_{max}^2}{2d_{max}} + \frac{(v_{min}^c)^2}{2c_{max}}$, there exists a $\theta^* \in [0, v_{max}]$ such that $D(\theta^*) = l_i$. To prove it, we first validate the following results. \par

\noindent i) $D(0)\le l_i$ and $D(v_{max})\ge l_i$. By definition, we have $D(0) = \frac{v_{i-1}^2}{2d_{max}} + \frac{(v_{min}^c)^2}{2c_{max}} \le \frac{v_{max}^2}{2d_{max}} + \frac{(v_{min}^c)^2}{2c_{max}} \le l_i$; and $D(v_{max}) = l_{max}^i(a_i^*) \ge l_i$. \par

\noindent ii) $D(\theta)$ is continuous with $\theta$ on $[0, v_{max}]$. When $0 \le \theta \le v_{i-1}$, we have: 

\begin{align*}
& \lim_{\Delta \theta \rightarrow 0} \abs{\frac{D(\theta + \Delta \theta) - D(\theta)}{\Delta \theta}} = \lim_{\Delta \theta \rightarrow 0} \frac{1}{\abs{\Delta \theta}} \int_{0}^{a_i^*\hat{t}} \abs{v(t;\theta + \Delta \theta) - v(t;\theta)}dt \\
& \qquad \qquad \qquad \qquad \qquad \quad \le \lim_{\Delta \theta \rightarrow 0} \frac{1}{\abs{\Delta \theta}} \int_{0}^{a_i^*\hat{t}} \abs{\Delta \theta}dt \\
& \qquad \qquad \qquad \qquad \qquad \quad = a_i^*\hat{t}
\end{align*}
\par

Then $D(\theta)$ is continuous on $[0, v_{i-1}]$. With a similar derivation, we can prove that $D(\theta)$ is continuous on $(v_{i-1}, v_{max}]$. Next, we show that $D(\theta)$ is continuous at $v_{i-1}$. When $v_{i-1}<v_{max}$, for $\Delta \theta > 0$, we have:

\begin{align*}
& D(v_{i-1} + \Delta \theta) = \int_{0}^{a_i^*\hat{t}} \min(c_{max}t + v_{i-1}, \theta) dt \le \int_{0}^{a_i^*\hat{t}} (v_{i-1} + \Delta \theta)dt = a_i^*\hat{t}(v_{i-1} + \Delta \theta) \\
& D(v_{i-1} + \Delta \theta) = \int_{0}^{a_i^*\hat{t}} \min(c_{max}t + v_{i-1}, \theta) dt \ge \int_{0}^{a_i^*\hat{t}} v_{i-1} dt = a_i^*\hat{t}v_{i-1}
\end{align*}
\par

Thus we have $\lim_{\Delta \theta \rightarrow 0^+} D(v_{i-1} + \Delta \theta) = a_i^*\hat{t}v_{i-1}$. Similarly, we have $\lim_{\Delta \theta \rightarrow 0^-} D(v_{i-1} + \Delta \theta) = a_i^*\hat{t}v_{i-1}$. Therefore, we conclude that $D(\theta)$ is continuous with $\theta$ on $[0, v_{max}]$.\par

With above two results, we conclude that there must exist $\theta^* \in [0, v_{max}]$ such that $D(\theta^*) = l_i$. Graphical illustrations of the above construction of speed curves are shown in Figure \ref{SCG_fig}. Proof completes. \hfill $\square$

\begin{figure}[!ht]
	\centering
	\subfloat[][]{\includegraphics[width=0.3\textwidth]{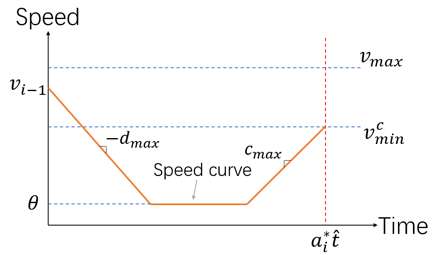}}\hspace{0.2em}
	\subfloat[][]{\includegraphics[width=0.3\textwidth]{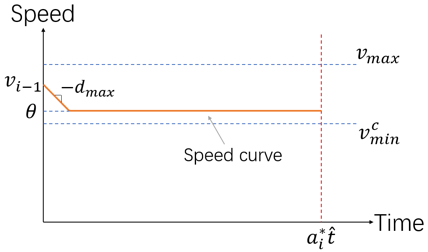}}\hspace{0.2em}
	\subfloat[][]{\includegraphics[width=0.3\textwidth]{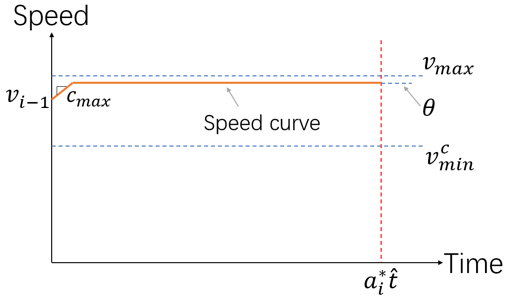}}
	\caption[]{Eligible speed curves. (a) $0 \le \theta \le v_{min}^c$ (b) $v_{min}^c \le \theta \le v_{i-1}$ (c) $v_{i-1} \le \theta \le v_{max}$}
	\label{SCG_fig}
\end{figure}

\newpage

\setcounter{figure}{0}
\renewcommand\thefigure{B-\arabic{figure}}
\setcounter{equation}{0}
\renewcommand{\theequation}{B-\arabic{equation}}
\setcounter{table}{0}
\renewcommand{\thetable}{B-\arabic{table}}


\section*{Appendix B}

\subsection*{Computational procedure for the existence probability of interconnection loops}

From Table \ref{Loop_table} we see that when the network size is no less than $6 \times 6$, the number of 3-path combination is larger than $10^{10}$; this suggests that verifying the number of loops by enumerating all combinations is intractable with a personal computer. To resolve this issue, we develop the following procedure that reduces the computational complexity. \par

\noindent \textbf{Step 1}. For each O-D pair, use Dijkstra's algorithm (Dijkstra, 1959) to extract the shortest path, and record the path along with the temporal links $(a,e)$ it passes through. \\
\textbf{Step 2}. Extract all temporal link pairs $\{ (a_1,e_1), (a_2,e_2)\}$ that are traversed by at least one path ($e_1 < e_2$); the set of these temporal link pairs is denoted by $\mathcal{A}_t$. \\
\textbf{Step 3}. For each pair in $\mathcal{A}_t$, identify all other temporal links $(a_3,e_3)$ such that $\{ (a_1,e_1), (a_3,e_3)\} \in \mathcal{A}_t$ and $\{ (a_2,e_2), (a_3,e_3)\} \in \mathcal{A}_t$, and for each $\{ (a_1,e_1), (a_2,e_2), (a_3,e_3) \}$ remove the paths passing through all these three temporal links. Then, the remaining paths associated with these three temporal link pairs constitute interconnection loops. Record all these loops and then calculate the probability of their existence. \par

Compared to the number of 3-path combinations, the number of temporal link pairs is greatly lessened. For example, on the $10 \times 10$ network, the number of 3-path combination is more than $10^{13}$, while the number of temporal link pairs is only $1.93 \times 10^5$. It takes approximately three CPU hours to extract all loops on the $10 \times 10$ network.

\subsection*{Monte-Carlo simulation for the integrality of AA-SPR}

To numerically illustrate the solution quality of AA-SPR, we design a Monte-Carlo simulation on a $6 \times 6$ one-way street network, where each road segment between two consecutive crossroads is with a junction, and the stochasticity is specified as follows. \par

\begin{itemize}
    \item The capacity for temporal link $(a,e)$ is set as $\lfloor \hat{\xi}_1 \xi_{(a,e)} \rfloor + \chi_{(a,e)}$, where $\hat{\xi}_1$ is a global random variable that obeys the uniform distribution on interval $[0,1]$, $\xi_{(a,e)}$ is a local random variable obeying the uniform distribution on interval $[0,16]$, $\chi_{(a,e)}$ is a binary random variable with the probability of 0.5 being 0 and the probability of 0.5 being 1, and $\lfloor \cdot \rfloor$ is the floor operator.
    
    \item The demand for O-D pair $w \in \mathcal{W}$ is set as $\lfloor \hat{\xi}_2 \xi_{w} \rfloor + \chi_{w}$, where $\hat{\xi}_2$ is a global random variable that obeys the uniform distribution on interval $[0,1]$, $\xi_{w}$ is a local random variable obeying the uniform distribution on interval $[0,32]$, and $\chi_{w}$ is a binary random variable with the probability of 0.5 being 0 and the probability of 0.5 being 1.
    
    \item The delay penalty for O-D pair $w \in \mathcal{W}$, i.e., $\hat{c}^w$, is set as $\bar{\chi}_{w}\bar{\xi}_{w}$, where $\bar{\chi}_{w}$ is a binary random variable with the probability of $\hat{\xi}_3$ equal to 0 and the probability of $1 - \hat{\xi}_3$ equal to 1, and $\bar{\xi}_{w}$ obeys the uniform distribution on interval $[0,50]$.
    
\end{itemize}

With the above setting, we conduct the Monte-Carlo simulation for 10,000 times. In each simulation, we solve SPR with AA-SPR, and record the solution along with the gap, computed as $\frac{S_I - S_F}{S_I} \times 100\%$, where $S_F$ and $S_I$ are the objective function values of the fractional and integral solutions, respectively. The fractional solution is from the first-time LP-relaxation of SPR, and denotes a lower bound for SPR, whereas the integral solution is obtained after AA-SPR terminates, and represents an upper bound for SPR. 

\begin{table}[!ht]
  \centering
  \caption{Summary of AA-SPR testing results}
  \label{MonteCarlo}
  \small
  \begin{tabular}{ l  c }
   \hline
		Total testing times	& 10,000 \\
		Rate of one-time integral solution   & $99.86 \%$  \\
		Maximum gap   & $0.02 \%$  \\
   \hline
  \end{tabular}
\end{table}
\par

From Table \ref{MonteCarlo}, we observe that the rate of one-time integral solution, i.e., the probability that the LP-relaxation of SPR directly provides an optimal integral solution, is $99.86\%$. This implies that for the majority of cases, AA-SPR terminates at the first iteration and provides the global optimal solution of SPR. For the remaining $0.14\%$ cases, we find that the gap between the objective function values of the solution to the LP-relaxation and the rounded integral solution is less than $0.02\%$.

\end{document}